
\documentclass[12pt]{article}
\usepackage{amsfonts, amsmath, amssymb, latexsym, eucal, amscd} 


\usepackage{amsthm}
\usepackage{amscd}

\usepackage{cite}


\newtheorem{definition}{Definition}[section]
\newtheorem{theorem}[definition]{Theorem}
\newtheorem{lemma}[definition]{Lemma}
\newtheorem{corollary}[definition]{Corollary}
\newtheorem{remark}[definition]{Remark}
\newtheorem{example}[definition]{Example}

\newtheorem{problem}[definition]{Problem}
\newtheorem{note}[definition]{Note}

\newtheorem{proposition}[definition]{Proposition}


\typeout{Substyle for letter-sized documents. Released 24 July 1992}


\setlength{\topmargin}{-1in}
\setlength{\headheight}{1.5cm}
\setlength{\headsep}{0.3cm}
\setlength{\textheight}{9in}
\setlength{\oddsidemargin}{0cm}
\setlength{\evensidemargin}{0cm}
\setlength{\textwidth}{6.5in}

\begin{document} 

\title{\bf The nucleus of a $Q$-polynomial 
distance-regular graph
}
\author{
Paul Terwilliger}
\date{}

\maketitle
\begin{abstract} 
Let $\Gamma$ denote a  $Q$-polynomial distance-regular graph with diameter $D\geq 1$. For a vertex $x$ of $\Gamma$
the corresponding subconstituent algebra $T=T(x)$ is generated by the adjacency matrix $A$ of $\Gamma$ and the
dual adjacency matrix $A^*=A^*(x)$ of $\Gamma$ with respect to $x$.
We introduce a $T$-module $\mathcal N = \mathcal N(x)$ called the nucleus of $\Gamma$ with respect to $x$.
We  describe $\mathcal N$ from various points of view. We show  that all the irreducible $T$-submodules of $\mathcal N$
are thin.  Under the assumption that $\Gamma$ is a nonbipartite dual polar graph,
we give an explicit basis for $\mathcal N$ and the action of $A, A^*$ on this basis. The basis is in bijection with
the set of elements for the projective geometry $L_D(q)$, where $GF(q)$ is the finite field used to define 
$\Gamma$.

\medskip

\noindent
{\bf Keywords}. Dual polar graph; projective geometry; $Q$-polynomial property; subconstituent algebra.
\hfil\break
\noindent {\bf 2020 Mathematics Subject Classification}.
Primary: 05E30. Secondary: 05C50, 06C05.
 \end{abstract}
 
\section{Introduction} This paper is about the subconstituent algebras of a $Q$-polynomial distance-regular graph. Surveys about this general topic
can be found in \cite[Chapter 6]{Nbbit}, \cite{Ndkt,Nint}. 
\medskip

\noindent Before we explain our purpose, let us briefly recall the subconstituent algebra concept (formal definitions start  in Section 2).
Let $\Gamma$ denote a  distance-regular graph, with vertex set $X$, path-length distance function $\partial$, and diameter $D\geq 1$.
Let $A \in {\rm Mat}_X(\mathbb C)$ denote the adjacency matrix of $\Gamma$.
 We assume that $\Gamma$ is $Q$-polynomial with respect to the ordering $\lbrace E_i\rbrace_{i=0}^D$ of the primitive idempotents of $A$.
 Fix a vertex $x \in X$. For $0 \leq i \leq D$ define the set $\Gamma_i(x)= \lbrace y \in X \vert \partial(x,y)=i\rbrace$. This set is often
 called the $i$th subconstituent of $\Gamma$ with respect to $x$. The corresponding dual primitive idempotent is the diagonal matrix $E^*_i =E^*_i(x)$ in ${\rm Mat}_X(\mathbb C)$
that has the following entries. For $y \in X$ the  $(y,y)$-entry is $1$ if $y \in \Gamma_i(x)$, and $0$ if $y \not\in \Gamma_i(x)$.
The subconstituent algebra $T=T(x)$ is the subalgebra of ${\rm Mat}_X(\mathbb C)$ generated by $A$ and 
 a certain diagonal matrix $A^*=A^*(x)$ called the dual adjacency matrix of $\Gamma$ with respect to $x$.
The matrices $\lbrace E^*_i\rbrace_{i=0}^D$ are the primitive idempotents of $A^*$.
 By a $T$-module, we mean a $T$-submodule of the
standard module $V = \mathbb C^X$.
\medskip

\noindent In this paper, we introduce a $T$-module $\mathcal N = \mathcal N(x)$ called the nucleus of $\Gamma$ with respect to $x$.
We  describe $\mathcal N$ from various points of view. We show  that all the irreducible $T$-submodules of $\mathcal N$
have a property called thin.  Under the assumption that $\Gamma$ is a nonbipartite dual polar graph,
we give an explicit basis for $\mathcal N$ and the action of $A, A^*$ on this basis.
\medskip

\noindent  We will describe $\mathcal N$ in more detail, after a few definitions.
Let $W$ denote an irreducible $T$-module. By \cite[Corollary~5.7]{NsomeAlg} we have ${\rm dim}\, E^*_iW \leq 1$ $(0 \leq i \leq D)$ if and only if 
${\rm dim}\, E_iW \leq 1$ $(0 \leq i \leq D)$; whenever this happens we say that $W$ is thin.
By the  endpoint of $W$ we mean
\begin{align*}
{\rm min}\lbrace i \vert 0 \leq i \leq D, \; E^*_iW \not=0 \rbrace.
\end{align*}
\noindent By the diameter of $W$, we mean
\begin{align*}
\bigl \vert \lbrace i \vert 0 \leq i \leq D, \; E^*_iW \not=0 \rbrace  \bigr\vert -1.
\end{align*}
By the dual endpoint of $W$ we mean
\begin{align*}
{\rm min}\lbrace i \vert 0 \leq i \leq D, \; E_iW \not=0 \rbrace.
\end{align*}
\noindent By \cite[Corollary~3.3]{pasc}, the diameter of $W$ is equal to
\begin{align*}
\bigl \vert \lbrace i \vert 0 \leq i \leq D, \; E_iW \not=0 \rbrace  \bigr\vert -1.
\end{align*}
\noindent By the displacement of $W$ we mean  $r+t-D+d$, where $r$ (resp. $t$) (resp. $d$)
denotes the endpoint (resp. dual endpoint) (resp. diameter) of $W$. The displacement of $W$ is nonnegative \cite[Lemma~4.2]{ds}.
We show that the following are equivalent: 
\begin{enumerate}
\item[\rm (i)] the displacement of $W$ is zero; 
\item[\rm (ii)] 
 $r=t$ and $d=D-2r$.
 \end{enumerate}
 \noindent 
  We show that if (i), (ii) hold then $W$ is thin, and in this case $r$ determines $W$ up to isomorphism of $T$-modules. 
  \medskip
  
 \noindent 
We introduce a $T$-module called the nucleus of $\Gamma$ with respect to $x$. By definition, this $T$-module is the span of the irreducible $T$-modules that have displacement 0.
\medskip

\noindent We describe the nucleus as follows.
 For $0 \leq i \leq D$ define a subspace $\mathcal N_i = \mathcal N_i(x)$ of $V$ by
\begin{align*}
\mathcal N_i = (E^*_0V+E^*_1V+\cdots + E^*_iV) \cap (E_0V+E_1V+\cdots + E_{D-i}V).
\end{align*}
By \cite[Corollary~5.8, Theorem~6.2]{ds} the sum $\sum_{i=0}^D \mathcal N_i$ is direct; denote the sum by $\mathcal N = \mathcal N(x)$.
We show that the following are the same:
\begin{enumerate}
\item [\rm (i)] the subspace $\mathcal N=\mathcal N(x)$;
\item[\rm (ii)] the nucleus of $\Gamma$ with respect to $x$.
\end{enumerate}
\noindent  Next, we assume that $\Gamma$ is a nonbipartite dual polar graph  \cite[p.~352]{Nbbit}, \cite[p.~303]{Nbannai}, \cite[Example~6.1(6)]{tSub3}.
Using the vertex $x$, we define a binary relation $\sim$ on $X$ as follows. 
 For $y, z \in X$ we declare $y \sim z$
whenever both
\begin{enumerate}
\item[\rm (i)] $\partial(x,y) = \partial(x,z)$;
\item[\rm (ii)]  $y,z$ are in the same connected component of $\Gamma_i(x)$, where $i=\partial(x,y)=\partial(x,z)$.
\end{enumerate}
Note that $\sim$ is an equivalence relation.
We show that the nucleus $\mathcal N= \mathcal N(x)$ has a basis consisting of the
characteristic vectors of the $\sim$ equivalence classes.  We give the actions of $A, A^*$ on this basis.
\medskip

\noindent 
By the definition of a dual polar graph, the  vertices of $\Gamma$ are certain $D$-dimensional subspaces of a vector space
over a  finite field $GF(q)$. In particular, the vertex $x$ is a $D$-dimensional vector space over $GF(q)$.
 Let the set $\mathcal P$ consist of the subspaces of $x$. The set $\mathcal P$, together with the inclusion
relation, is a poset called the projective geometry $L_D(q)$ \cite[p.~439]{Nbcn}, \cite[Chapter~1]{Ncameron}. We give a bijection from $\mathcal P$
to the above basis for $\mathcal N$. Via the bijection, the action of $A$ on $\mathcal N$ becomes a weighted adjacency map for
$L_D(q)$ in the sense of \cite[Section~1]{LNq2}. This weighted adjacency map  for $L_D(q)$ is a variation on the one given in \cite[Definition~7.1]{LNq2}.
We would like to acknowledge that a similar weighted adjacency map for $L_D(q)$ showed up earlier in the work of 
Bernard, Cramp{\'e}, and Vinet
\cite[Theorem~7.1]{PAB} concerning the dual polar graph with symplectic type and $q$ a prime.
\medskip

\noindent 
We just mentioned a bijection from $\mathcal P$
to a basis for $\mathcal N$. The bijection shows that the cardinality of $\mathcal P$ is equal to the dimension of $\mathcal N$,
which must be a sum
of $q$-binomial coefficients:
\begin{align*}
{\rm dim}\, \mathcal N = \sum_{i=0}^D \binom{D}{i}_q.
\end{align*}

\noindent  Some our main results are applications of  the theory of tridiagonal pairs \cite{NsomeAlg}. In the main body of the paper, our first goal is to
 develop this theory and explain how it relates to $Q$-polynomial distance-regular graphs.
\medskip

\noindent The paper is organized as follows. Section 2 contains some preliminaries.
Section 3 is about tridiagonal pairs. 
In Section 4, we review the subconstituent algebra and the $Q$-polynomial property for distance-regular graphs.
In Section 5, we explain how tridiagonal pairs are related to $Q$-polynomial distance-regular graphs.
In Sections 6, 7 we introduce the nucleus and describe it in several ways.
In Section 8, we describe the nucleus for several elementary examples.
In Section 9 we recall the dual polar graphs.
In Sections 10--13, we describe the nucleus for a nonbipartite dual polar graph.
Section 14 contains some open problems concerning the nucleus.
\medskip

\noindent The main results of this paper are 
Propositions \ref{lem:Wdata}, \ref{lem:nucISO} and
Theorems \ref{thm:ND}, \ref{thm:DPN},  \ref{thm:bij}, \ref{thm:Pmain}.

\section{Preliminaries} 
We now begin our formal argument. The following concepts and notation will be used throughout the paper.
Every algebra that we discuss, is understood to be associative and have a multiplicative identity.
A subalgebra has the same multiplicative identity as the parent algebra.
Let $\mathbb F$ denote a field, and let $V$ denote a  vector space over $\mathbb F$  that has finite positive dimension.
Let ${\rm End}(V)$ denote the $\mathbb F$-algebra consisting of the $\mathbb F$-linear maps from $V$ to $V$.
For $A \in {\rm End}(V)$ let  $\langle A \rangle$ denote the subalgebra of ${\rm End}(V)$ generated by $A$.
The map $A$ is said to be {\it diagonalizable} whenever $V$ is spanned by the eigenspaces of $A$. Assume for the moment
that $A$ is diagonalizable, and let $U$ denote an eigenspace of $A$. The corresponding {\it primitive idempotent} of $A$ is the
map in ${\rm End}(V)$ that acts as the identity on $U$ and as zero on every other eigenspace of $A$. By linear algebra,
the primitive idempotents of $A$ form a basis for $\langle A \rangle$; see for example \cite[Section~2]{altDRG}.

\section{Tridiagonal pairs}

\noindent In this section, we describe some results about tridiagonal pairs. Later in the paper, we will apply these results
to $Q$-polynomial distance-regular graphs. Background information about tridiagonal pairs can be found in  \cite{itoOq, IKT, tdanduq, TDqRac, augIto, LSnotes,vidunas}. 
\medskip

\noindent Throughout this section, $V$ denotes a  vector space over $\mathbb F$  that has finite positive dimension.

\begin{definition} \label{def:TDP} \rm (See \cite[Definition~1.1]{NsomeAlg}.)
By a {\it tridiagonal pair on $V$}, we mean an ordered pair $A, A^*$ of elements in ${\rm End}(V)$ that 
satisfy the following conditions {\rm (i)--(iv)}:
\begin{enumerate}
\item[\rm (i)] each of $A, A^*$ is diagonalizable;
\item[\rm (ii)] there exists an ordering $\lbrace V_i \rbrace_{i=0}^d$ of the eigenspaces of $A$ such that
\begin{align}
          A^* V_i \subseteq V_{i-1} + V_i + V_{i+1} \qquad \quad (0 \leq i \leq d), \label{eq:sum1}
\end{align}
\noindent where $V_{-1}=0$ and $V_{d+1} =0$;
\item[\rm (iii)] there exists an ordering $\lbrace V^*_i \rbrace_{i=0}^{\delta}$ of the eigenspaces of $A^*$ such that
\begin{align}
          A V^*_i \subseteq V^*_{i-1} + V^*_i + V^*_{i+1} \qquad \quad (0 \leq i \leq \delta), \label{eq:sum2}
\end{align}
\noindent where $V^*_{-1}=0$ and $V^*_{\delta+1} =0$;
\item[\rm (iv)] there does not exist a subspace $W \subseteq V$ such that $A W \subseteq W$, $A^* W \subseteq W$, $W \not=0$, $W\not=V$.
\end{enumerate}
\end{definition}

\begin{note} \rm According to a common notational convention, $A^*$ denotes the conjugate-transpose of $A$. We are not following this convention. For a tridiagonal
pair $A, A^*$ the $\mathbb F$-linear maps $A$ and $A^*$ are arbitrary subject to (i)--(iv) above.
\end{note}

\noindent Let $A, A^*$ denote a tridiagonal pair on $V$, as in Definition \ref{def:TDP}. By \cite[Lemma~4.5]{NsomeAlg} the integers $d$ and $\delta$ from \eqref{eq:sum1} and \eqref{eq:sum2}
are equal; we call this common value the {\it diameter} of the  pair. An ordering $\lbrace V_i \rbrace_{i=0}^d$ of the eigenspaces of $A$ is called {\it standard}
whenever it satisfies \eqref{eq:sum1}. We comment on the uniqueness of the standard ordering. Assume that the ordering $\lbrace V_i \rbrace_{i=0}^d$
is standard. By \cite[Lemma~2.4]{NsomeAlg}, the inverted ordering $\lbrace V_{d-i}\rbrace_{i=0}^d$ is standard and no further ordering is standard. An ordering of the primitive idempotents
of $A$ is called {\it standard} whenever the corresponding ordering of the eigenspaces of $A$ is standard. A similar discussion applies to $A^*$.
\medskip

\noindent  As we investigate a tridiagonal pair, it is convenient to use the concept of a tridiagonal system.  This concept is defined as follows.

\begin{definition}\label{def:TDS} \rm (See \cite[Definition~2.1]{NsomeAlg}, \cite[Definition~2.1]{sharp}.) By a {\it tridiagonal system on $V$}, we mean a sequence
\begin{align*}
\Phi = (A; \lbrace E_i \rbrace_{i=0}^d; A^*; \lbrace E^*_i \rbrace_{i=0}^d )
\end{align*}
that satisfies the following conditions (i)--(iii):
\begin{enumerate}
\item[\rm (i)]  $A, A^*$ is a tridiagonal pair on $V$;
\item[\rm (ii)]  $\lbrace E_i \rbrace_{i=0}^d$ is a standard ordering of the primitive idempotents of $A$;
\item[\rm (iii)] $\lbrace E^*_i \rbrace_{i=0}^d$ is a standard ordering of the primitive idempotents of $A^*$.
\end{enumerate}
We call $d$ the {\it diameter of $\Phi$}.
\end{definition}

\noindent  Next, we give some basic facts about tridiagonal systems.
Let $\Phi = (A; \lbrace E_i \rbrace_{i=0}^d; A^*; \lbrace E^*_i \rbrace_{i=0}^d )$ denote a tridiagonal system on $V$. 
Then the following are tridiagonal systems on $V$:
\begin{align*}
\Phi^\downarrow &=  (A; \lbrace E_i \rbrace_{i=0}^d; A^*; \lbrace E^*_{d-i} \rbrace_{i=0}^d ); \\
\Phi^\Downarrow &=  (A; \lbrace E_{d-i} \rbrace_{i=0}^d; A^*; \lbrace E^*_i \rbrace_{i=0}^d ); \\
\Phi^* &=  (A^*; \lbrace E^*_i \rbrace_{i=0}^d; A; \lbrace E_i \rbrace_{i=0}^d ).
\end{align*}

\noindent 
Let $\Phi = (A; \lbrace E_i \rbrace_{i=0}^d; A^*; \lbrace E^*_i \rbrace_{i=0}^d )$ denote a tridiagonal system on $V$. 
By \cite[Lemma~2.4]{NsomeAlg} and \cite[Lemma~2.2]{sharp},  the following hold for $0 \leq i,j\leq d$:
\begin{align*}
E^*_i A E^*_j &= \begin{cases} 0, & \mbox{if $\vert i-j\vert >1$}; \\
                                            \not=0, & \mbox{if $\vert i-j\vert=1$};
                         \end{cases} \\
E_i A^* E_j &= \begin{cases} 0, & \mbox{if $\vert i-j\vert >1$}; \\
                                            \not=0, & \mbox{if $\vert i-j\vert=1$}.
                         \end{cases} 
\end{align*}

\noindent 
By \cite[Corollary~5.7]{NsomeAlg}, for $0 \leq i \leq d$ the following subspaces have the same dimension:
\begin{align*}
E_iV, \qquad E_{d-i}V, \qquad E^*_iV, \qquad E^*_{d-i}V.
\end{align*}
\noindent This common dimension is denoted by $\rho_i$. By construction, $\rho_i \not=0$ $(0 \leq i \leq d)$. Also by construction, $\rho_i = \rho_{d-i}$ $(0 \leq i \leq d)$.
By \cite[Corollary~6.6]{NsomeAlg} we have $\rho_{i-1} \leq \rho_i$ for $1 \leq i \leq d/2$. The sequence $\lbrace \rho_i \rbrace_{i=0}^d$ is called the {\it shape} of $\Phi$.
We call $\Phi$ a {\it Leonard system} whenever $\rho_i=1$ for $0\leq i \leq d$; see \cite[Definition~1.4]{LS99}.
\medskip

\noindent By \cite[Definition~5.1, Theorem~6.7]{switch}, if $\Phi$ is a Leonard system then there exists an element $S \in \langle A \rangle $ such that $S E^*_0V=E^*_dV$.
A bit later, it was shown \cite[Theorem~5.2]{nomSplit} that the following are equivalent:
\begin{enumerate}
\item[\rm (i)] there exists a nonzero $S \in \langle A \rangle$ such that $S E^*_0V \subseteq E^*_dV$;
\item[\rm (ii)] $\Phi$ is a Leonard system.
\end{enumerate}

\noindent The previous result will play an important role later in the paper. Because of its importance and in order to illuminate the ideas involved, we will give a short proof. We will also strengthen the statement, as follows.

\begin{proposition}\label{thm:main1} Let $\Phi = (A; \lbrace E_i \rbrace_{i=0}^d; A^*; \lbrace E^*_i \rbrace_{i=0}^d )$ denote a tridiagonal system on $V$.
Then the following are equivalent:
\begin{enumerate}
\item[\rm (i)] there exists a nonzero $v \in E^*_0V$ and  nonzero $S \in \langle A \rangle$ such that $Sv \in E^*_dV$;
\item[\rm (ii)] $\Phi$ is a Leonard system.
\end{enumerate}
\noindent Assume that {\rm (i), (ii)} hold. Then $S E^*_0V=E^*_dV$.
\end{proposition}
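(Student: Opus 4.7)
For (ii) $\Rightarrow$ (i), the plan is to invoke \cite[Theorem~6.7]{switch}: since $\Phi$ is Leonard, $E^*_0V$ is $1$-dimensional, so that theorem supplies $S\in\langle A\rangle$ with $SE^*_0V = E^*_dV$; any nonzero $v\in E^*_0V$ then satisfies $Sv \in E^*_dV$ and $Sv\neq 0$. This direction also delivers the closing assertion $SE^*_0V=E^*_dV$ once the Leonard property is in hand, since $SE^*_0V=\mathbb{F}Sv$ is a nonzero subspace of the $1$-dimensional $E^*_dV$.

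For (i) $\Rightarrow$ (ii), the main plan is to study the cyclic $\langle A\rangle$-submodule $W:=\langle A\rangle v$ of $V$ and show $W=V$. As a preliminary, I would cite standard tridiagonal-pair theory from \cite{NsomeAlg} to say that the projection $E_j\colon E^*_0V\to E_jV$ is injective for each $j$; hence $E_0v,\dots,E_dv$ are nonzero eigenvectors of $A$ with distinct eigenvalues, so they are linearly independent, $\dim W=d+1$, and the map $T\mapsto Tv$, $\langle A\rangle\to V$, is injective. In particular $Sv\neq 0$. Next, decompose $A = L + K + R$, where $R,K,L$ send $E^*_iV$ to $E^*_{i+1}V$, $E^*_iV$, $E^*_{i-1}V$ respectively. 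Any length-$k$ walk on $\{0,\dots,d\}$ with steps in $\{-1,0,+1\}$ from $0$ to $k$ must take all $+1$ steps, so $E^*_k A^k v = R^k v$. Writing $S=\sum_j s_j A^j$ and using $E^*_d A^j v = 0$ for $j<d$, the condition $Sv = E^*_d Sv$ forces $Sv = s_d R^d v\neq 0$, so $R^d v\neq 0$ and consequently $R^k v\neq 0$ for every $0\leq k\leq d$.

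The main obstacle is to upgrade these nonvanishing leading terms to the statement $W\cap E^*_k V\neq 0$ for each $k$. I plan to proceed by induction on $k$: given $p_0(A),\dots,p_{k-1}(A)\in\langle A\rangle$ with $p_j(A)v$ a nonzero element of $W\cap E^*_j V$, the vector $A^k v$ has $E^*_k$-component $R^k v\neq 0$ and lower components lying in $\bigoplus_{j<k}E^*_j V$. Subtracting appropriate scalar multiples of $p_0(A)v,\dots,p_{k-1}(A)v$ from $A^k v$ cancels these lower components and yields $p_k(A)v\in W\cap E^*_k V$ nonzero. Making this elimination close at every stage is the delicate point: at step $k$ one needs the lower components of $A^k v$ to actually lie in the span of the $p_j(A)v$ previously constructed, which is where the tridiagonal structure and a careful bookkeeping argument are required.

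Once $W\cap E^*_k V\neq 0$ for all $k$, summing dimensions gives $d+1\leq\sum_{k=0}^d\dim(W\cap E^*_k V)\leq\dim W=d+1$, which forces $W=\bigoplus_{k=0}^d(W\cap E^*_k V)$ and hence exhibits $W$ as $A^*$-invariant. Being a nonzero $\{A,A^*\}$-invariant subspace of $V$, Definition~\ref{def:TDP}(iv) forces $W=V$, so $\dim V=d+1=\sum_i\rho_i$, and therefore $\rho_i=1$ for all $i$. Thus $\Phi$ is a Leonard system, and the final assertion $SE^*_0V=E^*_dV$ follows as in the (ii)$\Rightarrow$(i) discussion above.
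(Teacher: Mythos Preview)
Your overall architecture matches the paper's: set $W=\langle A\rangle v$, show $\dim W=d+1$, show $W$ is $A^*$-invariant, invoke irreducibility to get $W=V$, and read off $\rho_i=1$. The direction (ii)$\Rightarrow$(i) and the final assertion are fine.

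The gap is exactly where you flag it as ``delicate'': your inductive elimination does not close. At stage $k$ you write $A^kv = R^kv + (\text{lower})$ with the lower piece in $E^*_0V+\cdots+E^*_{k-1}V$, and you want to subtract multiples of $p_0(A)v,\ldots,p_{k-1}(A)v$ to cancel it. But the $E^*_j$-component of $A^kv$ lives in $E^*_jV$, which may have dimension $\rho_j>1$, and you have no reason to believe that component is a scalar multiple of the single vector $p_j(A)v$ you previously constructed. Equivalently, you have not shown that $W\cap(E^*_0V+\cdots+E^*_{k-1}V)$ is spanned by $p_0(A)v,\ldots,p_{k-1}(A)v$; you only know it contains them. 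No amount of tridiagonal bookkeeping in the $E^*$-grading fixes this, because $W$ is not known to be $E^*_j$-invariant before the proof is finished.

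The paper sidesteps this by working in the split decomposition $U_i=(E^*_0V+\cdots+E^*_iV)\cap(E_0V+\cdots+E_{d-i}V)$ rather than the $E^*$-grading. The polynomials $\eta_i(\lambda)=\prod_{j=0}^{i-1}(\lambda-\theta_{d-j})$ have the property that $\eta_i(A)v\in U_i$ automatically, and the $U_i$ sum is direct. Expanding $S=\sum_i\alpha_i\eta_i(A)$ and applying $A^*-\theta^*_dI$ to $Sv=0$ (componentwise in the $U$-grading) yields the exact recursion $\alpha_i(A^*-\theta^*_iI)\eta_i(A)v=\alpha_{i-1}(\theta^*_d-\theta^*_{i-1})\eta_{i-1}(A)v$, from which $\alpha_i\neq 0$ for all $i$ and $A^*\eta_i(A)v\in\mathrm{Span}\{\eta_i(A)v,\eta_{i-1}(A)v\}\subseteq W$ follow. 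This is the missing ingredient: a grading on $V$ in which both (a) polynomials in $A$ applied to $v$ have controlled support, and (b) the hypothesis $Sv\in E^*_dV$ translates into a usable relation for $A^*$. The $E^*$-grading gives you (a) but not (b).
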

\begin{remark} \rm The tridiagonal system $\Phi$ is called  {\it sharp} whenever $\rho_0=1$, see \cite[Definition~1.5]{sharp}. If $\mathbb F$ is algebraically closed, then $\Phi$ is sharp \cite[Theorem~1.3]{structure}.
If $\Phi$ is sharp, then  \cite[Theorem~5.2]{nomSplit} and Proposition \ref{thm:main1}  are equivalent.
\end{remark}

\noindent We will prove Proposition \ref{thm:main1} after a few comments. These comments refer to the tridiagonal system $\Phi$ in Proposition \ref{thm:main1}.
For $0 \leq i \leq d$ let $\theta_i$ (resp. $\theta^*_i$) denote the eigenvalue for $A$ (resp. $A^*$) associated with
$E_i$ (resp. $E^*_i$). By construction, the scalars $\lbrace \theta_i \rbrace_{i=0}^d$ are mutually distinct and contained in $\mathbb F$. Similarly, the scalars  $\lbrace \theta^*_i \rbrace_{i=0}^d$ are mutually distinct and contained in $\mathbb F$.
For $0 \leq i \leq d$ define
\begin{align*}
U_i = (E^*_0V+E^*_1V+\cdots+ E^*_iV) \cap (E_0 V + E_1V+\cdots + E_{d-i}V).
\end{align*}
For example, $U_0=E^*_0V$ and $U_d=E_0V$.
By \cite[Theorem~4.6]{NsomeAlg}, the sum
$V = \sum_{i=0}^d U_i$ is direct.
By \cite[Corollary~5.7]{NsomeAlg}, ${\rm dim}\,U_i =\rho_i$ $(0 \leq i \leq d)$.
By \cite[Theorem~4.6]{NsomeAlg}, we have
\begin{align*}
&(A-\theta_{d-i} I ) U_i \subseteq U_{i+1} \quad (0 \leq i \leq d-1), \qquad \quad (A-\theta_0I)U_d=0, \\
&(A^*-\theta^*_iI) U_i \subseteq U_{i-1} \quad (1 \leq i \leq d), \qquad \quad (A^*-\theta^*_0 I) U_0=0.
\end{align*}
\noindent Following \cite[Section~6]{NsomeAlg}, we define a map $R \in {\rm End}(V)$ that acts on $U_i$ as $A-\theta_{d-i} I$ $(0 \leq i \leq d)$. By construction
\begin{align*}
R U_i \subseteq U_{i+1} \quad (0 \leq i \leq d-1), \qquad \quad RU_d=0.
\end{align*}
We call $R$ the {\it raising map} for $\Phi$.
By \cite[Lemma~6.5]{NsomeAlg}, the map
$R^{d-2i}: U_i \to U_{d-i}$ is a bijection for $0 \leq i \leq d/2$. In particular, the map $R^d: U_0 \to U_d$ is a bijection.
Shortly, we will say more about $R$.
\medskip

\noindent Let $\lambda$ denote an indeterminate. For $0 \leq i \leq d$ define a polynmial
\begin{align*}
\eta_i(\lambda) = (\lambda-\theta_d)(\lambda-\theta_{d-1}) \cdots (\lambda - \theta_{d-i+1}).
\end{align*}
We interpret $\eta_0(\lambda)=1$.  The polynomial $\eta_i(\lambda)$ is monic with degree $i$ $(0 \leq i \leq d)$.  Let $0 \not=v \in E^*_0V$. By construction,
\begin{align}
& \eta_i(A) v \in U_i \qquad \qquad (0 \leq i \leq d).
 \label{eq:Ri}
 \end{align}
 \noindent By \eqref{eq:Ri} and our comments about $R$,
 \begin{align}
&\eta_i(A)v =R^i v \not=0 \qquad \qquad (0 \leq i \leq d).
 \label{eq:tauBasis}
\end{align}
Recall that $\langle A \rangle$ is the subalgebra of ${\rm End}(V)$ generated by $A$.
This subalgebra has dimension $d+1$. Each of the following is a basis for $\langle A \rangle$:
\begin{align*}
\lbrace E_i \rbrace_{i=0}^d, \qquad \quad 
\lbrace A^i \rbrace_{i=0}^d, \qquad \quad
\lbrace \eta_i(A) \rbrace_{i=0}^d.
\end{align*}

 \begin{lemma} \label{lem:inj} {\rm (See \cite[Lemma~3.1]{nomSplit}.)} Let $\Phi = (A; \lbrace E_i \rbrace_{i=0}^d; A^*; \lbrace E^*_i \rbrace_{i=0}^d )$ denote a tridiagonal system on $V$.
 \begin{enumerate}
 \item[\rm (i)]
  Let $0\not=v \in E^*_0V$. Then the map $\langle A \rangle \to V, B \mapsto Bv$ is injective.
   \item[\rm (ii)]
  Let $0 \not=w \in E^*_dV$. Then the map $\langle A \rangle \to V, B \mapsto Bw$ is injective.
\end{enumerate}  
\end{lemma}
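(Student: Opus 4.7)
The plan is to use the basis $\{\eta_i(A)\}_{i=0}^d$ of $\langle A \rangle$ together with the direct-sum decomposition $V=\bigoplus_{i=0}^d U_i$ to turn the injectivity question into a linear independence assertion in $V$.

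For part (i), I would let $0\neq v\in E^*_0V$ and suppose $B\in\langle A\rangle$ satisfies $Bv=0$. Since $\{\eta_i(A)\}_{i=0}^d$ is a basis for $\langle A\rangle$, write $B=\sum_{i=0}^d c_i\,\eta_i(A)$ with $c_i\in\mathbb F$. By \eqref{eq:tauBasis}, $\eta_i(A)v=R^iv$ is a \emph{nonzero} element of $U_i$ for every $0\leq i\leq d$. Because the sum $V=\sum_{i=0}^d U_i$ is direct, the vectors $\{R^iv\}_{i=0}^d$ lie in distinct summands and are therefore linearly independent. The equation
\begin{align*}
0=Bv=\sum_{i=0}^d c_i R^iv
\end{align*}
then forces $c_0=c_1=\cdots=c_d=0$, so $B=0$. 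This gives injectivity.

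For part (ii), I would deduce it from (i) by applying the symmetry $\Phi\mapsto\Phi^\downarrow$ recorded earlier in the excerpt. The system $\Phi^\downarrow=(A;\{E_i\}_{i=0}^d;A^*;\{E^*_{d-i}\}_{i=0}^d)$ is again a tridiagonal system on $V$, with the ordering of the dual primitive idempotents reversed. In particular, the ``initial'' dual subspace for $\Phi^\downarrow$ is $E^*_dV$. Applying (i) to $\Phi^\downarrow$ with $0\neq w\in E^*_dV$ then yields the injectivity of $B\mapsto Bw$ on $\langle A\rangle$, which is exactly the content of (ii). No step here looks difficult; the only thing to be careful about is noting that the formula \eqref{eq:tauBasis} and the direct-sum decomposition both depend on which tridiagonal system we are using, so the symmetry between (i) and (ii) has to be realized through $\Phi^\downarrow$ rather than applied directly to $\Phi$.
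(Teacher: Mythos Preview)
Your proof is correct and follows essentially the same route as the paper: for (i) you use the basis $\{\eta_i(A)\}_{i=0}^d$ of $\langle A\rangle$, the fact that $\eta_i(A)v=R^iv$ is a nonzero element of $U_i$, and the directness of $V=\sum_i U_i$ to obtain linear independence; for (ii) you apply (i) to $\Phi^\downarrow$, exactly as the paper does.
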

\begin{proof} (i) The elements $\lbrace \eta_i(A)\rbrace_{i=0}^d $ form a basis for $\langle A \rangle$. The vectors
 $\lbrace \eta_i(A)v\rbrace_{i=0}^d$ are linearly independent  by \eqref{eq:Ri}, \eqref{eq:tauBasis} and since the sum $\sum_{i=0}^d U_i$ is direct.
 The result follows. \\
 \noindent (ii)
 Apply (i) above  to the tridiagonal system $\Phi^\downarrow$.
\end{proof}

\noindent We are now ready to prove Proposition \ref{thm:main1}.
\medskip

\noindent {\it Proof of Proposition \ref{thm:main1}}: ${\rm (i)} \Rightarrow {\rm (ii)}$: Consider the subspace $W= \langle A \rangle v \subseteq V$.  Note that $W$ has a basis $\lbrace E_i v \rbrace_{i=0}^d$,
by Lemma \ref{lem:inj} and since $\lbrace E_i  \rbrace_{i=0}^d$ is a basis for $\langle A \rangle$.
We will show that $W=V$.
By construction, $W\not=0$ and $A W \subseteq W$. To show that $W=V$, by Definition  \ref{def:TDP}(iv) it suffices to show that $A^*W \subseteq W$. The vectors $\lbrace \eta_i(A)\rbrace_{i=0}^d$ form a basis for $\langle A \rangle$.
Write
\begin{align*}
S = \sum_{i=0}^d \alpha_i \eta_i(A) \qquad \qquad \alpha_i \in \mathbb F.
\end{align*}
 Note that $A^*v =\theta^*_0v$ since $v \in E^*_0V$. \\
We first claim that
\begin{align}
\alpha_i (A^*- \theta^*_i I) \eta_i(A)v = \alpha_{i-1} (\theta^*_d - \theta^*_{i-1}) \eta_{i-1}(A)v \qquad \quad (1 \leq i \leq d).
\label{eq:claim2}
\end{align}
To prove the claim, note that $(A^*-\theta^*_d I)Sv=0$ since $Sv \in E^*_dV$.
We have
\begin{align*}
0 &= (A^*-\theta^*_dI) Sv \\
   &=  (A^*-\theta^*_dI) \sum_{i=0}^d \alpha_i \eta_i(A)v \\
     &=   \sum_{i=0}^d \alpha_i (A^*-\theta^*_dI)  \eta_i(A)v \\
 &=   \sum_{i=0}^d \alpha_i (A^*-\theta^*_i I+ \theta^*_i I-\theta^*_dI)  \eta_i(A)v \\
 &=   \sum_{i=0}^d \alpha_i (A^*-\theta^*_i I)\eta_i(A)v-        \sum_{i=0}^d \alpha_i( \theta^*_d-\theta^*_i)  \eta_i(A)v \\
  &=   \sum_{i=1}^d \alpha_i (A^*-\theta^*_i I)\eta_i(A)v -       \sum_{i=0}^{d-1} \alpha_i( \theta^*_d-\theta^*_i)  \eta_i(A)v \\
   &=   \sum_{i=1}^d \alpha_i (A^*-\theta^*_i I)\eta_i(A)v -       \sum_{i=1}^{d} \alpha_{i-1}( \theta^*_d-\theta^*_{i-1})  \eta_{i-1}(A)v \\
   & = \sum_{i=1}^d \Bigl( \alpha_i (A^*-\theta^*_i I)\eta_i(A)v - \alpha_{i-1}(\theta^*_d-\theta^*_{i-1})\eta_{i-1}(A)v    \Bigr).
\end{align*}
In the previous sum, the $i$th summand is contained in $U_{i-1}$ $(1 \leq i \leq d)$. By this and since the sum $\sum_{j=0}^d U_j$ 
is direct,  the $i$th summand must be zero  $(1 \leq i \leq d)$.  This gives \eqref{eq:claim2}, and
the first claim is proven. \\
Our second claim is that $\alpha_i \not=0$ $(0 \leq i \leq d)$. We prove this claim by induction on $i$. First assume that $i=0$. 
To prove $\alpha_0\not=0$, we assume that $\alpha_0=0$ and get a contradiction.
Using $E_dA=\theta_dE_d$ we obtain
\begin{align*}
E_dSv = E_d \sum_{i=1}^d \alpha_i \eta_i(A)v 
            = \sum_{i=1}^d \alpha_i \eta_i(\theta_d) E_dv 
             =0,
\end{align*}
with the last equality holding because  $\eta_i(\theta_d)=0$ for $1 \leq i \leq d$. We have $Sv \in E^*_dV$ and $E_d Sv=0$ and $0 \not=E_d \in \langle A \rangle$, so
 $Sv =0$ by Lemma \ref{lem:inj}(ii). We have 
 $v \in E^*_0V$ and $Sv =0$ and $0 \not=S \in \langle A \rangle$, 
 so $v=0$ by Lemma \ref{lem:inj}(i).
This is a contradiction, so $\alpha_0 \not=0$. 
Next assume that $1 \leq i \leq d$. Then $\alpha_i \not=0$ by induction and the first claim, since the right-hand side of \eqref{eq:claim2} is nonzero.
The second claim is proven. \\
We can now easily show that $A^* W \subseteq W$. The vectors  $\lbrace \eta_i(A)v\rbrace_{i=0}^d$ form a basis for $W$.
We show that $A^* \eta_i(A)v \in W$ for $0 \leq i \leq d$.
First assume that $i=0$. Then  $A^* \eta_0(A)v = A^* v = \theta^*_0v \in W$. Next assume that $1\leq i \leq d$.
By the first and second claim,
\begin{align*}
A^* \eta_i(A)v \in {\rm Span} \Bigl \lbrace \eta_i(A)v, \eta_{i-1}(A)v\Bigr \rbrace \subseteq W.
\end{align*}
We have shown that $A^*W \subseteq W$, so $W=V$. By this and the construction, ${\dim}\,E_i V=1$ $(0 \leq i \leq d)$.
 Consequently $\Phi$ is a Leonard system.
We have proved Proposition \ref{thm:main1} in the direction ${\rm (i)} \Rightarrow {\rm (ii)}$. The reverse direction 
${\rm (ii)} \Rightarrow {\rm (i)}$ is proved in \cite[Theorem~6.7]{switch}. 
\medskip

\noindent Assume that (i), (ii) hold. By (ii), the subspaces $E^*_0V$ and $E^*_dV$ have dimension one.
By construction $0 \not=v \in E^*_0V$, so $v$ is a basis for $E^*_0V$.
By construction and Lemma \ref{lem:inj}(i), $0 \not=Sv \in E^*_dV$. Therefore $Sv$ is a basis for $E^*_dV$.
It follows that $SE^*_0V=E^*_dV$.
 \hfill $\Box$
 \medskip
 
 \noindent We mention some variations on the above results.

  \begin{corollary} \label{lem:injd} Let $\Phi = (A; \lbrace E_i \rbrace_{i=0}^d; A^*; \lbrace E^*_i \rbrace_{i=0}^d )$ denote a tridiagonal system on $V$.
  \begin{enumerate}
  \item[\rm (i)] 
   Let $0\not=v \in E_0V$. Then the map $\langle A^* \rangle \to V, B \mapsto Bv$ is injective.
   \item[\rm (ii)] Let $0 \not=w \in E_dV$. Then the map $\langle A^* \rangle \to V, B \mapsto Bw$ is injective.
   \end{enumerate}
\end{corollary}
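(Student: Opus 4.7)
The plan is to reduce Corollary \ref{lem:injd} to Lemma \ref{lem:inj} by exploiting the symmetry between $A$ and $A^*$ in the definition of a tridiagonal system, i.e.\ by passing to the tridiagonal system $\Phi^*$ introduced earlier in the excerpt.

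First I would recall that $\Phi^* = (A^*; \lbrace E^*_i \rbrace_{i=0}^d; A; \lbrace E_i \rbrace_{i=0}^d)$ is itself a tridiagonal system on $V$ with the same diameter $d$. Under the passage from $\Phi$ to $\Phi^*$, the roles of the two distinguished generators swap: the element playing the role of ``$A$'' in $\Phi^*$ is $A^*$, the sequence playing the role of ``$\lbrace E_i \rbrace$'' is $\lbrace E^*_i \rbrace$, and the sequence playing the role of ``$\lbrace E^*_i \rbrace$'' is $\lbrace E_i \rbrace$. Consequently, the subalgebra playing the role of ``$\langle A \rangle$'' for $\Phi^*$ is $\langle A^* \rangle$, and the subspaces playing the roles of ``$E^*_0 V$'' and ``$E^*_d V$'' are $E_0 V$ and $E_d V$ respectively.

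Now I would simply apply Lemma \ref{lem:inj} to $\Phi^*$. Part (i) of that lemma, applied to $\Phi^*$, asserts that for every nonzero $v \in E_0 V$ the map $\langle A^* \rangle \to V$, $B \mapsto Bv$ is injective; this is precisely part (i) of the corollary. Part (ii) of that lemma, applied to $\Phi^*$, asserts that for every nonzero $w \in E_d V$ the map $\langle A^* \rangle \to V$, $B \mapsto Bw$ is injective; this is precisely part (ii) of the corollary.

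There is essentially no obstacle here; the only thing to be careful about is the bookkeeping of which sequence plays which role under the symmetry $\Phi \mapsto \Phi^*$. Once that translation is explicit, both statements are literal restatements of Lemma \ref{lem:inj} for $\Phi^*$.
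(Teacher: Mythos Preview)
Your proposal is correct and follows exactly the same approach as the paper: apply Lemma~\ref{lem:inj} to the tridiagonal system $\Phi^*$, noting that under this swap the roles of $\langle A\rangle$, $E^*_0V$, $E^*_dV$ become $\langle A^*\rangle$, $E_0V$, $E_dV$.
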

\begin{proof} Apply Lemma \ref{lem:inj}  to $\Phi^*$.
\end{proof}

 \begin{corollary}\label{thm:main1d} Let $\Phi = (A; \lbrace E_i \rbrace_{i=0}^d; A^*; \lbrace E^*_i \rbrace_{i=0}^d )$ denote a tridiagonal system on $V$.
Then the following are equivalent:
\begin{enumerate}
\item[\rm (i)] there exists a nonzero $v \in E_0V$ and  nonzero $S^* \in \langle A^* \rangle$ such that $S^*v \in E_dV$;
\item[\rm (ii)] $\Phi$ is a Leonard system.
\end{enumerate}
\noindent Assume that {\rm (i), (ii)} hold. Then $S^* E_0W=E_dV$.
\end{corollary}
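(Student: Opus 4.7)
The plan is to obtain Corollary \ref{thm:main1d} as an immediate consequence of Proposition \ref{thm:main1}, applied to the dual tridiagonal system $\Phi^* = (A^*; \lbrace E^*_i \rbrace_{i=0}^d; A; \lbrace E_i \rbrace_{i=0}^d)$. Recall from the discussion preceding Lemma \ref{lem:inj} that $\Phi^*$ is itself a tridiagonal system on $V$, obtained from $\Phi$ by interchanging the roles of $A$ and $A^*$ along with their primitive idempotents.

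Under this duality, the objects appearing in Proposition \ref{thm:main1} correspond to objects in Corollary \ref{thm:main1d} as follows: the eigenspaces $E^*_0V$ and $E^*_dV$ of the dual adjacency matrix of $\Phi$ become the eigenspaces $E_0V$ and $E_dV$ of the adjacency matrix of $\Phi$ (since these play the role of ``$E^*_0V$'' and ``$E^*_dV$'' for $\Phi^*$); the subalgebra $\langle A \rangle$ becomes $\langle A^* \rangle$; and the element $S$ becomes $S^*$. Thus condition (i) of Proposition \ref{thm:main1} as applied to $\Phi^*$ is precisely condition (i) of Corollary \ref{thm:main1d}, while the final assertion $SE^*_0V = E^*_dV$ translates to $S^*E_0V = E_dV$.

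It remains to check that the Leonard property is preserved under $\Phi \leftrightarrow \Phi^*$. This follows at once from \cite[Corollary~5.7]{NsomeAlg}: the shape $\lbrace \rho_i \rbrace_{i=0}^d$ is the common dimension of the four subspaces $E_iV, E_{d-i}V, E^*_iV, E^*_{d-i}V$, so $\Phi$ and $\Phi^*$ share the same shape. In particular, $\Phi$ is a Leonard system if and only if $\Phi^*$ is, matching condition (ii) of Corollary \ref{thm:main1d} with condition (ii) of Proposition \ref{thm:main1} for $\Phi^*$.

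With the correspondence set up, the proof is essentially a one-liner: invoke Proposition \ref{thm:main1} on $\Phi^*$. There is no real obstacle here; the only thing to be careful about is the bookkeeping of which primitive idempotents belong to which matrix under the star involution, and the mild observation that the shape is invariant under this involution. An explicit ``apply Proposition \ref{thm:main1} to $\Phi^*$'' proof, patterned after the proof of Corollary \ref{lem:injd}, will suffice.
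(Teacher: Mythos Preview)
Your proposal is correct and takes essentially the same approach as the paper, which proves the corollary in one line: ``Apply Proposition~\ref{thm:main1} to $\Phi^*$.'' Your additional remarks about the shape being invariant under $\Phi \leftrightarrow \Phi^*$ and the bookkeeping of idempotents are accurate elaborations of what that one line entails.
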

\begin{proof}
Apply Proposition \ref{thm:main1}  to $\Phi^*$.
 \end{proof}

\section{$Q$-polynomial distance-regular graphs}
We now turn our attention to graph theory. For the rest of this paper, we assume that $\mathbb F=\mathbb C$. We will use the following notation.
Let $X$ denote a nonempty finite set.
Let ${\rm Mat}_X(\mathbb C)$ denote the $\mathbb C$-algebra consisting of the matrices that have rows and columns indexed by $X$
and all entries in $\mathbb C$. 
Let $V=\mathbb C^X $ denote the vector space over $\mathbb C$, consisting of the column vectors that have coordinates indexed by $X$ and all entries in $\mathbb C$.
The algebra ${\rm Mat}_X(\mathbb C)$ acts on $V$ by left multiplication. We call $V$ the {\it standard module}.
We endow $V$ with a Hermitean form $\langle \,,\,\rangle$ such that
$\langle u,v\rangle = u^{\rm t} {\overline v}$ for all $u,v \in V$. Here $\rm t$ denotes transpose and $-$ denotes complex conjugation.
We comment on the Hermitean form. For $u,v \in V$ and $B \in {\rm Mat}_X(\mathbb C)$,
\begin{align} 
\label{eq:AABil}
\langle Bu, v \rangle = \langle u, {\overline B}^{\rm t} v\rangle.
\end{align}
For $x \in X$ define a vector  ${\hat x} \in V$ that has $x$-coordinate $1$
and all other coordinates $0$. The vectors $\lbrace \hat x \vert x \in X\rbrace$ form an orthonormal basis for $V$. For a nonempty subset
$C \subseteq X$ the vector $\sum_{x \in C} {\hat x}$ is called the {\it characteristic vector of $C$}.
\medskip

\noindent
Let $\Gamma=(X, \mathcal R)$ denote a finite, undirected graph, without loops or multiple edges, with vertex set $X$ and adjacency relation $\mathcal R$.
For the time being, we do not assume that $\Gamma$ is connected.
For $x \in X$ let $\Gamma(x)$ denote the set of vertices in $X$ that are adjacent to $x$.
We say that $\Gamma$ is {\it regular with valency $k$} whenever the cardinality $\vert \Gamma(x)\vert=k$ for all $x \in X$.
\medskip

\noindent Define a matrix $A \in {\rm Mat}_X(\mathbb C)$ that has $(x,y)$-entry
\begin{align*}
A_{x,y}= \begin{cases} 1, & \mbox{if $x, y$ are adjacent}; \\
                                            0, & \mbox{if $x,y$ are not adjacent}
                         \end{cases}          \qquad \qquad (x,y \in X).
\end{align*}
\noindent We call $A$ the {\it adjacency matrix} of $\Gamma$. The matrix $A$ is symmetric with real entries, so $A$ is diagonalizable over $\mathbb R$.
By the {\it eigenvalues of $\Gamma$} we mean the roots of the minimal polynomial of $A$. These eigenvalues are real and mutually distinct.
Later in the paper, we will use the following basic result.

\begin{lemma} \label{lem:REG} {\rm (See \cite[Section~3]{biggs}.)} Assume that $\Gamma$ is regular with valency $k$. Then the following {\rm (i)--(iii)} hold:
\begin{enumerate}
\item[\rm (i)]
 $k$ is the maximal eigenvalue of $\Gamma$;
 \item[\rm (ii)]  the $k$-eigenspace of $A$ has an orthogonal basis consisting of the characteristic vectors of the  connected components of $\Gamma$;
\item[\rm (iii)]   the $k$-eigenspace of $A$ has dimension equal to the number of connected components of $\Gamma$.
\end{enumerate}
\end{lemma}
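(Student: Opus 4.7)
The plan is to handle all three parts together via the classical maximum-coordinate argument, together with the observation that the adjacency matrix $A$ preserves the subspace spanned by the $\hat x$ for $x$ in a fixed connected component. First, let me reduce to the connected case. Let $X = C_1 \cup C_2 \cup \cdots \cup C_m$ be the partition of $X$ into the vertex sets of the connected components of $\Gamma$, and for each $j$ let $V_j = \mathrm{Span}\{\hat x : x \in C_j\}$. Since every edge stays within a single component, we have $AV_j \subseteq V_j$, and $V = \bigoplus_{j=1}^m V_j$ is an orthogonal direct sum decomposition. In particular, the $k$-eigenspace of $A$ is the orthogonal direct sum of the $k$-eigenspaces of the restrictions $A|_{V_j}$, and the characteristic vectors $\chi_{C_j} = \sum_{x \in C_j} \hat x$ are pairwise orthogonal, each satisfying $A \chi_{C_j} = k \chi_{C_j}$ because each vertex in $C_j$ has exactly $k$ neighbors, all of which lie in $C_j$. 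So it will suffice to prove, for the connected case, that $k$ is the maximum eigenvalue and that its eigenspace has dimension at most one.

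Next, assume $\Gamma$ is connected (equivalently, $m=1$), and let $v \in V$ be a nonzero eigenvector, $Av = \lambda v$, where $\lambda$ is real because $A$ is real symmetric. Choose $y \in X$ maximizing $|v_y|$; by rescaling $v$ by a unit complex scalar I may assume $v_y$ is a positive real number. Comparing the $y$-coordinate of $Av = \lambda v$ gives
\begin{align*}
\lambda v_y = (Av)_y = \sum_{z \in \Gamma(y)} v_z.
\end{align*}
Taking real parts and then absolute values yields
\begin{align*}
\lambda v_y = \sum_{z \in \Gamma(y)} \mathrm{Re}(v_z) \leq \sum_{z \in \Gamma(y)} |v_z| \leq k v_y,
\end{align*}
so $\lambda \leq k$. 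This already proves (i), since $k$ is an eigenvalue with eigenvector $\chi_X$.

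Finally, I assemble (ii) and (iii). Keeping $\Gamma$ connected, suppose $\lambda = k$ in the inequality above. Then equality holds throughout, which forces $\mathrm{Re}(v_z) = |v_z| = v_y$ for every $z \in \Gamma(y)$; hence $v_z = v_y$ for all $z \in \Gamma(y)$. In particular $|v_z|$ again attains the maximum at every neighbor $z$, so the argument can be iterated, and because $\Gamma$ is connected we conclude $v$ is constant on $X$, i.e.\ $v$ is a scalar multiple of $\chi_X$. Thus the $k$-eigenspace of $A|_{V_j}$ is one-dimensional and spanned by $\chi_{C_j}$ for each $j$. Combining with the orthogonal decomposition $V = \bigoplus_j V_j$ from the first paragraph, the $k$-eigenspace of $A$ has $\{\chi_{C_1}, \ldots, \chi_{C_m}\}$ as an orthogonal basis, proving (ii) and (iii). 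The only mildly delicate step is the equality analysis for complex eigenvectors, handled above by rescaling $v$ so that the chosen extremal coordinate is a positive real number; once this normalization is in place the rest is routine.
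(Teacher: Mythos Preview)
Your argument is correct. The paper does not supply its own proof of this lemma; it simply cites \cite[Section~3]{biggs} and moves on. What you have written is exactly the standard elementary argument one finds there (and in most texts): block-decompose along connected components, then on each connected piece use the maximum-modulus coordinate trick to bound every eigenvalue by the valency and to pin down the top eigenspace. Your handling of complex eigenvectors via a unimodular rescaling at the extremal coordinate is clean; the only implicit point is that the extremal $|v_y|$ is nonzero because $v\neq 0$, which is harmless. So there is nothing to compare beyond noting that you have filled in the details the paper elected to outsource.
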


\noindent Next, we recall the distance-regular property.
For the rest of this paper, assume that $\Gamma$ is connected. Let $\partial$ denote the path-length distance function for $\Gamma$,
and recall the diameter
 \begin{align*}
 D={\rm max} \lbrace \partial(x,y) \vert x,y \in X\rbrace.
 \end{align*}
 \noindent For $x \in X$ and $0 \leq i \leq D$ define the set $\Gamma_i(x) = \lbrace y \in X \vert \partial(x,y)=i\rbrace$. So $\Gamma_0(x)=\lbrace x \rbrace$, 
 and $\Gamma_1(x) = \Gamma(x)$ if $D\geq 1$.
The graph $\Gamma$ is said to be  {\it distance-regular} whenever for  $0 \leq h,i,j\leq D$ and $x,y \in X$ at distance $\partial(x,y)=h$,
the number
\begin{align*}
p^h_{i,j}  = \vert \Gamma_i(x) \cap \Gamma_j(y)\vert
\end{align*}
is a constant that  is independent of $x,y$. The parameter $p^h_{i,j}$ is called an {\it intersection number} of $\Gamma$.
Background information about distance-regular graphs can be found in \cite{Nbbit, Nbannai, Nbcn, Ndkt, Nint}.
\medskip

\noindent 
 For the rest of this paper,
we assume that $\Gamma$ is distance-regular with $D\geq 1$.
By the triangle inequality, the following hold for $0 \leq h,i,j\leq D$:
\begin{enumerate}
\item[\rm (i)] $p^h_{i,j}= 0$ if one of $h,i,j$ is greater than the sum of the other two;
\item[\rm (ii)] $p^h_{i,j}\not=0$ if one of $h,i,j$ is equal to the sum of the other two.
\end{enumerate}
\noindent Abbreviate
\begin{align*}
c_i = p^i_{1,i-1} \; (1 \leq i \leq D), \qquad
a_i = p^i_{1,i} \; (0 \leq i \leq D), \qquad
b_i = p^i_{1,i+1} \; (0 \leq i \leq D-1).
\end{align*}
By construction, $a_0=0$ and $c_1=1$. The graph $\Gamma$ is regular with valency $k=b_0$. We have
\begin{align*}
c_i + a_i + b_i = k \qquad \qquad (0 \leq i \leq D),
\end{align*}
where $c_0=0$ and $b_D=0$. 
\medskip

\noindent Next, we recall the valencies of $\Gamma$. For $0 \leq i \leq D$, abbreviate $k_i=p^0_{i,i}$ and note that $k_i = \vert \Gamma_i(x) \vert$ for all $x \in X$.
We call $k_i$ the $i$th {\it valency} of $\Gamma$. We have $k_0=1$ and $k_1=k$. By a routine counting argument, $k_i c_i = k_{i-1} b_{i-1}$  $(1 \leq i \leq D)$.
Therefore,
\begin{align} \label{eq:ki}
k_i = \frac{b_0 b_1 \cdots b_{i-1}}{c_1 c_2 \cdots c_i} \qquad \qquad (0 \leq i \leq D).
\end{align}
\noindent Next, we recall the Bose-Mesner algebra of $\Gamma$.
For 
$0 \leq i \leq D$ define $A_i \in {\rm Mat}_X(\mathbb C)$ that has
$(x,y)$-entry
\begin{align*}
(A_i)_{x,y} = \begin{cases}  
1, & {\mbox{\rm if $\partial(x,y)=i$}};\\
0, & {\mbox{\rm if $\partial(x,y) \ne i$}}
\end{cases}
 \qquad \qquad (x,y \in X).
\end{align*}
  \noindent
We call $A_i$ the $i$th {\it distance matrix} of $\Gamma$. We have $A_0=I$ and  $A_1=A$. Let $J\in {\rm Mat}_X(\mathbb C)$ have all entries $1$.
Observe that
 (i)
$J=\sum_{i=0}^D A_i $;
(ii)  $A_i^{\rm t} = A_i  \;(0 \leq i \leq D)$;
(iii)  $\overline{A_i} = A_i  \;(0 \leq i \leq D)$;
(iv) $A_iA_j = \sum_{h=0}^D p^h_{i,j} A_h \;( 0 \leq i,j \leq D) $.
Therefore, the matrices
 $\lbrace A_i\rbrace_{i=0}^D$
form a basis for a commutative subalgebra $M$ of ${\rm Mat}_X(\mathbb C)$, called the 
{\it Bose-Mesner algebra} of $\Gamma$.
The matrix $A$ generates $M$ \cite[Corollary~3.4]{Nint}. 
We mentioned earlier that $A$ is diagonalizable over $\mathbb R$.
 Consequently
$M$ has a second basis 
$\lbrace E_i\rbrace_{i=0}^D$ such that
(i) $E_0 = |X|^{-1}J$;
(ii) $I=\sum_{i=0}^D E_i$;
(iii) $E_i^{\rm t} =E_i  \;(0 \leq i \leq D)$;
(iv) $\overline{E_i} =E_i \;(0 \leq i \leq D)$;
(v) $E_iE_j =\delta_{i,j}E_i  \;(0 \leq i,j \leq D)$.
We call $\lbrace E_i\rbrace_{i=0}^D$  the {\it primitive idempotents}
of $\Gamma$. 
\medskip 

\noindent By construction,
\begin{align}\label{eq:esd}
V = \sum_{i=0}^D E_iV \qquad \qquad {\mbox{\rm (orthogonal direct sum)}}.
\end{align}
The summands in \eqref{eq:esd} are the eigenspaces of $A$. For $0 \leq i \leq D$ let $\theta_i$ denote the eigenvalue of $A$ for $E_iV$. 
We have $AE_i = \theta_i E_i = E_i A$ and $A=\sum_{i=0}^D \theta_i E_i$. By \cite[p.~128]{Nbcn} we have $\theta_0=k$.
\medskip

 \noindent Next, we recall the Krein parameters of $\Gamma$. We have  $A_i \circ A_j = \delta_{i,j} A_i$ $(0 \leq i,j\leq D)$, where $\circ $ denotes entry-wise multiplication.
 Therefore,  $M$  is closed under $\circ$.
Consequently, there exist scalars $q^{h}_{i,j} \in \mathbb C $ $(0 \leq h,i,j\leq D)$ such that
\begin{align*} 
E_i \circ E_j = \vert X \vert^{-1} \sum_{h=0}^D q^h_{i,j} E_h \qquad \qquad (0 \leq i,j\leq D).
\end{align*}  
The scalars $q^{h}_{i,j}$ are called the {\it Krein parameters} of $\Gamma$. The Krein parameters 
are real and nonnegative \cite[p.~69]{Nbannai}.
\medskip

  \noindent Next, we recall the $Q$-polynomial property. The graph $\Gamma$ is said to be {\it $Q$-polynomial} (with respect to the
   ordering $\lbrace E_i \rbrace_{i=0}^D$)  whenever the following hold
for $0 \leq h,i,j\leq D$:
\begin{enumerate}
\item[\rm (i)] $q^h_{i,j}=0$ if one of $h,i,j$ is greater than the sum of the other two;
\item[\rm (ii)]  $q^h_{i,j}\not=0$ if one of $h,i,j$ is equal to the sum of the other two.
\end{enumerate}
For the rest of this paper, we  assume that $\Gamma$ is $Q$-polynomial with respect to the ordering $\lbrace E_i \rbrace_{i=0}^D$.
 \medskip
 
 \noindent
Next, we  recall the dual Bose-Mesner algebras of $\Gamma$.
For the rest of this section, fix
a vertex $x \in X$.
For 
$ 0 \leq i \leq D$ define a diagonal matrix $E_i^*=E_i^*(x)$  in 
 ${\rm Mat}_X(\mathbb C)$
 that has $(y,y)$-entry
\begin{align*}
(E_i^*)_{y,y} = \begin{cases} 1, & \mbox{\rm if $\partial(x,y)=i$};\\
0, & \mbox{\rm if $\partial(x,y) \ne i$}
\end{cases}
 \qquad \qquad (y \in X).
\end{align*}
We call $E_i^*$ the  $i$th {\it dual primitive idempotent of $\Gamma$
 with respect to $x$} \cite[p.~378]{tSub1}. Note that
(i) $I=\sum_{i=0}^D E_i^*$;
(ii) $(E_i^*)^{\rm t} = E_i^*$ $(0 \leq i \leq D)$;
(iii) $\overline{E^*_i} = E_i^*$ $(0 \leq i \leq D)$;
(iv) $E_i^*E_j^* = \delta_{i,j}E_i^* $ $(0 \leq i,j \leq D)$.
Therefore, the matrices
$\lbrace E_i^*\rbrace_{i=0}^D$ form a 
basis for a commutative subalgebra
$M^*=M^*(x)$ of 
${\rm Mat}_X(\mathbb C)$.
We call 
$M^*$ the {\it dual Bose-Mesner algebra of
$\Gamma$ with respect to $x$} \cite[p.~378]{tSub1}.
\medskip

\noindent Next, we recall the dual distance matrices of $\Gamma$.
  For $0 \leq i \leq D$ define a diagonal matrix $A^*_i = A^*_i(x) $  in ${\rm Mat}_X(\mathbb C)$
that has $(y,y)$-entry
\begin{align*}
 (A^*_i)_{y,y} = \vert X \vert (E_i)_{x,y} \qquad \qquad (y \in X).
 \end{align*}
 By \cite[Lemma~5.8]{Nint} the matrices $\lbrace A^*_i \rbrace_{i=0}^D$ form a basis for $M^*$. We have
 (i) $A^*_0=I$;
(ii) $\sum_{i=0}^D A^*_i = \vert X \vert E^*_0$;
(iii)  $(A^*_i)^{\rm t} = A^*_i \;(0 \leq i \leq D)$;
(iv)  $\overline{A^*_i}= A^*_i \;(0 \leq i \leq D)$;
(v) $A^*_i A^*_j = \sum_{h=0}^D q^h_{i,j} A^*_h\; (0 \leq i,j\leq D)$.
We call $\lbrace A^*_i \rbrace_{i=0}^D$ the  {\it dual distance matrices of $\Gamma$ with respect to $x$}. We abbreviate $A^*=A^*_1$ and call this
the {\it dual adjacency matrix of $\Gamma$ with respect to $x$}. By \cite[Corollary~11.6]{Nint}, $A^*$ generates $M^*$.
\medskip

\noindent By construction,
\begin{align*}
E^*_iV = {\rm Span}\lbrace {\hat y} \vert y \in \Gamma_i(x) \rbrace \qquad\qquad (0 \leq i \leq D).
\end{align*}
\noindent Note that
\begin{align} \label{eq:AsED}
V = \sum_{i=0}^D E^*_iV \qquad \qquad {\mbox{\rm (orthogonal direct sum)}}.
\end{align}
\noindent The summands in \eqref{eq:AsED} are the eigenspaces of $A^*$.
 For $0 \leq i \leq D$ let $\theta^*_i$ denote the eigenvalue of $A^*$ for $E^*_iV$. We have $A^*E^*_i = \theta^*_i E^*_i = E^*_i A^*$ and $A^*=\sum_{i=0}^D \theta^*_i E^*_i$. 
The scalars $\lbrace \theta^*_i \rbrace_{i=0}^D$ are real and mutually distinct. We call $\lbrace \theta^*_i \rbrace_{i=0}^D$ the {\it dual eigenvalues of $\Gamma$} (with
respect to the given $Q$-polynomial structure).
\medskip

\noindent
Next, we recall the subconstituent algebras of $\Gamma$.
Let $T=T(x)$ denote the subalgebra of ${\rm Mat}_X(\mathbb C)$ generated by 
$M$ and $M^*$. 
The algebra $T$ is finite-dimensional and noncommutative. We call $T$ the {\it subconstituent algebra}
(or {\it Terwilliger algebra}) {\it of $\Gamma$ 
 with respect to $x$} \cite[Definition 3.3]{tSub1}. Note that $T$ is generated by $A, A^*$.
\medskip

\noindent Next, we recall the triple product relations. By \cite[Lemma~3.2]{tSub1} the following hold for $0 \leq h,i,j\leq D$:
\begin{align*}
&E^*_i A_h E^*_j = 0 \quad \hbox{\rm if and only if}\quad p^h_{i,j} =0; \\
&E_i A^*_h E_j = 0 \quad \hbox{\rm if and only if}\quad q^h_{i,j} =0.
\end{align*}

\noindent Among these relations, we emphasize the cases $h=1$ and $h=D$. 
For $0 \leq i,j\leq D$:
\begin{align*}
E^*_i A E^*_j &= \begin{cases} 0, & \mbox{if $\vert i-j\vert >1$}; \\
                                            \not=0, & \mbox{if $\vert i-j\vert=1$};
                         \end{cases} \\
E_i A^* E_j &= \begin{cases} 0, & \mbox{if $\vert i-j\vert >1$}; \\
                                            \not=0, & \mbox{if $\vert i-j\vert=1$}.
                         \end{cases} 
\end{align*}
For $0 \leq i,j\leq D$:
\begin{align*}
E^*_i A_D E^*_j &= \begin{cases} 0, & \mbox{if $i+j<D$}; \\
                                            \not=0, & \mbox{if $i+j=D$};
                         \end{cases} \\
E_i A_D^* E_j &= \begin{cases} 0, & \mbox{if $i+j<D$}; \\
                                            \not=0, & \mbox{if $i+j=D$}.
                         \end{cases} 
\end{align*}

\noindent We mention a fact for later use.
\begin{lemma} \label{lem:ADinv} {\rm (See \cite[Section~1]{mamart}.)} The matrices $A_D$ and $A^*_D$ are invertible.
\end{lemma}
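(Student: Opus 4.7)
Since $A_D\in M$ and $A_D^*\in M^*$ lie in commutative subalgebras spanned by the respective primitive idempotents, we may write $A_D=\sum_{i=0}^D P_{i,D}E_i$ and $A_D^*=\sum_{i=0}^D Q_{i,D}E_i^*$. Invertibility then reduces to the non-vanishings $P_{i,D}\neq 0$ and $Q_{i,D}\neq 0$ for every $i\in\{0,1,\ldots,D\}$. My plan is to prove both by working in the primary $T$-module $W=T\hat x$ and applying the injectivity results of Lemma~\ref{lem:inj} and Corollary~\ref{lem:injd}.

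A short preliminary calculation establishes that $W$ is irreducible of dimension $D+1$, with two preferred bases $\{E_i\hat x\}_{i=0}^D$ and $\{E_i^*\mathbf 1\}_{i=0}^D$. The second basis is generated via the recursion $AE_i^*\mathbf 1=b_{i-1}E_{i-1}^*\mathbf 1+a_iE_i^*\mathbf 1+c_{i+1}E_{i+1}^*\mathbf 1$ starting from $E_0^*\mathbf 1=\hat x$, and irreducibility follows from the non-vanishing of the off-diagonal intersection numbers $b_{i-1},c_{i+1}$. The first basis is linearly independent because the diagonal entries $(E_i)_{x,x}=m_i/|X|$ are positive, forcing $E_i\hat x\neq 0$ for every $i$. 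Consequently $\dim(E_iV\cap W)=\dim(E_i^*V\cap W)=1$, and $(A,A^*)$ restricted to $W$ forms a tridiagonal pair. The two identities
\begin{align*}
A_D\hat x=E_D^*\mathbf 1,\qquad A_D^*\mathbf 1=|X|\,E_D\hat x,
\end{align*}
follow immediately by comparing entries, and they exhibit $A_D$ and $A_D^*$ as operators that translate between the two preferred bases of $W$.

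For the non-vanishing of $Q_{j,D}$, I would argue by contradiction. If $Q_{j,D}=0$ for some $j$, then
\begin{align*}
|X|\,E_D\hat x=A_D^*\mathbf 1=\sum_{i=0}^D Q_{i,D}E_i^*\mathbf 1=\sum_{i\neq j}Q_{i,D}E_i^*\mathbf 1.
\end{align*}
Apply $P(A^*):=\prod_{k\neq j}(A^*-\theta_k^*I)\in\langle A^*\rangle$ to both sides: each $E_i^*\mathbf 1$ with $i\neq j$ is killed by the factor $(A^*-\theta_i^*I)$ in $P(A^*)$, so the right-hand side vanishes. On the left, $P(A^*)|_W$ is a nonzero element of $\langle A^*\rangle|_W$, since it is a polynomial of degree $D$ in $A^*|_W$ while the minimal polynomial of $A^*|_W$ has degree $D+1$ (all $D+1$ distinct values $\theta_0^*,\ldots,\theta_D^*$ occur as eigenvalues of $A^*|_W$). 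Because $E_D\hat x\neq 0$, Corollary~\ref{lem:injd}(ii) applied with $w=E_D\hat x$ then forces $P(A^*)E_D\hat x\neq 0$, a contradiction; hence $Q_{j,D}\neq 0$ and $A_D^*$ is invertible. The argument for $P_{j,D}\neq 0$ is the mirror image: expand $E_D^*\mathbf 1=\sum_i P_{i,D}E_i\hat x$, suppose $P_{j,D}=0$, apply $\prod_{k\neq j}(A-\theta_kI)\in\langle A\rangle$ to both sides, and invoke Lemma~\ref{lem:inj}(ii) with $w=E_D^*\mathbf 1$ to reach a contradiction. The main obstacle is the preliminary setup on the primary module; once the thinness and the tridiagonal-pair structure on $W$ are in hand, the proof is essentially a one-line application of the injectivity lemmas to the two pivotal identities $A_D\hat x=E_D^*\mathbf 1$ and $A_D^*\mathbf 1=|X|E_D\hat x$.
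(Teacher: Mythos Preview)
Your proof is correct. The paper itself does not prove this lemma; it simply cites \cite[Section~1]{mamart}. The classical argument one finds there (or in the standard references) is purely orthogonal-polynomial theoretic: writing $A_D=v_D(A)$ for the $D$th polynomial in the three-term recurrence $c_{i+1}v_{i+1}=(\lambda-a_i)v_i-b_{i-1}v_{i-1}$, one knows that the zeros of $v_D$ interlace the eigenvalues $\theta_0,\ldots,\theta_D$ (which are the zeros of the degree-$(D{+}1)$ polynomial in the sequence), so $P_{j,D}=v_D(\theta_j)\neq 0$ for every $j$; the $Q$-polynomial structure gives the dual statement for $Q_{j,D}$.

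Your route is genuinely different: you work on the primary $T$-module and exploit the injectivity statements of Lemma~\ref{lem:inj} and Corollary~\ref{lem:injd} on the Leonard system there. The argument is clean and entirely self-contained within the paper's tridiagonal-pair framework. Two small remarks. First, your proof implicitly uses facts about the primary module (irreducibility, thinness, tridiagonal-system structure) that the paper states only later, in Section~5 (Lemma~\ref{lem:WTD}, Example~\ref{ex:primary}); there is no circularity, since those facts do not depend on Lemma~\ref{lem:ADinv}, but you should flag the forward reference or note that the primary-module case is elementary enough to be established directly (as you sketch). Second, your use of the injectivity lemmas is not mere convenience but actually essential: the naive shortcut ``$P(A^*)=c_jE_j^*$, so $P(A^*)E_D\hat x=c_jE_j^*E_D\hat x$, and this is nonzero'' fails, because $E_j^*E_D\hat x=(Q_{j,D}/|X|)E_j^*\mathbf 1$, which is exactly what you are trying to show is nonzero. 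The injectivity on $E_DW$ sidesteps this circularity neatly.
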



\section{Modules for the subconstituent algebra $T$}
We continue to discuss the $Q$-polynomial distance-regular graph $\Gamma=(X, \mathcal R)$. Throughout this section, we fix $x \in X$
and write $T=T(x)$.  We discuss how the standard module $V$ is an orthogonal direct sum of irreducible $T$-modules.
We explain how the pair $A, A^*$ acts on each irreducible $T$-module as a tridiagonal pair.
\medskip

\noindent For convenience, we adopt the following convention.  By a {\it $T$-module}, we mean a $T$-submodule of $V$.
\medskip

\noindent In the next few paragraphs, we recall some concepts from linear algebra. 
\medskip

\noindent Let $W$ denote a subspace of $V$. Define the subspace
\begin{align*}
W^\perp = \lbrace v \in V \vert \langle v,w \rangle=0 \rbrace.
\end{align*}
By linear algebra, 
\begin{align*}
V = W + W^\perp \qquad \qquad \hbox{(orthogonal direct sum).}
\end{align*}
We call $W^\perp$ the {\it orthogonal complement of $W$ in $V$}. 
\medskip

\noindent Let $W$ denote a subspace of $V$, and let $U$ denote a subspace of $W$. By linear algebra, 
\begin{align*}
W = U + U^\perp \cap W \qquad \qquad \hbox{(orthogonal direct sum).}
\end{align*}
\noindent We call $U^\perp \cap W$ the {\it orthogonal complement of $U$ in $W$}.
\medskip

\noindent  Let $W$ denote a $T$-module, and let $U$ denote a $T$-submodule of $W$.  Using \eqref{eq:AABil} we routinely find that the
 orthogonal complement of $U$ in $W$ is a $T$-module.
 
\begin{definition} \rm A $T$-module $W$ is said to be {\it irreducible} whenever $W \not=0$ and
$W$ does not contain a $T$-module besides $0$ and $W$.
\end{definition}

\begin{lemma} \label{lem:ModODS} {\rm (See \cite[Section~1, Lemma~3.4]{tSub1}.)} Every $T$-module is an orthogonal direct sum of irreducible $T$-modules.
In particular, the standard $T$-module $V$ is an orthogonal direct sum of irreducible $T$-modules.
\end{lemma}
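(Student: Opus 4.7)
The plan is to proceed by induction on the dimension of the $T$-module, using the fact, already noted in the paragraph preceding the statement, that the orthogonal complement of a $T$-submodule inside a $T$-module is again a $T$-module. The second assertion then follows at once by applying the first to $W = V$, since $V$ is a $T$-module with respect to its action by left multiplication.

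First I would record the base case: if $W = 0$ there is nothing to prove (the empty direct sum), and if $W$ is irreducible then $W$ itself is the required decomposition. For the inductive step, suppose $W$ is a nonzero $T$-module that is not irreducible. Then by the definition of irreducibility, $W$ contains a $T$-submodule $U$ with $0 \neq U \neq W$. Forming the orthogonal complement of $U$ in $W$, we obtain
\[
W = U + U^{\perp} \cap W \qquad \text{(orthogonal direct sum)},
\]
and by the remark recalled above, $U^{\perp} \cap W$ is a $T$-module. Both summands have strictly smaller dimension than $W$, so the inductive hypothesis applies: each decomposes as an orthogonal direct sum of irreducible $T$-modules, and concatenating these two decompositions gives the desired decomposition of $W$.

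The only step that requires a little care is the invocation of the remark that $U^{\perp} \cap W$ is $T$-invariant. This rests on the observation that $T$ is closed under the adjoint operation $B \mapsto \overline{B}^{\rm t}$ with respect to the Hermitean form $\langle\,,\,\rangle$: indeed, $T$ is generated by $A$ and $A^{*}$, both of which are real symmetric (hence self-adjoint), so the algebra they generate is self-adjoint as well. Combined with \eqref{eq:AABil}, this yields the $T$-invariance of orthogonal complements, which is exactly what the induction needs. I expect this to be the only nontrivial point; everything else is a routine dimension induction.
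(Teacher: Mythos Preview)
Your proof is correct and is exactly the standard argument one expects here. The paper itself does not supply a proof of this lemma; it simply cites \cite[Section~1, Lemma~3.4]{tSub1}, and the argument there is the same induction on dimension using the $T$-invariance of orthogonal complements that you have written out.
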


\noindent We recall the notion of isomorphism for $T$-modules. Let $W$ and $W'$ denote  $T$-modules. By a {\it $T$-module isomorphism from $W$ to $W'$}, we mean
a $\mathbb C$-linear bijection $\sigma : W \to W'$ such that $ \sigma B =B \sigma $ on $W$ for all $B\in T$. The $T$-modules $W$ and $W'$ are called {\it isomorphic} whenever
there exists a $T$-module isomorphism from $W$ to $W'$.

\begin{lemma} \label{lem:orth} {\rm (See \cite[Lemma~3.3]{curtin}.)} Nonisomorphic irreducible $T$-modules are orthogonal.
\end{lemma}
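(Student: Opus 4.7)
The plan is to exploit the fact that $T$ is closed under the conjugate-transpose map $B \mapsto \overline{B}^{\mathrm t}$, so that orthogonal projection onto a $T$-submodule is itself $T$-equivariant; the claim will then follow from a Schur-type argument.

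First I would observe that $T$ is closed under $B \mapsto \overline{B}^{\mathrm t}$. Indeed, $T$ is generated by $A$ and $A^*$, and by the discussion in Section 4 both $A$ and $A^*$ are real symmetric, hence fixed by $B \mapsto \overline{B}^{\mathrm t}$. Consequently, for any $T$-submodule $U \subseteq V$ the orthogonal complement $U^\perp$ is again a $T$-submodule, by \eqref{eq:AABil}, and therefore the orthogonal projection $\pi_U : V \to V$ onto $U$ along $U^\perp$ satisfies $\pi_U B = B \pi_U$ for every $B \in T$.

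Now let $W, W' \subseteq V$ be irreducible $T$-modules which are not orthogonal, and let $\sigma : W \to W'$ denote the restriction to $W$ of the orthogonal projection onto $W'$. By the previous paragraph $\sigma$ is a $T$-module homomorphism. Its kernel $\ker \sigma$ is a $T$-submodule of $W$, so by irreducibility of $W$ we have $\ker \sigma = 0$ or $\ker \sigma = W$. The case $\ker \sigma = W$ would force $\langle w, w'\rangle = \langle \sigma(w), w'\rangle = 0$ for all $w \in W$ and all $w' \in W'$, contradicting our assumption that $W \not\perp W'$. Hence $\sigma$ is injective. Its image is a nonzero $T$-submodule of the irreducible module $W'$, so the image equals $W'$, and $\sigma$ is a $T$-module isomorphism $W \to W'$.

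Contrapositively, if $W$ and $W'$ are nonisomorphic irreducible $T$-modules then they are orthogonal. The main (and only real) obstacle is verifying the self-adjointness of $T$; once that is in place, the rest is the standard Schur-type argument applied via orthogonal projection, so I do not anticipate any further difficulty.
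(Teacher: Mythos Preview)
Your proof is correct. The paper does not give its own proof of this lemma but merely cites \cite[Lemma~3.3]{curtin}; the standard argument in that setting is precisely the Schur-type argument you supply, using that $T$ is closed under conjugate-transpose (since its generators $A,A^*$ are real symmetric) so that orthogonal projection onto a $T$-submodule is $T$-equivariant.
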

\medskip

\noindent Let $W$ denote an irreducible $T$-module. We mention some parameters attached to $W$.
By linear algebra, $W$ is the orthogonal direct sum of the nonzero subspaces among $\lbrace E^*_iW\rbrace_{i=0}^D$.
By the {\it endpoint} of $W$ we mean
\begin{align*}
{\rm min}\lbrace i \vert 0 \leq i \leq D, \; E^*_iW \not=0 \rbrace.
\end{align*}
\noindent By the {\it diameter} of $W$, we mean
\begin{align*}
\bigl \vert \lbrace i \vert 0 \leq i \leq D, \; E^*_iW \not=0 \rbrace  \bigr\vert -1.
\end{align*}
\noindent  Note that $W$ is the orthogonal direct sum of the nonzero subspaces among $\lbrace E_iW\rbrace_{i=0}^D$.
By the {\it dual endpoint} of $W$ we mean
\begin{align*}
{\rm min}\lbrace i \vert 0 \leq i \leq D, \; E_iW \not=0 \rbrace.
\end{align*}
\noindent By the {\it dual diameter} of $W$, we mean
\begin{align*}
\bigl \vert \lbrace i \vert 0 \leq i \leq D, \; E_iW \not=0 \rbrace  \bigr\vert -1.
\end{align*}

\begin{lemma} \label{lem:rtd} {\rm (See \cite[Lemmas~3.9, 3.12]{tSub1}.)} Let $W$ denote an irreducible $T$-module, with endpoint $r$, dual endpoint $t$, diameter $\delta$, and dual diameter $d$.
Then:
\begin{enumerate}
\item[\rm (i)] $t,d$ are nonnegative and $t+d\leq D$;
\item[\rm (ii)] $E_iW \not=0$ if and only if $ t \leq i \leq t+d$ $(0 \leq i \leq D)$;
\item[\rm (iii)] $r, \delta$ are nonnegative and $r+\delta\leq D$;
\item[\rm (iv)] $E^*_iW \not=0$ if and only if $ r \leq i \leq r+\delta$ $(0 \leq i \leq D)$.
\end{enumerate}
\end{lemma}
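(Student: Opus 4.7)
The plan is to prove (iii)--(iv) first and then obtain (i)--(ii) by the dual argument. The key input is the triple product relation $E^*_i A E^*_j = 0$ whenever $\vert i-j\vert >1$, recalled in the previous section; inserting $I = \sum_{\ell=0}^D E^*_\ell$ on both sides of $A$ gives
\begin{align*}
A E^*_j W \subseteq E^*_{j-1}W + E^*_j W + E^*_{j+1} W \qquad (0 \leq j \leq D),
\end{align*}
with the convention $E^*_{-1}=E^*_{D+1}=0$.

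For (iv), let $S = \lbrace i \mid 0 \leq i \leq D,\; E^*_iW \not= 0\rbrace$. Since $W$ is the sum of the $E^*_iW$ and $W\not=0$, $S$ is nonempty; let $r = \min S$, so $r\geq 0$, and define $\delta$ by $r+\delta = \max S$, so $\delta \geq 0$ and $r+\delta \leq D$. I claim $S = \lbrace r, r+1, \ldots, r+\delta\rbrace$. Suppose to the contrary that $S$ has a gap, and pick $i$ with $r \leq i < r+\delta$, $i \in S$, $i+1 \notin S$. Set
\begin{align*}
W_1 = \sum_{j=0}^{i} E^*_j W.
\end{align*}
Then $W_1$ is $M^*$-invariant because each $E^*_jW$ is, and $W_1$ is $A$-invariant by the displayed inclusion together with $E^*_{i+1}W = 0$. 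Since $T$ is generated by $A$ and $M^*$, $W_1$ is a $T$-submodule of $W$. But $0 \not= W_1$ (it contains $E^*_rW$) and $W_1 \not= W$ (it omits $E^*_{r+\delta}W$), contradicting irreducibility. Hence $S$ is contiguous, proving (iv) and (iii).

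For (i) and (ii), the same argument works verbatim with the roles of $A,A^*$ and $E_i, E^*_i$ swapped: the triple product relation $E_i A^* E_j = 0$ for $\vert i-j\vert >1$ gives $A^* E_jW \subseteq E_{j-1}W + E_jW + E_{j+1}W$, and an irreducibility argument applied to $\sum_{j=0}^i E_jW$ shows that $\lbrace i \mid E_iW \not=0\rbrace$ is a contiguous subinterval of $\lbrace 0,1,\ldots,D\rbrace$. Setting $t = \min$ and $t+d = \max$ of this set gives (i) and (ii).

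There is no serious obstacle; the only point that needs a little care is recognizing that one really does need the subconstituent-algebra version of the triple product relations (rather than some abstract tridiagonal-pair statement) to invoke that $T=\langle M, M^*\rangle$, so that $A$-invariance plus $M^*$-invariance suffices for $T$-invariance.
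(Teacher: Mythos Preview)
Your argument is correct and is essentially the standard proof. The paper itself does not supply a proof of this lemma; it simply cites \cite[Lemmas~3.9, 3.12]{tSub1}, and the argument there is precisely the one you give: use the triple product relations to see that $A$ moves $E^*_jW$ only into adjacent blocks, so a gap in the support set $S=\{i\mid E^*_iW\neq 0\}$ would produce a proper nonzero $T$-submodule $\sum_{j\leq i}E^*_jW$.

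One small notational point worth tightening: the paper defines the diameter $\delta$ as $|S|-1$, whereas you define it as $\max S - \min S$. These agree only \emph{after} you have shown $S$ is an interval, so strictly speaking you should introduce a temporary name for $\max S - \min S$, prove contiguity, and then conclude it equals the diameter as defined. This is cosmetic and does not affect the validity of the argument.
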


\begin{lemma} \label{lem:AAW} {\rm (See \cite[Lemmas~3.9, 3.12]{tSub1}.)} Let $W$ denote an irreducible $T$-module, with endpoint $r$, dual endpoint $t$, diameter $\delta$, and dual diameter $d$.
\begin{enumerate}
\item[\rm (i)]  Define $W_i = E_{t+i} W$ for $0 \leq i \leq d$. Then $\lbrace W_i \rbrace_{i=0}^d$ is an ordering of the eigenspaces of $A$ on $W$. Moreover
\begin{align*}
        A^* W_i \subseteq W_{i-1} + W_i + W_{i+1} \qquad \quad (0 \leq i \leq d),
\end{align*}
where $W_{-1}=0$ and $W_{d+1}=0$.
\item[\rm (ii)] Define $W^*_i = E^*_{r+i} W$ for $0 \leq i \leq \delta$. Then $\lbrace W^*_i \rbrace_{i=0}^\delta$ is an ordering of the eigenspaces of $A^*$ on $W$. Moreover
\begin{align*}
        A W^*_i \subseteq W^*_{i-1} + W^*_i + W^*_{i+1} \qquad \quad (0 \leq i \leq \delta),
\end{align*}
where $W^*_{-1}=0$ and $W^*_{\delta+1}=0$.
\end{enumerate}
\end{lemma}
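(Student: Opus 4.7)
The plan is to deduce both parts from the triple-product relations $E_jA^*E_i = 0$ for $|i-j|>1$ (and its dual) recalled in Section 4, together with Lemma \ref{lem:rtd} describing which $E_iW$ and $E^*_iW$ are nonzero. The key fact used throughout is that, because $W$ is a $T$-module, it is preserved by every $E_j \in M$, every $E^*_j \in M^*$, and by $A$ and $A^*$.

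For (i), first note that $E_{t+i}W \subseteq E_{t+i}V$ lies in the $\theta_{t+i}$-eigenspace of $A$, so each $W_i$ is an eigenspace of $A|_W$. Applying $I = \sum_{j=0}^D E_j$ to vectors of $W$ and using $E_jW \subseteq W$ yields $W = \sum_{j=0}^D E_jW$, and this sum is direct because $V = \bigoplus_j E_jV$. By Lemma \ref{lem:rtd}(ii), $E_jW \neq 0$ precisely for $t \leq j \leq t+d$, so $W = \bigoplus_{i=0}^d W_i$ with the $\theta_{t+i}$ mutually distinct; this gives the claimed eigenspace ordering. For the three-term action, expand
\begin{align*}
A^* W_i = \sum_{j=0}^D E_j A^* E_{t+i} W
\end{align*}
and invoke the triple-product relation to kill every summand with $|j-(t+i)|>1$; the three remaining terms lie in $E_{t+i-1}V + E_{t+i}V + E_{t+i+1}V$. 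Intersecting with $W$ and using the direct-sum decomposition $W = \bigoplus_j E_jW$ established above, this becomes $E_{t+i-1}W + E_{t+i}W + E_{t+i+1}W = W_{i-1} + W_i + W_{i+1}$, with the convention $W_{-1}=0$ and $W_{d+1}=0$ enforced by Lemma \ref{lem:rtd}(ii).

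Part (ii) follows by the symmetric argument, swapping the roles of $A, A^*$ and of $E_j, E^*_j$, and using $E^*_jAE^*_i=0$ for $|i-j|>1$ together with Lemma \ref{lem:rtd}(iv); no new ideas are required. The one step that requires genuine care, and is the main (small) obstacle, is the passage from $A^*W_i \subseteq E_{t+i-1}V + E_{t+i}V + E_{t+i+1}V$ to the corresponding inclusion inside $W$. This relies on the direct-sum decomposition $W = \bigoplus_j E_jW$, which is why the plan is to establish that decomposition at the outset and only then tackle the tridiagonal bound.
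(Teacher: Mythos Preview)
Your argument is correct. The paper does not supply its own proof of this lemma; it simply cites \cite[Lemmas~3.9, 3.12]{tSub1}. Your approach---using the triple-product relations $E_jA^*E_i=0$ for $|i-j|>1$ together with Lemma~\ref{lem:rtd} and the $T$-invariance of $W$---is exactly the standard argument from that reference.

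One small simplification: the step you flag as delicate (passing from $A^*W_i \subseteq E_{t+i-1}V + E_{t+i}V + E_{t+i+1}V$ to the same statement inside $W$) can be bypassed entirely. Since $A^*E_{t+i}W \subseteq W$ by $T$-invariance, applying $E_j$ gives $E_jA^*E_{t+i}W \subseteq E_jW$ directly, so each surviving summand already lands in $E_jW$ without any intersection argument. But your route via the direct-sum decomposition is also valid.
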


\begin{lemma} \label{lem:WTD} {\rm (See \cite[Example~1.4]{NsomeAlg}, \cite[Corollary~3.3]{pasc}.)} Let $W$ denote an irreducible $T$-module, with endpoint $r$, dual endpoint $t$, diameter $\delta$, and dual diameter $d$. Then:
\begin{enumerate}
\item[\rm (i)] the pair $A, A^*$ acts on $W$ as a tridiagonal pair;
\item[\rm (ii)] $d=\delta$;
\item[\rm (iii)]  the sequence $(A; \lbrace E_{t+i} \rbrace_{i=0}^d; A^*; \lbrace E^*_{r+i} \rbrace_{i=0}^d)$ acts on $W$ as a tridiagonal system.
\end{enumerate}
\end{lemma}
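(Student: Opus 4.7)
The plan is to verify the four axioms of Definition \ref{def:TDP} directly for the restrictions of $A$ and $A^*$ to $W$, using Lemmas \ref{lem:rtd} and \ref{lem:AAW} to handle the bulk of the work, and then to read off parts (ii), (iii) as consequences.

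First I would set up the eigenspace picture on $W$. Since $A$ is symmetric real and $A^*$ is Hermitian, each is diagonalizable on $V$; because $W$ is $T$-invariant, hence invariant under both $A$ and $A^*$, each restriction $A|_W$ and $A^*|_W$ is diagonalizable. Next, since $E_i \in T$ we have $E_iW \subseteq W$, and $A$ acts as $\theta_i$ on $E_iW$; combined with $I=\sum_{i=0}^D E_i$, this forces the nonzero $E_iW$ to be precisely the eigenspaces of $A|_W$, with $E_i$ restricting to the corresponding primitive idempotent. Lemma \ref{lem:rtd}(ii) identifies the nonzero ones as $E_tW, E_{t+1}W, \dots, E_{t+d}W$, and the analogous statement using Lemma \ref{lem:rtd}(iv) gives the eigenspaces of $A^*|_W$ as $E^*_rW, \ldots, E^*_{r+\delta}W$. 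This verifies Definition \ref{def:TDP}(i) and simultaneously sets up the candidate standard orderings.

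Next I would check the tridiagonal conditions (ii), (iii) of Definition \ref{def:TDP}. With $W_i=E_{t+i}W$ and $W^*_i = E^*_{r+i}W$, Lemma \ref{lem:AAW}(i) gives $A^*W_i \subseteq W_{i-1}+W_i+W_{i+1}$ with $W_{-1}=W_{d+1}=0$, which is exactly \eqref{eq:sum1}; Lemma \ref{lem:AAW}(ii) gives the analogous \eqref{eq:sum2}. For axiom (iv), suppose $U \subseteq W$ is invariant under both $A|_W$ and $A^*|_W$. Since $T$ is generated by $A,A^*$, the subspace $U$ is a $T$-submodule of $W$; irreducibility of $W$ then forces $U=0$ or $U=W$. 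So all four axioms hold, proving (i).

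For part (ii), we have now established that $A,A^*$ act on $W$ as a tridiagonal pair. Then, as noted in the paragraph after Definition \ref{def:TDS}, the two diameters from \eqref{eq:sum1} and \eqref{eq:sum2} must coincide by \cite[Lemma~4.5]{NsomeAlg}; this common value is $d=\delta$. For part (iii), the orderings $\{E_{t+i}|_W\}_{i=0}^d$ and $\{E^*_{r+i}|_W\}_{i=0}^d$ are standard orderings of the primitive idempotents of $A|_W$ and $A^*|_W$ by the tridiagonal relations just verified, so the sequence $(A;\{E_{t+i}\}_{i=0}^d;A^*;\{E^*_{r+i}\}_{i=0}^d)$ satisfies every clause of Definition \ref{def:TDS} when viewed on $W$.

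The only real subtlety, and the point I would be most careful about, is the bookkeeping that identifies the $E_i$ (and $E^*_i$), which are idempotents acting on $V$, with primitive idempotents of $A|_W$ (and $A^*|_W$) in the sense required by Definition \ref{def:TDS}. Once one observes that $E_i$ restricts to the identity on $E_iW$ and to zero on $E_jW$ for $j\neq i$, this identification is immediate and everything else is a direct appeal to the lemmas already in place.
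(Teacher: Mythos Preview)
Your proposal is correct. The paper does not spell out a proof of this lemma at all---it merely cites \cite[Example~1.4]{NsomeAlg} and \cite[Corollary~3.3]{pasc}---and your argument is precisely the natural unpacking of those references via the already-stated Lemmas \ref{lem:rtd} and \ref{lem:AAW}, together with the general fact $d=\delta$ from \cite[Lemma~4.5]{NsomeAlg}; this is the standard route.
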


\noindent  Let $W$ denote an irreducible $T$-module. We have seen that $A, A^*$ act on $W$ as a tridiagonal pair. In the next few results, we apply
the theory of tridiagonal pairs to $W$.

\begin{lemma} \label{lem:shape} {\rm (See \cite[Corollary~5.7]{NsomeAlg}.)} Let $W$ denote an irreducible $T$-module, with endpoint $r$, dual endpoint $t$, and diameter $d$.
Then for $0 \leq i \leq d$ the following subspaces have the same dimension:
\begin{align}
 E_{t+i}W, \qquad E_{t+d-i}W, \qquad 
E^*_{r+i}W, \qquad E^*_{r+d-i}W. \label{eq:4sub}
\end{align}
\end{lemma}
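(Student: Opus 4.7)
The plan is to reduce the statement directly to the tridiagonal-system fact quoted just after Definition \ref{def:TDS}, namely that for any tridiagonal system $\Phi = (A; \{E_i\}_{i=0}^d; A^*; \{E^*_i\}_{i=0}^d)$ on a vector space $V$, the four subspaces $E_iV$, $E_{d-i}V$, $E^*_iV$, $E^*_{d-i}V$ share a common dimension $\rho_i$ (the $i$th entry of the shape of $\Phi$). Since this fact was already recorded (with citation to \cite[Corollary~5.7]{NsomeAlg}), the only real task is to transport it from $V$ to $W$ via the tridiagonal system supplied by Lemma \ref{lem:WTD}.

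First I would invoke Lemma \ref{lem:WTD}(iii): the sequence
\begin{align*}
\Phi_W = (A; \{E_{t+i}\}_{i=0}^d; A^*; \{E^*_{r+i}\}_{i=0}^d)
\end{align*}
acts on $W$ as a tridiagonal system of diameter $d$. Here the $i$th eigenspace of $A$ (viewed as an operator on $W$) is $E_{t+i}W$, since the restriction of $E_{t+i}$ to $W$ is precisely the primitive idempotent of $A|_W$ corresponding to the eigenvalue $\theta_{t+i}$; similarly the $i$th eigenspace of $A^*|_W$ is $E^*_{r+i}W$.

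Next I would apply the shape fact to the tridiagonal system $\Phi_W$ on $W$. Translating the indices through the shifts by $t$ (for $A$) and $r$ (for $A^*$), the four subspaces that must share a common dimension become
\begin{align*}
E_{t+i}W, \qquad E_{t+d-i}W, \qquad E^*_{r+i}W, \qquad E^*_{r+d-i}W,
\end{align*}
which is exactly the content of \eqref{eq:4sub}. This proves the lemma.

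There is no real obstacle: the work was done in Section 3, and Lemma \ref{lem:WTD} is precisely the bridge that lets one apply abstract tridiagonal-pair theory to irreducible $T$-modules. The only point worth a sentence of care is the identification of the eigenspaces of $A|_W$ and $A^*|_W$ with $E_{t+i}W$ and $E^*_{r+i}W$ respectively, which is immediate from the fact that $E_j$ and $E^*_j$ commute with the projection onto $W$ (because $W$ is $T$-invariant and both $E_j, E^*_j \in T$).
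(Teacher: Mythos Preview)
Your proof is correct and is exactly the intended argument: the paper does not give a separate proof of this lemma but simply cites \cite[Corollary~5.7]{NsomeAlg}, and your reduction via Lemma~\ref{lem:WTD}(iii) is precisely how that citation is meant to be unpacked in the present setting. There is nothing to add.
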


\begin{definition}\label{def:Wshape} \rm We refer to the irreducible $T$-module $W$ from Lemma \ref{lem:shape}. For $0 \leq i \leq d$ let $\rho_i$ denote
the common dimension of the four subspaces \eqref{eq:4sub}. By construction $\rho_i \not=0$ and $\rho_i=\rho_{d-i}$. We call the sequence $\lbrace \rho_i \rbrace_{i=0}^d$ the {\it shape} of $W$.
\end{definition}

\begin{lemma} \label{lem:shapeFacts} {\rm (See \cite[Corollary~6.6]{NsomeAlg}.)} Let $W$ denote an irreducible $T$-module, with diameter $d$ and shape  $\lbrace \rho_i \rbrace_{i=0}^d$. Then
 $\rho_{i-1} \leq  \rho_i$ for $1 \leq i \leq d/2$.
\end{lemma}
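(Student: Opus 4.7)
The plan is to reduce this statement to the corresponding monotonicity result for the shape of an abstract tridiagonal system, which was already recorded in Section 3 (citing \cite[Corollary~6.6]{NsomeAlg}). In other words, the content is a translation from the $T$-module language back to the tridiagonal system language that is tailored to $W$.

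First I would invoke Lemma \ref{lem:WTD}(iii) to identify the explicit tridiagonal system on $W$: namely,
\begin{align*}
\Phi_W = (A; \lbrace E_{t+i} \rbrace_{i=0}^d; A^*; \lbrace E^*_{r+i} \rbrace_{i=0}^d),
\end{align*}
acting on $W$ with diameter $d$. Next I would match Definition \ref{def:Wshape} against the definition of the shape of a tridiagonal system given just after Definition \ref{def:TDS}. For the generic tridiagonal system the shape is the common dimension $\rho_i$ of the four subspaces $E_iV$, $E_{d-i}V$, $E^*_iV$, $E^*_{d-i}V$; when $\Phi_W$ is viewed as a tridiagonal system on $W$, its primitive idempotents restrict to act on $W$ via the projectors $E_{t+i}$ and $E^*_{r+i}$, and the corresponding four subspaces become precisely the ones in \eqref{eq:4sub}. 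Thus the shape of the tridiagonal system $\Phi_W$ acting on $W$ coincides with the shape $\lbrace \rho_i \rbrace_{i=0}^d$ of $W$ in the sense of Definition \ref{def:Wshape}.

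Finally I would apply the general inequality $\rho_{i-1}\leq \rho_i$ for $1 \leq i \leq d/2$, which was stated in Section 3 as a consequence of \cite[Corollary~6.6]{NsomeAlg} for tridiagonal systems, to $\Phi_W$. This yields the claim. I do not anticipate a real obstacle here: the only thing to verify is the bookkeeping that the eigenspace decomposition of $A$ (resp.\ $A^*$) on $W$ really is indexed by $0,1,\ldots,d$ via the shift $i \mapsto t+i$ (resp.\ $i \mapsto r+i$), so that the two notions of shape literally agree; this is immediate from Lemma \ref{lem:rtd} and Lemma \ref{lem:AAW}.
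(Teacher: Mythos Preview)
Your proposal is correct and matches the paper's approach: the paper does not give an independent proof but simply cites \cite[Corollary~6.6]{NsomeAlg}, which is exactly the tridiagonal-system shape inequality you invoke after identifying $\Phi_W$ via Lemma~\ref{lem:WTD}(iii). Your additional bookkeeping (matching Definition~\ref{def:Wshape} to the shape of $\Phi_W$) just makes explicit the translation that the paper leaves implicit.
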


\begin{lemma} \label{lem:WSP} {\rm (See \cite[Theorem 4.6, Corollary 5.7]{NsomeAlg}.)} Let $W$ denote an irreducible $T$-module, with endpoint $r$, dual endpoint $t$, and diameter $d$.
Then the following sum is direct:
\begin{align*}
W=\sum_{i=0}^d \Bigl( (E^*_rW + \cdots + E^*_{r+i}W ) \cap (E_tW+\cdots + E_{t+d-i}W) \Bigr).
\end{align*}
Moreover, for $0 \leq i \leq d$ the $i$th summand has dimension $\rho_i$, where $\lbrace \rho_i \rbrace_{i=0}^d$ is the shape of $W$.
\end{lemma}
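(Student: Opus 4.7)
The plan is to recognize that the statement is a direct translation, to the irreducible $T$-module $W$, of the tridiagonal-system facts about the subspaces $U_i$ that were recorded just before Lemma \ref{lem:inj}.

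First I would invoke Lemma \ref{lem:WTD}, which says the pair $A,A^*$ acts on $W$ as a tridiagonal pair and moreover the sequence
\begin{align*}
\Phi_W = (A; \lbrace E_{t+i}\rbrace_{i=0}^d; A^*; \lbrace E^*_{r+i}\rbrace_{i=0}^d)
\end{align*}
acts on $W$ as a tridiagonal system of diameter $d$. Call the ambient vector space for this tridiagonal system simply $W$, and let $\lbrace F_i\rbrace_{i=0}^d$ and $\lbrace F^*_i\rbrace_{i=0}^d$ denote the standard orderings of its primitive idempotents, so that $F_i$ acts on $W$ as $E_{t+i}$ (restricted) and $F^*_i$ acts on $W$ as $E^*_{r+i}$ (restricted). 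In particular, $F_iW = E_{t+i}W$ and $F^*_iW = E^*_{r+i}W$ for $0\leq i\leq d$.

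Next I would apply to $\Phi_W$ the facts cited from \cite[Theorem~4.6, Corollary~5.7]{NsomeAlg} that were already stated in the discussion around equation \eqref{eq:Ri}. For the tridiagonal system $\Phi_W$, define
\begin{align*}
U_i = (F^*_0W + F^*_1W + \cdots + F^*_iW) \cap (F_0W + F_1W + \cdots + F_{d-i}W) \qquad (0 \leq i \leq d).
\end{align*}
By those cited results, the sum $W=\sum_{i=0}^d U_i$ is direct and $\dim U_i$ equals the $i$th entry of the shape of $\Phi_W$. Translating back using $F^*_jW = E^*_{r+j}W$ and $F_jW = E_{t+j}W$, the subspace $U_i$ is precisely
\begin{align*}
(E^*_rW + \cdots + E^*_{r+i}W) \cap (E_tW + \cdots + E_{t+d-i}W),
\end{align*}
which is the $i$th summand in the statement of the lemma.

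Finally, I would observe that the shape of $\Phi_W$ coincides with the shape $\lbrace \rho_i\rbrace_{i=0}^d$ of $W$ in the sense of Definition \ref{def:Wshape}: both are the common dimension of the four subspaces listed in Lemma \ref{lem:shape}, which by construction are the same as $F_iW, F_{d-i}W, F^*_iW, F^*_{d-i}W$. Hence $\dim U_i = \rho_i$, giving the dimension statement. No real obstacle is expected; the only subtlety is bookkeeping to ensure that the standard orderings given by Lemma \ref{lem:WTD} match the orderings implicitly used in the tridiagonal-pair results that are being quoted, and that the shift by $r$ (resp.\ $t$) is applied consistently to the dual primitive idempotents (resp.\ primitive idempotents).
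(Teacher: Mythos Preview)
Your proposal is correct and matches the paper's approach: the paper states Lemma~\ref{lem:WSP} with only the citation to \cite[Theorem~4.6, Corollary~5.7]{NsomeAlg} and no explicit proof, relying on the reader to make exactly the translation you describe via Lemma~\ref{lem:WTD}. Your write-up simply makes that translation explicit, and the bookkeeping you flag (matching orderings and shifting indices by $r$ and $t$) is the only content.
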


\begin{definition}\label{def:TDthin} \rm (See \cite[Definition~3.5]{tSub1}.) Let $W$ denote an irreducible $T$-module, with diameter $d$ and shape $\lbrace \rho_i \rbrace_{i=0}^d$. Then $W$ is called {\it thin} whenever
$\rho_i=1$ for $0 \leq i \leq d$. 
\end{definition}

\begin{lemma} \label{lem:THandLS} Let $W$ denote an irreducible $T$-module. Then $W$ is thin if and only if the tridiagonal system in Lemma \ref{lem:WTD}(iii)  is a Leonard system.
\end{lemma}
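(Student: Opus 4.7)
The plan is to observe that this lemma is essentially a matter of unpacking definitions once the shapes are matched up. Let $\Phi_W$ denote the tridiagonal system $(A; \lbrace E_{t+i} \rbrace_{i=0}^d; A^*; \lbrace E^*_{r+i} \rbrace_{i=0}^d)$ from Lemma \ref{lem:WTD}(iii), viewed as acting on $W$. I would first point out that the eigenspaces of $A$ for its action on $W$ are precisely $\lbrace E_{t+i}W \rbrace_{i=0}^d$, and the eigenspaces of $A^*$ for its action on $W$ are precisely $\lbrace E^*_{r+i}W \rbrace_{i=0}^d$ (this is the content of Lemma \ref{lem:AAW} combined with Lemma \ref{lem:rtd}(ii),(iv)).

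Next, I would compare the two notions of shape. By the discussion following Definition \ref{def:TDS}, the shape of the tridiagonal system $\Phi_W$ is the sequence $\lbrace \rho_i \rbrace_{i=0}^d$, where $\rho_i$ is the common dimension of the four eigenspaces $E_{t+i}W$, $E_{t+d-i}W$, $E^*_{r+i}W$, $E^*_{r+d-i}W$ coming from $\Phi_W$. But by Definition \ref{def:Wshape}, the shape of the irreducible $T$-module $W$ is defined to be the common dimension of exactly those same four subspaces. Hence the shape of $\Phi_W$ and the shape of $W$ coincide term by term.

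Finally, I would close the argument by unpacking the two conditions. By Definition \ref{def:TDthin}, $W$ is thin if and only if $\rho_i = 1$ for $0 \leq i \leq d$, where $\lbrace \rho_i \rbrace_{i=0}^d$ is the shape of $W$. On the other hand, by the definition of Leonard system given just after Lemma \ref{lem:shape}'s analogue in the tridiagonal-system discussion (the sentence following the introduction of the shape of $\Phi$), $\Phi_W$ is a Leonard system if and only if $\rho_i = 1$ for $0 \leq i \leq d$, where $\lbrace \rho_i \rbrace_{i=0}^d$ is now the shape of $\Phi_W$. Since the two shapes coincide, the two conditions are equivalent, proving the lemma.

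There is no real obstacle here; the statement is essentially a tautology once one observes that the shape defined for $T$-modules and the shape defined for tridiagonal systems are instances of the same construction applied to the same pair $(A|_W, A^*|_W)$. The only care needed is to track indexing: the shape of $\Phi_W$ is indexed $0, 1, \ldots, d$, corresponding under the bijection $i \mapsto t+i$ (resp.\ $i \mapsto r+i$) to the relevant primitive idempotents $E_j$ (resp.\ $E^*_j$) of $\Gamma$, which is exactly the indexing used in Definition \ref{def:Wshape}.
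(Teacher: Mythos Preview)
Your proof is correct and follows essentially the same approach as the paper, which simply cites Definition~\ref{def:TDthin} and \cite[Definition~1.4]{LS99} without further elaboration. You have spelled out in detail what the paper leaves implicit---namely, that the shape of $W$ as a $T$-module and the shape of the associated tridiagonal system are defined via the same eigenspace dimensions---so your argument is a faithful expansion of the paper's one-line proof.
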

\begin{proof} By Definition \ref{def:TDthin} and  \cite[Definition~1.4]{LS99}.
\end{proof}

\begin{example}\label{ex:primary} \rm (See \cite[Lemma~3.6]{tSub1}, \cite[Section~7]{Nint}.) There exists a unique irreducible $T$-module that has diameter $D$; this $T$-module is called {\it primary}. The  primary $T$-module is thin.
An irreducible $T$-module is primary iff it has endpoint 0 iff it has dual endpoint 0.
\end{example}

\section{The nucleus, I}

\noindent We continue to discuss the $Q$-polynomial distance-regular graph $\Gamma=(X,\mathcal R)$. Throughout this section, we fix $x \in X$ and write $T=T(x)$.
We introduce a $T$-module called the nucleus, and describe its basic properties.

\begin{lemma} \label{lem:prelim} Let $W$ denote an irreducible $T$-module, with endpoint $r$ and diameter $d$. Then
\begin{align*}
0 \not=A_D E^*_rW \subseteq \sum_{i=D-r}^{r+d} E^*_iW.
\end{align*}
\end{lemma}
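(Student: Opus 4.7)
The plan is to prove the two assertions of the lemma separately: nonvanishing of $A_D E^*_r W$, and the containment in $\sum_{i=D-r}^{r+d} E^*_i W$. Both will follow from standard facts already established in Sections 4 and 5.

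First I would handle the containment. Since $A_D \in M \subseteq T$ and $W$ is a $T$-module, we have $A_D E^*_r W \subseteq A_D W \subseteq W$. Applying the decomposition $W = \sum_{i=0}^D E^*_i W$ coming from the $A^*$-eigenspaces (note $E^*_i W = E^*_i V \cap W$), I would write
\begin{align*}
A_D E^*_r W = \sum_{i=0}^D E^*_i A_D E^*_r W, \qquad E^*_i A_D E^*_r W \subseteq E^*_i W.
\end{align*}
Now invoke the triple product relation recalled in Section 4 ($h=D$ case): $E^*_i A_D E^*_r = 0$ whenever $i + r < D$, i.e.\ whenever $i < D - r$. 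So the $i$th summand is zero for $i < D-r$. On the other side, by Lemma~\ref{lem:rtd}(iv), $E^*_i W = 0$ whenever $i > r + d$, killing the summands with $i > r+d$. Combining both truncations yields
\begin{align*}
A_D E^*_r W \subseteq \sum_{i=D-r}^{r+d} E^*_i W,
\end{align*}
which gives the containment.

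For the nonvanishing, I would invoke Lemma~\ref{lem:ADinv}: the matrix $A_D$ is invertible on $V$, hence injective on $V$, hence injective on the subspace $W$. Since $W$ is finite-dimensional and $A_D W \subseteq W$, the map $A_D$ restricts to a bijection on $W$. Because $r$ is the endpoint of $W$, we have $E^*_r W \neq 0$, and applying the bijection $A_D\vert_W$ preserves this: $A_D E^*_r W \neq 0$.

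No step is really the main obstacle here; the only subtlety is keeping straight that the triple product relation $E^*_i A_D E^*_j = 0$ for $i+j < D$ provides the lower bound $i \geq D - r$, while the endpoint/diameter description of $W$ provides the upper bound $i \leq r+d$. As a side consequence of the two assertions, we actually obtain $D - r \leq r + d$, but this does not need to be argued in advance.
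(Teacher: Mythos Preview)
Your proof is correct and follows essentially the same approach as the paper: invertibility of $A_D$ (Lemma~\ref{lem:ADinv}) gives nonvanishing, while the triple product relation $E^*_i A_D E^*_r = 0$ for $i+r<D$ supplies the lower cutoff and the endpoint/diameter description of $W$ (Lemma~\ref{lem:rtd}(iv)) supplies the upper cutoff. The paper phrases the containment as the intersection of two inclusions \eqref{eq:c1} and \eqref{eq:c2}, but this is only a cosmetic reorganization of your argument.
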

\begin{proof} The subspace  $A_D E^*_rW$ is nonzero, because $A_D$ is invertible and $E^*_r W$ is nonzero. 
We have
\begin{align} \label{eq:c1}
A_D E^*_r W \subseteq TW \subseteq W = \sum_{i=r}^{r+d} E^*_iW.
\end{align}
Using $I = \sum_{i=0}^D E^*_i$ and the discussion above Lemma \ref{lem:ADinv},
\begin{align} \label{eq:c2}
A_D E^*_rW = I A_D E^*_r W = \sum_{i=0}^D E^*_i A_D E^*_r W = \sum_{i=D-r}^D E^*_i A_D E^*_rW\subseteq \sum_{i=D-r}^D E^*_iW.
\end{align}
Combining \eqref{eq:c1}, \eqref{eq:c2} we obtain $ A_D E^*_rW \subseteq \sum_{i=D-r}^{r+d} E^*_iW$.
\end{proof} 

\noindent In the following proposition, item (i) was obtained earlier in \cite[Lemma~5.1]{caughman}. We will give a new proof that illuminates the case of equality.

\begin{proposition} \label{lem:Wineq}  Let $W$ denote an irreducible $T$-module, with endpoint $r$  and diameter $d$.
Then the following hold.
\begin{enumerate}
\item[\rm (i)] {\rm (See \cite[Lemma~5.1]{caughman}.)} $2r-D+d\geq 0$.
\item[\rm (ii)] Assume that equality holds in {\rm (i)}. Then $W$ is thin and $A_DE^*_rW=E^*_{D-r}W$.
\end{enumerate}
\end{proposition}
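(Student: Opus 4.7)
The plan is to deduce both parts of the proposition directly from Lemma \ref{lem:prelim}, using Proposition \ref{thm:main1} to handle the equality case.

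For part (i): Lemma \ref{lem:prelim} asserts that $A_D E^*_r W$ is a nonzero subspace contained in $\sum_{i=D-r}^{r+d} E^*_i W$. If $D-r > r+d$, this sum is empty and hence zero, contradicting the nonvanishing. Therefore $D-r \leq r+d$, which rearranges to $2r-D+d \geq 0$.

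For part (ii): Assume equality, so $D-r = r+d$. Then the range of summation in Lemma \ref{lem:prelim} collapses to a single index, giving
\begin{align*}
A_D E^*_r W \subseteq E^*_{D-r} W = E^*_{r+d} W.
\end{align*}
By Lemma \ref{lem:WTD}(iii), the sequence $\Phi_W = (A; \lbrace E_{t+i}\rbrace_{i=0}^d; A^*; \lbrace E^*_{r+i}\rbrace_{i=0}^d)$ acts on $W$ as a tridiagonal system of diameter $d$, where $t$ is the dual endpoint of $W$. Since $A_D \in M = \langle A \rangle$, we may write $A_D = p(A)$ for some polynomial $p$, so $A_D|_W = p(A|_W) \in \langle A|_W \rangle$. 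Moreover $A_D$ is invertible by Lemma \ref{lem:ADinv}, so $A_D|_W \not= 0$.

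Now pick any nonzero $v \in E^*_r W$ (such a vector exists because $r$ is the endpoint of $W$). With respect to $\Phi_W$, the space $E^*_r W$ plays the role of $E^*_0 V$ in Proposition \ref{thm:main1}, and $E^*_{r+d} W$ plays the role of $E^*_d V$. Setting $S = A_D|_W$, we have a nonzero $S \in \langle A|_W \rangle$ and a nonzero $v \in E^*_r W$ with $Sv \in E^*_{r+d} W$. Applying Proposition \ref{thm:main1} to $\Phi_W$, we conclude that $\Phi_W$ is a Leonard system and that $SE^*_r W = E^*_{r+d} W$; that is, $A_D E^*_r W = E^*_{D-r} W$. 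By Lemma \ref{lem:THandLS}, $W$ is thin.

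The main obstacle is conceptual rather than computational: one must recognize that Lemma \ref{lem:prelim} provides precisely the ingredient $(v, S)$ needed to invoke Proposition \ref{thm:main1}, once the restricted tridiagonal system $\Phi_W$ is interpreted as the local object. The equality case forces the image $A_D E^*_r W$ into a single $E^*_i W$, which is the geometric statement needed to trigger the Leonard-system conclusion and hence thinness.
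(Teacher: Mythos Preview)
Your proof is correct and follows essentially the same approach as the paper: part (i) comes directly from Lemma~\ref{lem:prelim}, and part (ii) applies Proposition~\ref{thm:main1} to the tridiagonal system $\Phi_W$ of Lemma~\ref{lem:WTD}(iii) with $S = A_D|_W$, then invokes Lemma~\ref{lem:THandLS} for thinness.
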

\begin{proof} (i) By Lemma \ref{lem:prelim} we have $r+d \geq D-r$. \\
\noindent (ii)
We apply Proposition \ref{thm:main1} to the tridiagonal system in Lemma \ref{lem:WTD}(iii). Note that $A_D \in M= \langle A \rangle$.
 By Lemma \ref{lem:prelim} we have $0 \not=A_D E^*_rW \subseteq E^*_{D-r}W$. 
Pick $0 \not=v \in E^*_rW$. 
Define $S \in {\rm End}(W)$ to be the restriction of $A_D$ to $W$. 
By construction $S \not=0$ and $Sv \in E^*_{D-r}W$. By this and Proposition \ref{thm:main1}, we find that the tridiagonal system in Lemma \ref{lem:WTD}(iii)
 is a Leonard system. Consequently $W$ is thin in view of Lemma \ref{lem:THandLS}. Also by Proposition \ref{thm:main1},
 $A_DE^*_rW=E^*_{D-r}W$.
\end{proof}

\begin{lemma} \label{lem:prelim2} Let $W$ denote an irreducible $T$-module, with dual endpoint $t$ and diameter $d$. Then
\begin{align*}
0 \not=A^*_D E_tW \subseteq \sum_{i=D-t}^{t+d} E_iW.
\end{align*}
\end{lemma}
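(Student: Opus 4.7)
The plan is to prove Lemma \ref{lem:prelim2} by mimicking the proof of Lemma \ref{lem:prelim} verbatim, with the roles of $A, A^*$ interchanged and the roles of $E_i, E^*_i$ interchanged. This duality is natural because for an irreducible $T$-module $W$ with endpoint $r$, dual endpoint $t$, and diameter $d$, Lemma \ref{lem:WTD}(iii) says that the sequence $(A^*; \lbrace E^*_{r+i}\rbrace_{i=0}^d; A; \lbrace E_{t+i}\rbrace_{i=0}^d)$ acts on $W$ as a tridiagonal system (namely the $*$-dual of the one used in Lemma \ref{lem:prelim}). In this dual picture, the role of $A_D \in M$ is played by $A^*_D \in M^*$, which is also invertible by Lemma \ref{lem:ADinv}.

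The argument then proceeds in three steps. First, since $A^*_D$ is invertible and $E_tW \neq 0$ (which holds because $t$ is the dual endpoint of $W$), we get $A^*_D E_tW \neq 0$. Second, since $A^*_D \in T$, we have
\begin{align*}
A^*_D E_tW \subseteq TW \subseteq W = \sum_{i=t}^{t+d} E_iW,
\end{align*}
where the last equality comes from Lemma \ref{lem:rtd}(ii). Third, inserting $I = \sum_{i=0}^D E_i$ and using the triple product relation from Section 4, which asserts $E_i A^*_D E_t = 0$ whenever $i+t < D$, we obtain
\begin{align*}
A^*_D E_tW = \sum_{i=0}^D E_i A^*_D E_tW = \sum_{i=D-t}^D E_i A^*_D E_tW \subseteq \sum_{i=D-t}^D E_iW.
\end{align*}
Intersecting the two inclusions yields $A^*_D E_tW \subseteq \sum_{i=D-t}^{t+d} E_iW$, completing the proof.

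There is no real obstacle here: the lemma is the exact $*$-dual of Lemma \ref{lem:prelim}, so the only subtlety is making sure to cite the correct half of the triple product relations displayed in Section 4 (the $h=D$ case applied to $E_i A^*_D E_j$, not to $E^*_i A_D E^*_j$) and to invoke Lemma \ref{lem:rtd}(ii) rather than Lemma \ref{lem:rtd}(iv). A one-line alternative proof would be to apply Lemma \ref{lem:prelim} to the tridiagonal system $\Phi^*$ obtained from the one in Lemma \ref{lem:WTD}(iii).
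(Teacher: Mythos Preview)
Your proof is correct and matches the paper's own approach, which simply states ``Similar to the proof of Lemma \ref{lem:prelim}.'' Your three-step argument (invertibility of $A^*_D$ plus $E_tW\neq 0$; containment in $W=\sum_{i=t}^{t+d}E_iW$; the triple product relation $E_iA^*_DE_t=0$ for $i+t<D$) is precisely the dualization the paper has in mind.
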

\begin{proof} Similar to the proof of Lemma \ref{lem:prelim}.
\end{proof}

\begin{proposition} \label{lem:Wineq2}  Let $W$ denote an irreducible $T$-module, with dual endpoint $t$ and diameter $d$.
Then the following hold.
\begin{enumerate}
\item[\rm (i)] {\rm (See \cite[Lemma~7.1]{caughman}.)} $2t-D+d\geq 0$.
\item[\rm (ii)] Assume that equality holds in {\rm (i)}. Then $W$ is thin and $A^*_D E_tW=E_{D-t}W$.
\end{enumerate}
\end{proposition}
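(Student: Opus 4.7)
The strategy is to mirror the proof of Proposition \ref{lem:Wineq}, working with the dual tridiagonal system $\Phi^*$ instead of $\Phi$. The key observation is that all of the ingredients used in the previous proof have dual counterparts already established in the excerpt.

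For part (i), I would invoke Lemma \ref{lem:prelim2}, which asserts that $0 \neq A^*_D E_tW \subseteq \sum_{i=D-t}^{t+d} E_iW$. For this containment to be consistent (i.e.\ for the upper index to be at least the lower index), we need $t+d \geq D-t$, which rearranges to $2t - D + d \geq 0$. This is the direct analog of the argument for Proposition \ref{lem:Wineq}(i).

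For part (ii), I would apply Corollary \ref{thm:main1d} to the tridiagonal system from Lemma \ref{lem:WTD}(iii), which acts on $W$. Assuming equality $2t-D+d=0$, Lemma \ref{lem:prelim2} gives $0 \neq A^*_D E_tW \subseteq E_{D-t}W$. Pick $0 \neq v \in E_tW$, and let $S^* \in {\rm End}(W)$ denote the restriction of $A^*_D$ to $W$. Since $A^*_D \in M^* = \langle A^* \rangle$, we have $S^* \in \langle A^* \rangle$ when viewed on $W$ (the restriction belongs to the algebra generated by the restriction of $A^*$). By construction $S^* \neq 0$ and $S^*v \in E_{D-t}W$. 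Applying Corollary \ref{thm:main1d} to the tridiagonal system in Lemma \ref{lem:WTD}(iii), we conclude that this system is a Leonard system. Hence $W$ is thin by Lemma \ref{lem:THandLS}, and the second assertion of Corollary \ref{thm:main1d} gives $A^*_D E_tW = E_{D-t}W$.

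The proof is essentially mechanical once the dual framework (Corollary \ref{lem:injd} and Corollary \ref{thm:main1d}) is in place, so there is no substantive obstacle; the only small care point is verifying that the restriction of $A^*_D$ to $W$ lies in the algebra generated by the restriction of $A^*$ to $W$, but this is immediate since $A^*$ generates $M^*$ (by \cite[Corollary~11.6]{Nint}) and the restriction map is an algebra homomorphism. Alternatively, one can simply remark that the proof is the same as that of Proposition \ref{lem:Wineq} applied to the dual tridiagonal system $\Phi^*$ from Definition \ref{def:TDS}, which exchanges the roles of $(A, \lbrace E_i \rbrace)$ and $(A^*, \lbrace E^*_i \rbrace)$, and thereby exchanges endpoint with dual endpoint and $A_D$ with $A^*_D$.
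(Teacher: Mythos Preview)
Your proposal is correct and matches the paper's approach exactly: the paper's proof simply says ``Similar to the proof of Proposition \ref{lem:Wineq}, using Corollary \ref{thm:main1d} and Lemma \ref{lem:prelim2} instead of Proposition \ref{thm:main1} and Lemma \ref{lem:prelim},'' and you have faithfully unpacked that dualization. Your remark about the restriction map being an algebra homomorphism is a reasonable clarification of a point the paper leaves implicit.
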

\begin{proof} Similar to the proof of Proposition \ref{lem:Wineq}, using Corollary \ref{thm:main1d} and Lemma \ref{lem:prelim2} instead of Proposition \ref{thm:main1}  and Lemma \ref{lem:prelim}.
\end{proof}

\begin{proposition} \label{prop:mix} Let $W$ denote an irreducible $T$-module, with endpoint $r$, dual endpoint $t$, and diameter $d$.
Then 
\begin{align}
r+t-D+d \geq 0. \label{eq:disp}
\end{align}
Moreover, equality holds in \eqref{eq:disp} if and only if both $t=r$ and $d=D-2r$. In this case, $W$ is thin and $A_D E^*_rW=E^*_{D-r}W$ and $A^*_D E_rW=E_{D-r}W$. 
\end{proposition}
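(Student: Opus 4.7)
The plan is to obtain Proposition \ref{prop:mix} as a direct consequence of the two single-sided results, Propositions \ref{lem:Wineq} and \ref{lem:Wineq2}, by simply adding their inequalities and then exploiting the nonnegativity of each summand to recover the equality case.

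First I would invoke Proposition \ref{lem:Wineq}(i) and Proposition \ref{lem:Wineq2}(i) to get the two inequalities $2r-D+d\geq 0$ and $2t-D+d\geq 0$. Adding them yields $2(r+t-D+d)\geq 0$, which after dividing by $2$ is exactly \eqref{eq:disp}. This handles the inequality part with essentially no work.

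For the equality case, the point is that $r+t-D+d=0$ forces the sum $(2r-D+d)+(2t-D+d)=0$ of two nonnegative integers to vanish, so each summand must vanish: $2r-D+d=0$ and $2t-D+d=0$. Subtracting gives $r=t$, and either equation then gives $d=D-2r$. Conversely, if $r=t$ and $d=D-2r$, then substituting into $r+t-D+d$ yields $0$, so the biconditional holds. Once equality is in force, I simply apply part (ii) of each of Propositions \ref{lem:Wineq} and \ref{lem:Wineq2}: the first gives that $W$ is thin and that $A_D E^*_r W = E^*_{D-r} W$, while the second (using $t=r$) gives $A^*_D E_r W = E_{D-r} W$.

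I do not expect any real obstacle here; the content of the proposition has been entirely prepackaged into the two preceding propositions, and the only subtlety is noticing that the sum of two nonnegative quantities vanishing forces each to vanish. The proof amounts to a two- or three-line bookkeeping argument, so the writeup should be correspondingly short.
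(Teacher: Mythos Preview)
Your proposal is correct and follows essentially the same approach as the paper: the paper writes $r+t-D+d = \tfrac{1}{2}(2r-D+d) + \tfrac{1}{2}(2t-D+d)$ and invokes Propositions \ref{lem:Wineq} and \ref{lem:Wineq2}, which is exactly your argument of adding the two inequalities and using nonnegativity to handle the equality case.
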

\begin{proof} Note that
\begin{align*}
r+t-D+d = \frac{2r-D+d}{2} + \frac{2t-D+d}{2}.
\end{align*}
The result follows from this and Propositions \ref{lem:Wineq}, \ref{lem:Wineq2}.
\end{proof}

\begin{definition}\label{def:DISP} \rm (See \cite[Definition~4.1]{ds}.) Let $W$ denote an irreducible $T$-module. By the {\it displacement of $W$} we mean the integer
\begin{align*}
         r+t-D+d,
\end{align*}
where $r$ (resp. $t$) (resp. $d$)  denotes the endpoint (resp. dual endpoint) (resp. diameter) of $W$.
\end{definition}

\begin{example}\rm The primary $T$-module has displacement 0.
\end{example}

\begin{definition}\label{def:NUC} \rm By the {\it nucleus} of $\Gamma$ with respect to $x$, we mean the span of
the irreducible $T$-modules that have displacement 0.
\end{definition}


\begin{lemma} \label{lem:Interp} Consider the nucleus of $\Gamma$ with respect to $x$.
\begin{enumerate}
\item[\rm (i)] The nucleus is a $T$-module.
\item[\rm (ii)]  The orthogonal complement of the nucleus in $V$ is spanned by the irreducible $T$-modules that have displacement at least one.
\end{enumerate}
\end{lemma}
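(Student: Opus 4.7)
The plan is to reduce both parts to the simple observation that displacement is a $T$-module isomorphism invariant, together with Lemmas \ref{lem:ModODS} and \ref{lem:orth}. Part (i) is essentially immediate: the nucleus is defined as a sum of $T$-submodules of $V$, and any sum of $T$-submodules of $V$ is again a $T$-submodule. So no substantive work is needed for (i).

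For part (ii), I would begin by invoking Lemma \ref{lem:ModODS} to fix an orthogonal decomposition $V = \bigoplus_{k} W_k$ into irreducible $T$-modules. Partition the index set into $I_0 = \{k : \text{disp}(W_k) = 0\}$ and $I_{\geq 1} = \{k : \text{disp}(W_k) \geq 1\}$, and set $U = \sum_{k \in I_0} W_k$ and $U' = \sum_{k \in I_{\geq 1}} W_k$, so that $V = U \oplus U'$ as an orthogonal direct sum. The key intermediate claim is that the displacement is a $T$-module isomorphism invariant. To see this, let $\sigma: W \to W'$ be a $T$-module isomorphism between irreducible $T$-modules. Since $T$ contains $M$ and $M^*$, it contains each $E_i$ and each $E^*_i$; hence $\sigma(E_i W) = E_i W'$ and $\sigma(E^*_i W) = E^*_i W'$ for $0 \leq i \leq D$. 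Therefore the endpoint, dual endpoint, and diameter of $W$ and $W'$ agree, so their displacements agree.

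Next I would show $\mathcal N = U$. The inclusion $U \subseteq \mathcal N$ is automatic since each $W_k$ with $k \in I_0$ has displacement $0$. For the reverse inclusion, let $W$ be any irreducible $T$-submodule of $V$ with displacement $0$. By the isomorphism invariance just established, $W$ is not isomorphic to any $W_k$ with $k \in I_{\geq 1}$, so Lemma \ref{lem:orth} forces $W \perp W_k$ for all $k \in I_{\geq 1}$. Consequently $W \subseteq (U')^\perp = U$. Summing over all such $W$ gives $\mathcal N \subseteq U$, hence $\mathcal N = U$. An entirely symmetric argument shows that the span $\mathcal N'$ of all irreducible $T$-submodules of displacement at least $1$ equals $U'$. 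Combining the two identities yields
\begin{align*}
\mathcal N^\perp = U^\perp = U' = \mathcal N',
\end{align*}
which is precisely (ii).

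I do not anticipate a serious obstacle. The only delicate point is checking that displacement is an isomorphism invariant, which is routine once one notes that the primitive idempotents $E_i, E^*_i$ all lie in $T$. Everything else is bookkeeping around the orthogonal decomposition of $V$ provided by Lemma \ref{lem:ModODS} and the orthogonality of nonisomorphic irreducibles provided by Lemma \ref{lem:orth}.
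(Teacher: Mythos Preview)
Your proof is correct and follows essentially the same approach as the paper: both arguments hinge on the observation that displacement is a $T$-module isomorphism invariant (so irreducibles of displacement $0$ and displacement $\geq 1$ are never isomorphic), combined with Lemmas~\ref{lem:ModODS} and~\ref{lem:orth}. You have simply spelled out in detail what the paper compresses into a single sentence.
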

\begin{proof} (i) The span of two $T$-modules is a $T$-module.\\
\noindent (ii) An irreducible $T$-module with zero displacement is not isomorphic to
an irreducible $T$-module with nonzero displacement.
The result follows in view of Lemmas \ref{lem:ModODS}, \ref{lem:orth} and Definition \ref{def:NUC}. 
\end{proof}

\noindent We emphasize a few points about the nucleus.
\begin{proposition} \label{lem:Wdata} Let $W$ denote an irreducible $T$-submodule of the nucleus, with endpoint $r$, dual endpoint $t$, and diameter $d$.
Then $0 \leq r \leq D/2$ and $t=r$ and $d=D-2r$. Moreover, $W$ is thin and $A_D E^*_rW=E^*_{D-r}W$ and $A^*_D E_rW=E_{D-r}W$.
\end{proposition}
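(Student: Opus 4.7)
The plan is to reduce everything to Proposition~\ref{prop:mix} by first showing that every irreducible $T$-submodule of the nucleus itself has displacement $0$. Once this is established, Proposition~\ref{prop:mix} will immediately supply all the remaining conclusions: $t=r$, $d=D-2r$, thinness of $W$, and the two identities $A_D E^*_rW=E^*_{D-r}W$ and $A^*_D E_rW=E_{D-r}W$. The bound $0\le r\le D/2$ will then follow from $r\ge 0$ together with $d=D-2r\ge 0$.

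The heart of the argument is therefore the claim that every irreducible $T$-submodule of $\mathcal N$ has displacement $0$. By Definition~\ref{def:NUC}, we may write $\mathcal N={\rm Span}(W_1,\ldots,W_k)$ where each $W_j$ is an irreducible $T$-module of displacement~$0$. Let $W\subseteq\mathcal N$ be any irreducible $T$-submodule. If $W$ were not isomorphic to any $W_j$, then by Lemma~\ref{lem:orth} we would have $W\perp W_j$ for all $j$, hence $W\perp \mathcal N$. But $W\subseteq\mathcal N$, so $W\perp W$, forcing $W=0$, a contradiction. Thus $W$ is isomorphic to some $W_j$.

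It remains to observe that endpoint, dual endpoint, and diameter are isomorphism invariants of irreducible $T$-modules. This is immediate since $E^*_i, E_i\in T$: any $T$-module isomorphism $\sigma:W\to W_j$ satisfies $\sigma(E^*_iW)=E^*_iW_j$ and $\sigma(E_iW)=E_iW_j$, so $E^*_iW\neq 0\iff E^*_iW_j\neq 0$ and similarly for $E_i$. Hence $W$ and $W_j$ share the parameters $r,t,d$, and in particular $W$ inherits displacement $r+t-D+d=0$ from $W_j$.

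Applying Proposition~\ref{prop:mix} to $W$ with displacement $0$ yields $t=r$, $d=D-2r$, thinness of $W$, and the two displayed identities. The inequalities $r\ge 0$ and $D-2r=d\ge 0$ give $0\le r\le D/2$, completing the proof. The only step that requires real thought is the orthogonality/isomorphism argument of the second paragraph; everything after that is a direct appeal to Proposition~\ref{prop:mix}.
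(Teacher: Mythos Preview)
Your proof is correct and follows essentially the same approach as the paper. The paper's proof is a one-line citation of Proposition~\ref{prop:mix}, Definitions~\ref{def:DISP} and \ref{def:NUC}, and Lemma~\ref{lem:Interp}; you have simply unpacked the key step---that any irreducible $T$-submodule of $\mathcal N$ has displacement $0$---by appealing directly to Lemma~\ref{lem:orth}, whereas the paper routes this through Lemma~\ref{lem:Interp} (whose proof in turn invokes Lemma~\ref{lem:orth} in the same way).
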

\begin{proof} By Proposition \ref{prop:mix} and Definitions \ref{def:DISP}, \ref{def:NUC} along with Lemma \ref{lem:Interp}. 
\end{proof}

\begin{proposition}\label{lem:nucISO} Let $W, W'$ denote irreducible $T$-submodules of the nucleus. Then the following are equivalent:
\begin{enumerate}
\item[\rm (i)] the endpoints of $W, W'$ are the same;
\item[\rm (ii)] the $T$-modules $W, W'$ are isomorphic.
\end{enumerate}
\end{proposition}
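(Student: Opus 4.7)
For the direction $(ii) \Rightarrow (i)$, any $T$-module isomorphism $\sigma : W \to W'$ commutes with each $E^*_i \in T$, hence sends $E^*_iW$ bijectively onto $E^*_iW'$; consequently the sets $\{i : E^*_iW \neq 0\}$ and $\{i : E^*_iW' \neq 0\}$ coincide, and in particular their minima (the endpoints) agree.

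For $(i) \Rightarrow (ii)$, let $r$ denote the common endpoint. By Proposition \ref{lem:Wdata}, both $W$ and $W'$ are thin with dual endpoint $r$ and diameter $d = D - 2r$, so ${\rm dim}\,W = {\rm dim}\,W' = d+1$. By Lemma \ref{lem:WTD}(iii) together with Lemma \ref{lem:THandLS}, the tridiagonal systems on $W$ and on $W'$ are Leonard systems sharing the eigenvalue sequence $\{\theta_{r+i}\}_{i=0}^d$ and dual eigenvalue sequence $\{\theta^*_{r+i}\}_{i=0}^d$, each of which is determined by $\Gamma$ and $r$.

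My plan is to construct the isomorphism via the split decomposition. Pick nonzero $v \in E^*_rW$ and $v' \in E^*_rW'$; by thinness each spans its one-dimensional subspace. By Lemma \ref{lem:inj}(i) applied to the tridiagonal systems on $W$ and $W'$, the evaluation maps $\mathbb{C}[\lambda] \to W$, $p \mapsto p(A)v$ and $\mathbb{C}[\lambda] \to W'$, $p \mapsto p(A)v'$ are surjective with common kernel equal to the ideal generated by $\prod_{i=0}^d(\lambda - \theta_{r+i})$. Hence $\sigma : W \to W'$ defined by $\sigma(p(A)v) = p(A)v'$ is a well-defined $\mathbb{C}$-linear bijection, and by construction $\sigma A = A\sigma$.

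The substantive step is $\sigma A^* = A^* \sigma$. I would pass to the split bases $u_i = \eta_i(A)v$ of $W$ and $u'_i = \eta_i(A)v'$ of $W'$ (using $\eta_i(\lambda) = \prod_{j=0}^{i-1}(\lambda - \theta_{r+d-j})$), noting $\sigma(u_i) = u'_i$. Each Leonard system has the split form $A^* u_i = \theta^*_{r+i} u_i + \beta_i u_{i-1}$, and similarly $A^* u'_i = \theta^*_{r+i} u'_i + \beta'_i u'_{i-1}$, so it suffices to prove $\beta_i = \beta'_i$ for $1 \leq i \leq d$. Here the nucleus hypothesis enters crucially: by Proposition \ref{lem:Wdata}, $A_D v \in E^*_{D-r}W$ and $A_D v' \in E^*_{D-r}W'$. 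Since $A_D = \sum_j p_D(\theta_j) E_j$ with $p_D$ the distance-$D$ polynomial, the vectors $A_D v$ and $A_D v'$ have identical expansions $\sum_i \gamma_i u_i$ and $\sum_i \gamma_i u'_i$ in terms of universal coefficients $\gamma_i$ depending only on $\Gamma$ and $r$. Imposing $A^*(A_D v) = \theta^*_{D-r}(A_D v)$ in the split basis yields the recursion $\gamma_{i+1}\beta_{i+1} = \gamma_i(\theta^*_{D-r} - \theta^*_{r+i})$, which (once one verifies $\gamma_i \neq 0$, a consequence of $A_D$ acting invertibly on $W$ via Lemma \ref{lem:ADinv}) determines each $\beta_i$ in terms of $\Gamma$ and $r$ alone. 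The identical recursion for $W'$ forces $\beta_i = \beta'_i$; hence $\sigma$ intertwines $A^*$ on the split basis and is therefore a $T$-module isomorphism. The main obstacle is the last step, namely verifying non-vanishing of the $\gamma_i$ and executing the recursion cleanly; the underlying principle, that the displacement-zero identity pins down the split sequence uniquely in terms of $\Gamma$ and $r$, is the conceptual heart of the proof.
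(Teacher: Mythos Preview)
Your proposal is correct and follows essentially the same route as the paper: both arguments pass to the split basis $\eta_i(A)v$, expand $A_D v$ in this basis with coefficients $\gamma_i$ (the paper's $\alpha_i$) that depend only on $\Gamma$ and $r$, impose the condition $A_D v \in E^*_{D-r}W$ to obtain the recursion $\gamma_i\beta_i = \gamma_{i-1}(\theta^*_{D-r}-\theta^*_{r+i-1})$, and conclude that $\sigma: \eta_i(A)v \mapsto \eta_i(A)v'$ intertwines both $A$ and $A^*$. One small sharpening: invertibility of $A_D$ by itself only yields $\gamma_0 = f(\theta_{r+d}) \neq 0$; the full nonvanishing $\gamma_i \neq 0$ for all $i$ is exactly the ``second claim'' in the proof of Proposition~\ref{thm:main1} (applied with $S=A_D$), which the paper simply cites rather than re-proving.
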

\begin{proof}
\noindent ${\rm (i)}\Rightarrow {\rm (ii)}$ 
Denote the common endpoint by $r$. By Proposition \ref{lem:Wdata} we have $0 \leq r\leq D/2$. Also by Proposition \ref{lem:Wdata},  $d=D-2r$ is the common diameter of $W, W'$.
The restriction of $A$ to $W$ or $W'$ has eigenvalues $\theta_r, \theta_{r+1}, \ldots, \theta_{r+d}$.  Therefore,
the following holds on $W$ and $W'$:
\begin{align}
0 = (A-\theta_r I)(A-\theta_{r+1}I) \cdots (A-\theta_{r+d}I). \label{eq:mpA}
\end{align}
 Define the polynomials
 \begin{align}
 f_i(\lambda) = (\lambda-\theta_{r+d})(\lambda-\theta_{r+d-1}) \cdots (\lambda-\theta_{r+d-i+1}) \qquad  (0 \leq i \leq d). \label{eq:mpA2}
 \end{align}
 The polynomial $f_i$ is monic with degree $i$ $(0 \leq i \leq d)$.
 There exists a polynomial $f(\lambda)$ such that $A_D=f(A)$.
 By linear algebra, there exists a unique sequence of complex scalars $\lbrace \alpha_i \rbrace_{i=0}^d$ such that
 \begin{align*}
 f(\theta_{r+j} ) = \sum_{i=0}^d \alpha_i f_i(\theta_{r+j}) \qquad \quad (0 \leq j \leq d).
 \end{align*}
  By construction, the following holds on $W$ and $W'$:
 \begin{align*}
         A_D = \sum_{i=0}^d \alpha_i f_i(A).
 \end{align*}        
 We apply Proposition \ref{thm:main1} and its proof, using $S=A_D$ along with the Leonard systems in Lemma \ref{lem:WTD}(iii)
 associated with $W, W'$. We begin with $W$.
Pick $0 \not=v \in E^*_rW$ and note that  
\begin{align}
\label{eq:short}
A^* v = \theta^*_r v.
\end{align}
 The vectors $\lbrace f_i(A)v \rbrace_{i=0}^d$ form a basis for $W$. 
By \eqref{eq:mpA} and \eqref{eq:mpA2},
\begin{align} \label{eq:short2}
&(A -\theta_{r+d-i}I) f_i(A)v= f_{i+1}(A)v \quad (0 \leq i \leq d-1),  \qquad  (A-\theta_r I)f_d(A)v=0. 
\end{align}
By the first claim in the proof of Proposition \ref{thm:main1},
\begin{align*}
\alpha_i (A^*- \theta^*_{r+i} I) f_i(A)v = \alpha_{i-1} (\theta^*_{r+d} - \theta^*_{r+i-1}) f_{i-1}(A)v \qquad \quad (1 \leq i \leq d).
\end{align*}
By the second claim in the proof of Proposition \ref{thm:main1}, $\alpha_i \not=0$ $(0 \leq i \leq d)$. 
Define
\begin{align*}
     \phi_i = (\theta^*_{r+d} - \theta^*_{r+i-1}) \alpha_{i-1}/\alpha_i  \qquad \quad (1 \leq i \leq d).
     \end{align*}
We have
\begin{align}\label{eq:short3}
(A^* -\theta^*_{r+i}I)  f_i(A)v= \phi_i f_{i-1}(A)v \qquad (1 \leq i \leq d).
\end{align}
We are done discussing $W$; next consider $W'$. Pick $0 \not=v' \in E^*_rW'$. We do for $W',v'$ what we did for $W,v$. The vectors
$\lbrace f_i(A)v' \rbrace_{i=0}^d$ form a basis for $W'$. The equations \eqref{eq:short}--\eqref{eq:short3} hold with
$v$ replaced by $v'$. We can now easily construct a $T$-module isomorphism from $W$ to $W'$.
There exists a $\mathbb C$-linear bijection $\sigma : W \to W'$ that sends $f_i(A)v \mapsto f_i(A)v'$ for $0 \leq i \leq d$.
Comparing \eqref{eq:short}--\eqref{eq:short3} with the corresponding equations for $v'$, we obtain
$(A \sigma - \sigma A)W=0$ and $(A^* \sigma - \sigma A^*)W=0$. It follows that $\sigma$ is a $T$-module isomorphism.
We have shown that the $T$-modules $W,W'$ are isomorphic.
\\
\noindent ${\rm (ii)}\Rightarrow {\rm (i)}$ Clear.
\end{proof}

\section{The nucleus, II}
We continue to discuss the $Q$-polynomial distance-regular graph $\Gamma=(X,\mathcal R)$.
Throughout this section, we fix $x \in X$ and write $T=T(x)$. 
In Definition \ref{def:NUC}, we used the concept of displacement to define a $T$-module called the nucleus.
In the present section, we describe the nucleus from another point of view.

\begin{lemma} \label{lem:zeroInt} {\rm (See \cite[Definition~5.1, Theorem~6.2(iii)]{ds}.)} For $0 \leq i,j\leq D$ such that $i+j<D$, 
\begin{align}
(E^*_0V+E^*_1V+\cdots + E^*_iV) \cap (E_0V+E_1V+\cdots + E_jV)=0.
\end{align}
\end{lemma}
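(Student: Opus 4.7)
The plan is to reduce to the tridiagonal-pair picture on each irreducible $T$-submodule of $V$, and then to combine the direct-sum decomposition from Lemma \ref{lem:WSP} with the displacement inequality from Proposition \ref{prop:mix}.

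First I would decompose $V = \bigoplus_\alpha W_\alpha$ into irreducible $T$-modules (Lemma \ref{lem:ModODS}). Since each $E^*_{i'}$ and each $E_{j'}$ lies in $T$, each $W_\alpha$ is stable under these projections, so
\begin{align*}
\sum_{i'=0}^i E^*_{i'} V = \bigoplus_\alpha \sum_{i'=0}^i E^*_{i'} W_\alpha, \qquad
\sum_{j'=0}^j E_{j'} V = \bigoplus_\alpha \sum_{j'=0}^j E_{j'} W_\alpha.
\end{align*}
Therefore it is enough to prove, for every irreducible $T$-module $W$ with endpoint $r$, dual endpoint $t$, and diameter $d$, that
$\bigl(\sum_{i'=0}^i E^*_{i'} W\bigr) \cap \bigl(\sum_{j'=0}^j E_{j'} W\bigr) = 0$ whenever $i+j<D$.

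Next I would normalize the indexing. By Lemma \ref{lem:rtd}, $E^*_{i'}W = 0$ for $i'<r$ and $E_{j'}W=0$ for $j'<t$; moreover $W=\sum_{i'=r}^{r+d} E^*_{i'}W=\sum_{j'=t}^{t+d} E_{j'}W$. If $i<r$ or $j<t$ the intersection is already zero, so I may set $K=\min(i-r,d)$ and $L=\min(j-t,d)$, both in $[0,d]$, and the two partial sums become $\sum_{k=0}^K E^*_{r+k}W$ and $\sum_{\ell=0}^L E_{t+\ell}W$. Apply Lemma \ref{lem:WSP} to the tridiagonal system of Lemma \ref{lem:WTD}(iii): the subspaces
\begin{align*}
U_k \;=\; \bigl(E^*_rW + \cdots + E^*_{r+k}W\bigr)\,\cap\,\bigl(E_tW + \cdots + E_{t+d-k}W\bigr) \qquad (0 \leq k \leq d)
\end{align*}
satisfy $W=\bigoplus_{k=0}^d U_k$, with $\dim U_k = \rho_k$ and $\rho_k = \dim E^*_{r+k}W = \dim E_{t+k}W = \rho_{d-k}$ by Lemma \ref{lem:shape}.

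The key step is then a dimension comparison. Since $U_k \subseteq E^*_rW+\cdots+E^*_{r+k}W$ for $k\le K$, we get $U_0+\cdots+U_K \subseteq \sum_{k=0}^K E^*_{r+k}W$; both sides have dimension $\sum_{k=0}^K \rho_k$, so they coincide. Similarly, using $U_k \subseteq E_tW+\cdots+E_{t+d-k}W$ and $\rho_k = \rho_{d-k}$, one obtains $\sum_{\ell=0}^L E_{t+\ell}W = U_{d-L}+U_{d-L+1}+\cdots+U_d$. Because $\sum_{k=0}^d U_k$ is direct, their intersection equals $\sum_{k \in \{0,\dots,K\}\,\cap\,\{d-L,\dots,d\}} U_k$. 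Finally, the displacement bound kicks in: by Proposition \ref{prop:mix}, $r+t+d\ge D$, hence
\begin{align*}
K+L \;\le\; (i-r)+(j-t) \;=\; (i+j)-(r+t) \;<\; D-(r+t) \;\le\; d,
\end{align*}
so $K<d-L$ and the two index sets are disjoint, forcing the intersection to be $0$.

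I expect the main obstacle to be the bookkeeping in the third paragraph: identifying the partial sums of $E^*$-eigenspaces (respectively $E$-eigenspaces) with initial (respectively terminal) segments of the $U_k$'s. Once this identification is in place, the displacement inequality does the rest with no further work.
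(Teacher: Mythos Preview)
Your argument is correct. The reduction to irreducible $T$-modules via Lemma~\ref{lem:ModODS} is sound (each $E^*_{i'}$ and $E_{j'}$ preserves the summands, so the two partial sums split across the decomposition and hence so does their intersection), the identification of $\sum_{k=0}^K E^*_{r+k}W$ and $\sum_{\ell=0}^L E_{t+\ell}W$ with the initial and terminal segments of the split decomposition $\{U_k\}$ follows by the dimension count you give (using $\rho_k=\rho_{d-k}$ from Lemma~\ref{lem:shape}), and the displacement inequality $r+t+d\ge D$ from Proposition~\ref{prop:mix} gives $K+L<d$, forcing disjointness of the index sets.

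As for comparison with the paper: the paper does not prove Lemma~\ref{lem:zeroInt} at all but simply cites \cite[Definition~5.1, Theorem~6.2(iii)]{ds}. Your proof is therefore not a different route so much as an explicit reconstruction of the cited result using the machinery already imported into the present paper (the split decomposition of Lemma~\ref{lem:WSP} and the displacement bound of Proposition~\ref{prop:mix}, both of which ultimately trace back to \cite{ds} and \cite{NsomeAlg}). This has the advantage of making the paper more self-contained at this point; the cost is only the bookkeeping paragraph you flagged, which is routine once the split decomposition is in hand.
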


\begin{definition}\label{def:Ni} {\rm (See \cite[Definition~5.1]{ds}.)} For $0 \leq i \leq D$ define a subspace $\mathcal N_i = \mathcal N_i(x)$ by
\begin{align*}
\mathcal N_i = (E^*_0V+E^*_1V+\cdots + E^*_iV) \cap (E_0V+E_1V+\cdots + E_{D-i}V).
\end{align*}
\end{definition}

\begin{lemma} \label{lem:UDir} The sum $\sum_{i=0}^D \mathcal N_i$ is direct.
\end{lemma}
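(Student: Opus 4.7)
The plan is to show directness by the standard argument: assume a linear relation $\sum_{i=0}^D v_i = 0$ with $v_i \in \mathcal N_i$ and deduce that every $v_i = 0$. Suppose for contradiction some $v_i$ is nonzero, and let $\ell$ be the \emph{smallest} index with $v_\ell \neq 0$.

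From the defining inclusions, $v_\ell \in \mathcal N_\ell \subseteq E^*_0V + E^*_1V + \cdots + E^*_\ell V$. On the other hand, using the relation, $v_\ell = -\sum_{i > \ell} v_i$. Each summand on the right lies in $\mathcal N_i \subseteq E_0V + \cdots + E_{D-i}V$, and since $i > \ell$ gives $D-i \leq D-\ell-1$, we conclude
\begin{align*}
v_\ell \;\in\; E_0V + E_1V + \cdots + E_{D-\ell-1}V.
\end{align*}
Therefore $v_\ell$ lies in the intersection
\begin{align*}
(E^*_0V + \cdots + E^*_\ell V) \cap (E_0V + \cdots + E_{D-\ell-1}V).
\end{align*}
Taking $i = \ell$ and $j = D-\ell-1$, we have $i + j = D-1 < D$, so Lemma \ref{lem:zeroInt} forces this intersection to be zero. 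Hence $v_\ell = 0$, contradicting the choice of $\ell$.

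This rules out any nonzero $v_\ell$, so all $v_i$ vanish and the sum is direct. There is no serious obstacle here; the only point requiring care is the bookkeeping with the index $D-i$ when we translate ``pick the smallest index in the relation'' into a statement fitting the hypothesis $i+j<D$ of Lemma \ref{lem:zeroInt}. The boundary cases $\ell = D$ (where the sum over $i > \ell$ is empty and the relation itself gives $v_D = 0$) and $\ell = 0$ (where $D-\ell-1 = D-1 \geq 0$ since $D \geq 1$) are both covered without modification.
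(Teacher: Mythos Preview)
Your proof is correct and follows essentially the same approach as the paper: both rely on Lemma~\ref{lem:zeroInt} as the key ingredient. The paper's proof simply cites \cite[Corollary~5.8]{ds} together with Lemma~\ref{lem:zeroInt}, while you spell out the standard directness argument that those citations encode.
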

\begin{proof} By \cite[Corollary~5.8]{ds} and Lemma \ref{lem:zeroInt}.
\end{proof}

\begin{definition} \label{def:N} \rm Define a subspace  $\mathcal N = \mathcal N(x)$ by
\begin{align*}
\mathcal N = \sum_{i=0}^D \mathcal N_i.
\end{align*}
\end{definition}

\begin{theorem} \label{thm:ND} The following are the same:
\begin{enumerate}
\item [\rm (i)] the subspace $\mathcal N=\mathcal N(x)$ from Definition \ref{def:N};
\item[\rm (ii)] the nucleus of $\Gamma$ with respect to $x$.
\end{enumerate}
\end{theorem}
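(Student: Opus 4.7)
The plan is to combine the orthogonal decomposition $V=\bigoplus_\alpha W_\alpha$ of the standard module into irreducible $T$-modules (Lemma \ref{lem:ModODS}) with the split decomposition of each $W_\alpha$ from Lemma \ref{lem:WSP}. The goal is to show that for every irreducible summand $W_\alpha$ and every $j$, the intersection $\mathcal{N}_j\cap W_\alpha$ is zero unless $W_\alpha$ has displacement $0$.

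For the per-module step, fix an irreducible $T$-module $W$ with endpoint $r$, dual endpoint $t$, and diameter $d$, and abbreviate the split-decomposition summands by
\[
U_i=(E^*_rW+\cdots+E^*_{r+i}W)\cap(E_tW+\cdots+E_{t+d-i}W) \qquad (0\leq i\leq d),
\]
so $W=\bigoplus_{i=0}^d U_i$ by Lemma \ref{lem:WSP}. A short argument using $V=\bigoplus_k E^*_kV$ gives $E^*_kV\cap W=E^*_kW$, hence $W\cap(E^*_0V+\cdots+E^*_jV)=\bigoplus_{k=r}^{\min(j,r+d)} E^*_kW$. On the other hand, the inclusion $\bigoplus_{i=0}^{\min(d,j-r)}U_i \subseteq W\cap(E^*_0V+\cdots+E^*_jV)$ is immediate from $U_i\subseteq E^*_rW+\cdots+E^*_{r+i}W$, and a dimension comparison using $\dim U_i=\rho_i=\dim E^*_{r+i}W$ (Lemma \ref{lem:shape}) upgrades this to equality. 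Symmetrically, $W\cap(E_0V+\cdots+E_kV)=\bigoplus_{i=\max(0,t+d-k)}^{d}U_i$. Intersecting the two expressions within the direct sum $W=\bigoplus_i U_i$ yields
\[
W\cap \mathcal{N}_j \;=\; \bigoplus_{i=\max(0,\,t+d-D+j)}^{\min(d,\,j-r)} U_i,
\]
which is nonzero for some $j$ only if $r+t+d\leq D$. Combined with Proposition \ref{prop:mix}, this forces the displacement of $W$ to be $0$. In that case, Proposition \ref{lem:Wdata} gives $t=r$ and $d=D-2r$, so the lower and upper bounds collapse to $i=j-r$ and $W\cap \mathcal{N}_j=U_{j-r}$ exactly for $r\leq j\leq D-r$.

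To globalize, note that both $E^*_0V+\cdots+E^*_jV$ and $E_0V+\cdots+E_{D-j}V$ respect the decomposition $V=\bigoplus_\alpha W_\alpha$, since $E^*_kV=\bigoplus_\alpha E^*_kW_\alpha$ and $E_kV=\bigoplus_\alpha E_kW_\alpha$ for every $k$. Therefore $\mathcal{N}_j=\bigoplus_\alpha (\mathcal{N}_j\cap W_\alpha)$, and by the previous step only the displacement-$0$ summands contribute. Summing over $j$ and using Lemma \ref{lem:UDir} yields that $\mathcal{N}=\bigoplus_{j=0}^D \mathcal{N}_j$ equals the direct sum of all irreducible $T$-submodules $W_\alpha$ of displacement $0$, which is the nucleus of $\Gamma$ with respect to $x$ by Definition \ref{def:NUC}.

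The main obstacle is the per-module structural identity: showing that the split decomposition $W=\bigoplus_i U_i$ is perfectly compatible with both the $E^*$- and $E$-filtrations, so that $W\cap(E^*_0V+\cdots+E^*_jV)$ is an initial block and $W\cap(E_0V+\cdots+E_kV)$ is a terminal block of the summands $U_i$. One inclusion in each case is trivial, but the reverse inclusions rely on the shape equalities $\dim E^*_{r+i}W=\rho_i=\dim E_{t+i}W$ supplied by Lemma \ref{lem:shape}. Once this structural compatibility is established, the remaining combinatorics are straightforward.
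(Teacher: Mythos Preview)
Your argument is correct. The only step that deserves an extra word is the ``symmetric'' dimension count for the $E$-filtration: to match $\dim\bigl(\bigoplus_{i=d-m'}^{d}U_i\bigr)=\sum_{i=d-m'}^{d}\rho_i$ with $\dim\bigl(\bigoplus_{j=0}^{m'}E_{t+j}W\bigr)=\sum_{j=0}^{m'}\rho_j$ you are implicitly using the shape symmetry $\rho_i=\rho_{d-i}$ (Definition~\ref{def:Wshape}), not just $\dim U_i=\dim E^*_{r+i}W$. With that made explicit, everything checks out, including the globalization step and the identification of the span of displacement-$0$ irreducibles with the sum of the displacement-$0$ summands in the chosen decomposition (justified by Lemmas~\ref{lem:ModODS} and~\ref{lem:orth}).

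The paper's own proof is not a parallel argument: it simply invokes Theorems~5.7 and~6.2 of \cite{ds} to identify $\mathcal N_i$ with the subspaces $V_{i,D-i}=\tilde V_{i,D-i}$ defined there and then reads off $V_0=\sum_i \tilde V_{i,D-i}=\mathcal N$. Your approach reproves, in a self-contained way, the portion of \cite{ds} actually needed here---namely that each $\mathcal N_j$ decomposes along $V=\bigoplus_\alpha W_\alpha$ and that the per-module piece $W\cap\mathcal N_j$ is a single split-decomposition summand $U_{j-r}$ when the displacement is zero and vanishes otherwise (compare Lemma~\ref{lem:NiCalc}, which the paper obtains by citing \cite{kim2,ds}). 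What your route buys is transparency: the reader sees directly why only displacement-$0$ modules survive, via the index inequality $\max(0,t+d-D+j)\leq\min(d,j-r)$ forcing $r+t+d\leq D$. What the paper's route buys is brevity, at the cost of depending on a chain of results from \cite{ds}.
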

\begin{proof} We invoke some results from \cite{ds}. In \cite[Definition~4.3]{ds} the notation $V_\eta$ refers to the subspace of $V$ spanned by the irreducible $T$-modules
with displacement $\eta$. So $V_0$ is the nucleus of $\Gamma$ with respect to $x$. 
In \cite[Definition~5.1]{ds} we defined some subspaces $V_{i,j}$ $(0 \leq i,j\leq D)$  and in 
\cite[Definition~5.5]{ds} we defined some subspaces $\tilde V_{i,j}$ $(0 \leq i,j\leq D)$.
By  \cite[Theorem~6.2(i)]{ds} we have $V_0 = \sum_{i=0}^D \tilde V_{i,D-i}$. 
By \cite[Theorem~5.7]{ds} and \cite[Theorem~6.2(ii)]{ds} we have
$\tilde V_{i,D-i} = V_{i,D-i}$ $(0 \leq i \leq D)$. 
Comparing \cite[Definition~5.1]{ds} and Definition \ref{def:Ni}, we obtain
 $\mathcal N_i = V_{i,D-i}$ $(0 \leq i \leq D)$. 
By these comments and Definition \ref{def:N}, the nucleus of $\Gamma$ with respect to $x$ is equal to
\begin{align*}
V_0 = \sum_{i=0}^D \tilde V_{i,D-i}=  \sum_{i=0}^D V_{i,D-i} = \sum_{i=0}^D \mathcal N_i = \mathcal N.
\end{align*}
\end{proof}

\noindent We have some comments about the subspaces $\lbrace \mathcal N_i\rbrace_{i=0}^D$.
\begin{lemma} \label{lem:AAact} {\rm (See \cite[Theorem~7.1]{ds}.)} We have
\begin{align*}
&(A-\theta_{D-i}I ) \mathcal N_i \subseteq \mathcal N_{i+1} \qquad (0 \leq i \leq D-1), \qquad (A-\theta_0 I) \mathcal N_D=0, \\
&(A^*- \theta^*_i I) \mathcal N_i \subseteq \mathcal N_{i-1} \qquad (1 \leq i \leq D), \qquad (A^*-\theta^*_0 I)\mathcal N_0=0.
\end{align*}
\end{lemma}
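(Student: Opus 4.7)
The plan is to verify the two sets of containments directly from the definition
\[
\mathcal N_i=(E^*_0V+\cdots+E^*_iV)\cap (E_0V+\cdots+E_{D-i}V),
\]
by tracking, for each factor of the intersection separately, how it transforms under $A-\theta_{D-i}I$ or under $A^*-\theta^*_iI$. The two relevant structural inputs are the triple product relations recalled at the end of Section 4, namely $E^*_iAE^*_j=0$ for $|i-j|>1$ and $E_iA^*E_j=0$ for $|i-j|>1$. These give the tridiagonal filtered actions
\[
A\,(E^*_0V+\cdots+E^*_iV)\subseteq E^*_0V+\cdots+E^*_{i+1}V,\qquad
A^*\,(E_0V+\cdots+E_jV)\subseteq E_0V+\cdots+E_{j+1}V.
\]

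To handle the $A^*$ action, I would take $v\in\mathcal N_i$. Writing $v=\sum_{j=0}^{i}v_j$ with $v_j\in E^*_jV$, the operator $A^*-\theta^*_iI$ annihilates the top component $v_i$ and so lands $v$ in $E^*_0V+\cdots+E^*_{i-1}V$. On the other hand, because $v\in E_0V+\cdots+E_{D-i}V$, the second triple product displayed above gives $A^*v\in E_0V+\cdots+E_{D-i+1}V$, and subtracting $\theta^*_iv$ does not leave this subspace. Intersecting the two conclusions yields $(A^*-\theta^*_iI)v\in\mathcal N_{i-1}$, as desired. The argument for $A-\theta_{D-i}I$ is entirely symmetric: the decomposition along the $A$-eigenspaces shows that $A-\theta_{D-i}I$ kills the top component $\theta_{D-i}$ part of $v$ and so drops $v$ to $E_0V+\cdots+E_{D-i-1}V$, while the first triple product implies $A v\in E^*_0V+\cdots+E^*_{i+1}V$, and subtracting a scalar multiple of $v$ preserves that subspace. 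Intersecting gives membership in $\mathcal N_{i+1}$.

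The two boundary statements are immediate: $\mathcal N_D=V\cap E_0V=E_0V$ consists entirely of $\theta_0$-eigenvectors of $A$, so $(A-\theta_0I)\mathcal N_D=0$; and $\mathcal N_0=E^*_0V\cap V=E^*_0V$ consists entirely of $\theta^*_0$-eigenvectors of $A^*$, so $(A^*-\theta^*_0I)\mathcal N_0=0$.

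I do not anticipate a real obstacle here. The only subtlety worth highlighting in the writeup is that the specific scalars $\theta_{D-i}$ and $\theta^*_i$ are precisely the ones that cancel the uppermost piece of the two filtrations appearing in the definition of $\mathcal N_i$; any other scalar would leave a residual component in $E_{D-i}V$ or in $E^*_iV$ and destroy the containment. Once this bookkeeping is made explicit, both lines of the lemma reduce to a two-sentence verification from the triple product relations.
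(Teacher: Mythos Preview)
Your argument is correct. The paper does not supply its own proof of this lemma, merely citing \cite[Theorem~7.1]{ds}; your direct verification from the triple product relations and the eigenspace decompositions is exactly the natural argument and presumably what appears in that reference.
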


\begin{lemma}\label{lem:NiCalc} Let $0 \leq i \leq D$. Then  $\mathcal N_i$ is spanned by the subspaces
\begin{align*}
(E^*_r W + \cdots + E^*_i W) \cap (E_r W+ \cdots + E_{D-i}W),
\end{align*}
where $W$ is an irreducible $T$-module with displacement $0$ and endpoint $r \leq {\rm min}\lbrace i, D-i\rbrace$.
\end{lemma}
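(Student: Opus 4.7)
The plan is to prove both inclusions. For the containment $\supseteq$: given an irreducible $T$-submodule $W$ with displacement $0$ and endpoint $r\leq \min\{i,D-i\}$, the subspace $E^*_rW+\cdots+E^*_iW$ sits inside $E^*_0V+\cdots+E^*_iV$ and $E_rW+\cdots+E_{D-i}W$ sits inside $E_0V+\cdots+E_{D-i}V$, so their intersection lies in $\mathcal{N}_i$ by Definition \ref{def:Ni}.

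For the harder direction $\subseteq$, the idea is to decompose $\mathcal N$ in two compatible ways and match them. First, by Theorem \ref{thm:ND} the subspace $\mathcal N$ is the nucleus and hence a $T$-module (Lemma \ref{lem:Interp}(i)). I would apply Lemma \ref{lem:ModODS} to decompose it as an orthogonal direct sum $\mathcal N=\bigoplus_\alpha W_\alpha$ of irreducible $T$-modules, and argue that each $W_\alpha$ has displacement zero: by Lemma \ref{lem:orth} and Lemma \ref{lem:Interp}(ii), an irreducible summand of positive displacement would lie in $\mathcal N^\perp\cap \mathcal N=0$. So by Proposition \ref{lem:Wdata} each $W_\alpha$ has endpoint $r_\alpha\leq D/2$, dual endpoint $r_\alpha$, and diameter $D-2r_\alpha$.

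Second, Lemma \ref{lem:WSP} applied to each $W_\alpha$ yields $W_\alpha=\bigoplus_{i=r_\alpha}^{D-r_\alpha}U_\alpha^{(i)}$ with
\[
U_\alpha^{(i)}=(E^*_{r_\alpha}W_\alpha+\cdots+E^*_iW_\alpha)\cap(E_{r_\alpha}W_\alpha+\cdots+E_{D-i}W_\alpha),
\]
and I would set $U_\alpha^{(i)}=0$ for $i\notin[r_\alpha,D-r_\alpha]$. The containment $U_\alpha^{(i)}\subseteq\mathcal N_i$ is immediate from Definition \ref{def:Ni}. Combining the two decompositions yields $\mathcal N=\bigoplus_{\alpha,i}U_\alpha^{(i)}$. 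Given $v\in\mathcal N_i$ with unique expansion $v=\sum_{\alpha,j}u_\alpha^{(j)}$, the partial sums $z_j:=\sum_\alpha u_\alpha^{(j)}$ lie in $\mathcal N_j$; the directness of $\sum_j\mathcal N_j$ (Lemma \ref{lem:UDir}) forces $z_j=0$ for $j\neq i$; then the directness of $\sum_\alpha W_\alpha$ forces each individual $u_\alpha^{(j)}$ to vanish for $j\neq i$. Hence $v=\sum_\alpha u_\alpha^{(i)}$, and every nonzero $U_\alpha^{(i)}$ has $r_\alpha\leq\min\{i,D-i\}$ by construction.

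I expect the main conceptual obstacle to be the simultaneous juggling of three direct-sum decompositions of $\mathcal N$, namely $\bigoplus_\alpha W_\alpha$, $\bigoplus_i\mathcal N_i$, and the refined $\bigoplus_{\alpha,i}U_\alpha^{(i)}$, together with the uniqueness statements that let us pass between them. Once the inclusion $U_\alpha^{(i)}\subseteq\mathcal N_i$ is observed, the refined decomposition is compatible with both coarser ones, and the result is a linear-algebraic consequence of uniqueness.
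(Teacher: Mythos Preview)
Your argument is correct. The containment $\supseteq$ is immediate as you say, and for $\subseteq$ your three-decomposition bookkeeping works: the key facts are that the orthogonal decomposition $\mathcal N=\bigoplus_\alpha W_\alpha$ consists only of displacement-zero modules (your use of Lemma~\ref{lem:orth} and Lemma~\ref{lem:Interp}(ii) is the right justification), that Lemma~\ref{lem:WSP} with $r=t=r_\alpha$, $d=D-2r_\alpha$ gives exactly the summands $U_\alpha^{(i)}$ you describe, and that Lemma~\ref{lem:UDir} then isolates the $i$th piece.

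The paper, by contrast, dispatches this lemma with a two-line citation to \cite[Lemma~4.7(i)]{kim2} and \cite[Lemma~6.1]{ds}, deferring the work to the literature on split decompositions for $Q$-polynomial graphs. Your route is more self-contained: it rederives the needed compatibility between the module-wise split decomposition and the global decomposition $\mathcal N=\bigoplus_i \mathcal N_i$ directly from results already in the paper (Lemmas~\ref{lem:ModODS}, \ref{lem:orth}, \ref{lem:WSP}, \ref{lem:UDir}, Proposition~\ref{lem:Wdata}, Theorem~\ref{thm:ND}). The cost is a slightly longer argument; the benefit is that a reader does not need to chase the external references.
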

\begin{proof} By \cite[Lemma~4.7(i)]{kim2} and \cite[Lemma~6.1]{ds}.
\end{proof}

\begin{lemma} \label{lem:TwoSum} The following sums are orthogonal and direct:
\begin{align*}
\mathcal N = \sum_{i=0}^D E_i \mathcal N,  \qquad \qquad 
\mathcal N = \sum_{i=0}^D E^*_i \mathcal N.
\end{align*}
\end{lemma}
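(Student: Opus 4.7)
The plan is to derive both decompositions directly from the fact, recorded in Lemma \ref{lem:Interp}(i), that $\mathcal N$ is a $T$-module, together with the standard orthogonal eigenspace decompositions of $V$ for $A$ and $A^*$. Since the two assertions are symmetric under interchange of $E_i \leftrightarrow E^*_i$, I would prove them in parallel.

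First I would observe that $\mathcal N$ is closed under the action of every primitive idempotent $E_i$: indeed $E_i \in M \subseteq T$, and $\mathcal N$ is a $T$-module, so $E_i \mathcal N \subseteq \mathcal N$. Using $I = \sum_{i=0}^D E_i$, every $v \in \mathcal N$ satisfies $v = \sum_{i=0}^D E_i v \in \sum_{i=0}^D E_i \mathcal N$, which gives the containment $\mathcal N \subseteq \sum_{i=0}^D E_i \mathcal N$. The reverse containment is immediate from $E_i \mathcal N \subseteq \mathcal N$. The identical argument with $E^*_i \in M^* \subseteq T$ handles the second sum.

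Next I would check orthogonality. For $i \neq j$ and $u \in E_i V$, $w \in E_j V$, the identity \eqref{eq:AABil} together with $E_i^{\rm t} = E_i$, $\overline{E_i} = E_i$, and $E_i E_j = \delta_{i,j} E_i$ gives
\begin{align*}
\langle u, w \rangle = \langle E_i u, E_j w \rangle = \langle u, \overline{E_i}^{\rm t} E_j w \rangle = \langle u, E_i E_j w \rangle = 0.
\end{align*}
Since $E_i \mathcal N \subseteq E_i V$, the subspaces $\{E_i \mathcal N\}_{i=0}^D$ are pairwise orthogonal, and orthogonality forces the sum to be direct. The same computation with $E^*_i$ in place of $E_i$ yields the corresponding statement for the second decomposition.

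There is no substantive obstacle here; the statement is essentially a bookkeeping consequence of the $T$-module property of $\mathcal N$ and the orthogonal eigenspace decompositions $V = \sum_{i=0}^D E_i V$ and $V = \sum_{i=0}^D E^*_i V$ recorded in \eqref{eq:esd} and \eqref{eq:AsED}. The only point that requires a line of care is verifying that $\mathcal N$ is stable under each individual $E_i$ and each $E^*_i$, which follows because these lie in $T$ and $\mathcal N$ is a $T$-module by Lemma \ref{lem:Interp}(i).
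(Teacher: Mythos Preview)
Your proposal is correct and takes essentially the same approach as the paper: both arguments rest on the fact that $\mathcal N$ is a $T$-module, from which the orthogonal direct-sum decompositions follow immediately via $I = \sum_i E_i = \sum_i E^*_i$ and the orthogonality of the eigenspaces. The only minor omission is that, since in this section $\mathcal N$ is the subspace of Definition~\ref{def:N}, you should also invoke Theorem~\ref{thm:ND} (as the paper does) to identify it with the nucleus of Definition~\ref{def:NUC} before applying Lemma~\ref{lem:Interp}(i).
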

\begin{proof} The subspace $\mathcal N$ is a $T$-module by Lemma \ref{lem:Interp} and Theorem \ref{thm:ND}.
\end{proof}

\noindent To motivate the next result, we have some comments. 
Let $W$ denote an irreducible $T$-submodule of $\mathcal N$. Define the support set
\begin{align*}
{\rm Supp}(W)= \lbrace r, r+1, \ldots, D-r\rbrace \qquad \qquad \hbox{\rm ($r = $ endpoint of $W$).}
\end{align*}
Using Proposition \ref{lem:Wdata} we find that for $0 \leq i \leq D$,
\begin{align*}
{\rm dim}\,E_i W ={\rm dim}\,E_{D-i}W ={\rm dim}\,E^*_i W ={\rm dim}\,E^*_{D-i}W
= \begin{cases} 1, & \mbox{if  $ i \in {\rm Supp}(W)$}; \\
                                             0, & \mbox{if  $ i \not\in {\rm Supp}(W)$}.
                         \end{cases}
\end{align*}
By Lemma \ref{lem:ModODS}, the $T$-module  $\mathcal N$ is an orthogonal direct sum of irreducible $T$-modules; write
\begin{align}
\mathcal N = \sum_W W \qquad  \hbox{\rm  (orthogonal direct sum).}
  \label{eq:NODS}  
  \end{align}
  
  \noindent The following definition is motivated by Proposition \ref{lem:nucISO}.
  
  \begin{definition} \label{def:di} \rm
   For $0 \leq r \leq D/2$,   let ${\rm mult}_r$ denote the number of summands $W$ in \eqref{eq:NODS}  that have endpoint $r$.
   Note that ${\rm mult}_r$ is a nonnegative integer.
\end{definition}
\begin{remark}\rm We have ${\rm mult}_0=1$ by Example \ref{ex:primary} and Definition \ref{def:di}.
\end{remark}

\begin{proposition} \label{prop:Six} For $0 \leq i \leq D/2$, the following subspaces have dimension $\sum_{r=0}^i {\rm mult}_r$:  
\begin{align}
E_i \mathcal N, \qquad
E_{D-i} \mathcal N, \qquad
E^*_i \mathcal N, \qquad
E^*_{D-i} \mathcal N, \qquad 
\mathcal N_i, \qquad \mathcal N_{D-i}.
 \label{eq:5dim}
\end{align}
\end{proposition}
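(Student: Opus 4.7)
The plan is to use the orthogonal decomposition (\ref{eq:NODS}) of $\mathcal N$ as a sum of irreducible $T$-submodules $W$ and count the contributions to each of the six subspaces in (\ref{eq:5dim}). By Proposition \ref{lem:Wdata}, every such $W$ is thin, with dual endpoint equal to its endpoint $r$ and diameter $D-2r$. Lemma \ref{lem:rtd} together with thinness then yields that $\dim E_j W$ and $\dim E^*_j W$ equal $1$ if $r \leq j \leq D-r$ and $0$ otherwise. The preliminary observation underlying the count is that when $0 \leq i \leq D/2$ and $0 \leq r \leq D/2$, each of the conditions $r \leq i \leq D-r$ and $r \leq D-i \leq D-r$ is equivalent to the single inequality $r \leq i$.

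The four subspaces $E_i \mathcal N, E_{D-i}\mathcal N, E^*_i \mathcal N, E^*_{D-i}\mathcal N$ are handled at once: since $E_i, E_{D-i}, E^*_i, E^*_{D-i} \in T$, each preserves every summand $W$ of (\ref{eq:NODS}), and the induced sums (for example, $\sum_W E_i W$) are direct. By the preliminary observation, each of these four subspaces has dimension equal to the number of summands $W$ of endpoint $r \leq i$, which equals $\sum_{r=0}^i {\rm mult}_r$ by Definition \ref{def:di}.

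For $\mathcal N_i$ I proceed analogously. For any $v \in \mathcal N_i$ decomposed according to (\ref{eq:NODS}) as $v = \sum_W v_W$ with $v_W \in W$, applying the projections $E^*_j$ and $E_j$ shows that each $v_W$ again lies in $\mathcal N_i$; hence $\mathcal N_i = \sum_W (\mathcal N_i \cap W)$ with the sum direct, and
\begin{align*}
\mathcal N_i \cap W = (E^*_r W + \cdots + E^*_i W) \cap (E_r W + \cdots + E_{D-i} W).
\end{align*}
By Lemma \ref{lem:WSP} applied to $W$, this intersection is the $(i-r)$-th summand in the displayed decomposition of $W$, so its dimension is $\rho_{i-r}$; thinness forces this to be $1$ when $r \leq i$ (and $0$ when $r > i$, since the first factor is then empty). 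Summing gives $\dim \mathcal N_i = \sum_{r=0}^i {\rm mult}_r$, and the symmetric argument with $i$ and $D-i$ interchanged (using $\min\{i, D-i\} = i$ once more) yields the same count for $\mathcal N_{D-i}$. The only real obstacle is the bookkeeping that verifies the various index conditions all collapse to $r \leq i$ in the regime $0 \leq i \leq D/2$; beyond that the argument is a direct application of Proposition \ref{lem:Wdata}, Lemma \ref{lem:rtd}, and Lemma \ref{lem:WSP}.
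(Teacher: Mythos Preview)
Your argument is correct and follows essentially the same route as the paper. For the four subspaces $E_i\mathcal N$, $E_{D-i}\mathcal N$, $E^*_i\mathcal N$, $E^*_{D-i}\mathcal N$ the two proofs are identical; for $\mathcal N_i$ and $\mathcal N_{D-i}$ the paper cites Lemma~\ref{lem:NiCalc} together with Lemma~\ref{lem:WSP}, whereas you prove the content of Lemma~\ref{lem:NiCalc} inline (showing $\mathcal N_i=\sum_W(\mathcal N_i\cap W)$ via the projection argument) and then apply Lemma~\ref{lem:WSP} in the same way.
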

\begin{proof}   To start, we show that the first four subspaces in  \eqref{eq:5dim} have dimension $\sum_{r=0}^i {\rm mult}_r$.
Pick $E \in \lbrace E_i, E_{D-i}, E^*_i, E^*_{D-i}\rbrace$ and
apply $E$ to each side of   \eqref{eq:NODS}. This yields
\begin{align}
E \mathcal N = \sum_W E W \qquad  \hbox{\rm  (orthogonal direct sum).}
  \label{eq:NODS2}  
  \end{align}
Observe that 
\begin{align*}
{\rm dim}\,E \mathcal N = \sum_W {\rm dim}\,EW =  \sum_{\stackrel{W}{ i \in {\rm Supp}(W) }} 1 = \sum_{r=0}^i {\rm mult}_r.
\end{align*}
We have shown that the first four subspaces in  \eqref{eq:5dim} have dimension $\sum_{r=0}^i {\rm mult}_r$.
Similarly, using Lemmas  \ref{lem:WSP}, \ref{lem:NiCalc} we find that $\mathcal N_i$ and $\mathcal N_{D-i}$ have dimension $\sum_{r=0}^i {\rm mult}_r$.
\end{proof}

\section{The nucleus for some elementary examples}
We continue to discuss the $Q$-polynomial distance-regular graph $\Gamma=(X,\mathcal R)$.
Throughout this section, we fix $x \in X$ and write $T=T(x)$. We compute the nucleus of $\Gamma$ with respect to $x$, under the assumption that $\Gamma$
belongs to some elementary families of examples. In the next section, we will consider a more substantial family of examples.

\begin{example} \label{ex:cube} \rm Assume that $\Gamma$ is a $D$-cube \cite{go}. It is shown in \cite[Theorems~6.3,~8.1]{go} that each irreducible $T$-module has
displacement 0. Therefore, the nucleus of $\Gamma$ with respect to $x$ is equal to the standard module $V$.
\end{example}

\noindent The $D$-cube is a bipartite antipodal $2$-cover.

\begin{example}\label{ex:2Hom} \rm Assume that  $\Gamma$ is a bipartite antipodal 2-cover (this property is often  called $2$-homogeneous, see \cite[Theorem 42]{curtin2H}).
It is shown in \cite[Theorem~4.1]{curtin2HT} that each irreducible $T$-module has
displacement 0. Therefore, the nucleus of $\Gamma$ with respect to $x$ is equal to the standard module $V$.
\end{example}

\begin{example}\rm Assume that $\Gamma$ is the Odd graph $O_{D+1}$ \cite[p.~374]{Nbbit}, \cite[Section~6]{tSub3}. It is shown in \cite[Example~6.1(2)]{tSub3}
that for each irreducible $T$-module $W$ the endpoint $r$ and diameter $d$ satisfy $r+d=D$. Consequently, $W$ has displacement 0
if and only if $W$ is primary. Therefore, the nucleus of $\Gamma$ with respect to $x$ is equal to the primary $T$-module.
\end{example}

\begin{example} \rm Assume that $\Gamma$ is a Hamming graph $H(D,N)$ with $N\geq 3$, see \cite{mamart}. 
By construction, the vertex set $X$ of $\Gamma$ has cardinality $N^D$.
Write  $\mathcal N_i = \mathcal N_i(x)$ $(0 \leq i \leq D)$.
By \cite[Lemma~19]{mamart} the subspace $\mathcal N_i$ has dimension $\binom{D}{i}$ for $0 \leq i \leq D$.
By this along with Lemma \ref{lem:UDir}, Definition \ref{def:N} and Theorem \ref{thm:ND}, the nucleus of $\Gamma$ with respect to $x$ has dimension $2^D$.
\end{example}

\section{Dual polar graphs}

In this section, we review a family of $Q$-polynomial distance-regular graphs called the dual polar graphs. 
Our main goal for the rest of the paper, 
is to describe the nucleus of a dual polar graph with respect to any  given vertex.
\medskip
\begin{example} \label{ex:dp} \rm  (See \cite[p.~352]{Nbbit}, \cite[p.~303]{Nbannai}, \cite[Example~6.1(6)]{tSub3}.)
Let $\bf U$ denote a finite vector space with one of the following nondegenerate forms:
\begin{align*}
\begin{tabular}[t]{ccccc}
{\rm name }& ${\rm dim}(\bf U)$ & {\rm field} & form &  $e$
 \\
 \hline
$ B_D(p^n) $ & $2D+1$ & $GF(p^n)$ & quadratic & $0$ \\ 
$ C_D(p^n) $ & $2D$ & $GF(p^n)$ & symplectic & $0$ \\ 
$ D_D(p^n) $ & $2D$ & $GF(p^n)$ & quadratic & $-1$ \\ 
&&& {\rm (Witt index $D$)} & \\
$ {}^2 D_{D+1}(p^n) $ & $2D+2$ & $GF(p^n)$ & quadratic & $1$ \\ 
&&& {\rm (Witt index $D$)} & \\
$ {}^2A_{2D}(p^n) $ & $2D+1$ & $GF(p^{2n})$ & Hermitean & $1/2$ \\ 
$ {}^2A_{2D-1}(p^n) $ & $2D$ & $GF(p^{2n})$ & Hermitean & $-1/2$
    \end{tabular}
\end{align*}
A subspace of $\bf U$ is called {\it isotropic} whenever the form vanishes completely
on that subspace. In each of the above cases, the dimension of any maximal
isotropic subspace is $D$.
The corresponding dual polar graph $\Gamma$ has vertex set $X$ consisting of the maximal isotropic subspaces of $\bf U$. Vertices $y,z \in X$ are adjacent whenever $y \cap z$ has dimension $D-1$. More generally, $\partial(y,z) = D - {\rm dim}\,y \cap z$.
The graph $\Gamma$ is distance-regular with diameter $D$ and intersection numbers 
\begin{align*}
c_i = \frac{q^i-1}{q-1}, \qquad \quad
a_i = (q^{e+1}-1) \frac{q^i-1}{q-1}, \qquad \quad
b_i = q^{e+1}\frac{q^D-q^i}{q-1} 
\end{align*}
for $0 \leq i \leq D$, where $q = p^n, p^n, p^n, p^n, p^{2n}, p^{2n}$. The graph $\Gamma$ is a regular near $2D$-gon in the sense of \cite[Section~6.4]{Nbcn}. 
\end{example}

\begin{remark} \label{rem:U} \rm We refer to Example \ref{ex:dp}. There is a well-known poset consisting of the isotropic subspaces of $\bf U$ and the inclusion relation; 
see for example \cite{artin}, \cite[Section~9.4]{Nbcn}, \cite{Ncameron}, \cite{stanton}. The poset is uniform \cite[Section~3]{uniform}. The poset is a regular quantum matroid \cite[Example~40.1(5)]{qm}.
\end{remark}

\noindent From now on, we assume that $\Gamma$ is a dual polar graph as in Example \ref{ex:dp}. We keep our assumption that $D\geq 1$.
Note that $\Gamma$
is bipartite iff $e=-1$ iff $a_1 =0$. Sometimes we will assume that $\Gamma$ is nonbipartite.

\begin{lemma} \label{ex:dpg2}  {\rm (See \cite[p.~352]{Nbbit}, \cite[p.~303]{Nbannai}, \cite[Example~6.1(6)]{tSub3}.)}
The graph  $\Gamma $ has a $Q$-polynomial structure such that
\begin{align*}
\theta_i &= q^{e+1} \frac{q^D-1}{q-1}-\frac{(q^i-1)(q^{D+e+1-i}+1)}{q-1} \qquad \quad (0 \leq i \leq D), \\
\theta^*_i &= \frac{q^{D+e}+q}{q^e+1} \, \frac{q^{-i}(q^{D+e}+1)-q^e-1}{q-1} \qquad (0 \leq i \leq D).
\end{align*}
\end{lemma}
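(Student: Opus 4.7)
The plan is to verify both closed-form expressions using the intersection array recorded in Example \ref{ex:dp} together with the standard correspondence between the Bose-Mesner/dual Bose-Mesner algebras and tridiagonal intersection matrices. Distance-regularity and the intersection numbers are already supplied by the classical sources cited in Example \ref{ex:dp}, so what remains is (a) to identify the $Q$-polynomial ordering of the primitive idempotents, and (b) to verify the two eigenvalue formulas against the appropriate three-term recurrences.

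For the formula for $\theta_i$: the eigenvalues of $A$ coincide with those of the tridiagonal intersection matrix $L_1$ whose subdiagonal, diagonal, and superdiagonal entries are $c_i, a_i, b_i$. I would introduce the scalar polynomials $u_0 = 1$, $u_1(\lambda)=\lambda$, and $c_{i+1} u_{i+1}(\lambda) = (\lambda - a_i)u_i(\lambda) - b_{i-1} u_{i-1}(\lambda)$, so that the eigenvalues of $L_1$ are exactly the roots of $u_{D+1}$. Substituting the formulas for $c_i, a_i, b_i$ from Example \ref{ex:dp}, I would verify $u_{D+1}(\theta_i)=0$ by a direct $q$-calculation; the natural way to organize it is to write $\theta_i = q^{e+1}[D] - [i][D{+}e{+}1{-}i]^{\sharp}$ in terms of $q$-numbers and to recognize the recurrence as the one obeyed by the dual $q$-Hahn polynomials on the relevant grid. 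The ordering $\lbrace \theta_i\rbrace_{i=0}^D$ is pinned down by requiring $\theta_0 = k = q^{e+1}[D]$.

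For the $Q$-polynomial property and the formula for $\theta^*_i$: since $\Gamma$ is a regular near $2D$-gon whose associated poset is a regular quantum matroid (Remark \ref{rem:U}), the primary $T$-module carries a Leonard pair by the general theory of \cite{qm} and \cite{LS99}. This simultaneously identifies the correct ordering $\lbrace E_i\rbrace_{i=0}^D$ as $Q$-polynomial (via the triple product relations $E_iA^*E_j \ne 0$ iff $|i-j|=1$) and guarantees that the dual intersection numbers $c^*_i, a^*_i, b^*_i$ defined from the Krein parameters assemble into a tridiagonal dual intersection matrix $L_1^*$ whose eigenvalues are the $\theta^*_i$. A computation strictly parallel to the one for $\theta_i$, applied to $L_1^*$, then yields the stated closed form for $\theta^*_i$; alternatively, one may read both eigenvalue sequences directly off the parametrization of dual $q$-Hahn type Leonard pairs, thereby sidestepping the polynomial manipulation at the cost of invoking the Leonard pair classification.

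The main obstacle is purely computational: the $q$-series bookkeeping needed to confirm that the candidate $\theta_i$ and $\theta^*_i$ satisfy their respective three-term recurrences is lengthy and must be done case-by-case in the parameter $e \in \lbrace 0, \pm 1, \pm 1/2\rbrace$. I would manage this uniformly by keeping all expressions in terms of $[i]=(q^i-1)/(q-1)$ and the single parameter $e$, reducing each identity to the basic relation $[i+j]=[i]+q^i[j]$ and its variants.
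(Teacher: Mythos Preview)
The paper gives no proof of this lemma at all: it is stated with citations to \cite{Nbbit,Nbannai,tSub3} and left at that, with the subsequent two remarks recording that the Leonard system is of dual $q$-Krawtchouk type and that $\Gamma$ has classical parameters $(D,q,0,q^{e+1})$. So your proposal is not an alternative route to the paper's argument; it is a route where the paper has none.

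Your plan is basically sound, but two points deserve attention. First, the Leonard pair on the primary module is of \emph{dual $q$-Krawtchouk} type, not dual $q$-Hahn; this matters if you intend to read the eigenvalue sequences off a known parametrization rather than grind through the recurrence. Second, and more substantively, your justification of the $Q$-polynomial property is close to circular as written: saying ``the primary $T$-module carries a Leonard pair'' already presupposes a $Q$-polynomial ordering of the $E_i$, since $A^*$ is only defined after such an ordering has been fixed. If you want an honest argument here, either compute the Krein parameters $q^h_{1,j}$ directly from the eigenmatrices (the classical-parameters framework in \cite[Chapter~8]{Nbcn} does this uniformly), or invoke the balanced-set / inequality characterizations of the $Q$-polynomial property that do not require $A^*$ up front. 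Finally, your remark that the verification ``must be done case-by-case in the parameter $e$'' is not right: the intersection numbers and both eigenvalue formulas are rational functions of $q$ and $q^e$ uniformly, so a single symbolic computation in these two indeterminates suffices.
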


\begin{remark}\rm (See \cite[Example~20.7]{LSnotes}.)
The $Q$-polynomial structure in Lemma \ref{ex:dpg2}   has dual $q$-Krawtchouk type, with parameters
\begin{align*}
&s = -q^{-D-e-2},  \qquad \quad h = \frac{q^{D+e+1}}{q-1}, \qquad \quad h^*= \frac{(q^{D+e}+1)(q^{D+e}+q)}{(q-1)(q^e+1)}.
\end{align*}
\end{remark}

\begin{remark}\rm (See \cite[p.~194]{Nbcn}.) The $Q$-polynomial structure in Lemma \ref{ex:dpg2}  has classical parameters $(D,b,\alpha, \beta)$ with
\begin{align*}
b=q, \qquad \quad \alpha=0, \qquad \quad \beta = q^{e+1}.      
\end{align*}
\end{remark}

\noindent Note that $q^{e+1}=a_1+1$. It is often convenient to write things in terms of $a_1$ instead of $e$. 

\begin{lemma} \label{lem:inter} The intersection numbers of $\Gamma$ are given by
\begin{align*}
c_i = \frac{q^i-1}{q-1}, \qquad \quad
a_i = a_1 \frac{q^i-1}{q-1}, \qquad \quad
b_i = (a_1+1)\frac{q^D-q^i}{q-1}
\end{align*}
\noindent for $0 \leq i \leq D$. 
\end{lemma}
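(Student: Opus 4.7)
The plan is to deduce this lemma directly from the intersection number formulas already given in Example \ref{ex:dp} and the observation (stated immediately before the lemma) that $q^{e+1}=a_1+1$. First I would justify that observation: specializing the formula $a_i = (q^{e+1}-1)\frac{q^i-1}{q-1}$ from Example \ref{ex:dp} at $i=1$ gives
\begin{align*}
a_1 = (q^{e+1}-1)\cdot \frac{q-1}{q-1} = q^{e+1}-1,
\end{align*}
so $q^{e+1}=a_1+1$, as claimed.

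Next I would substitute $q^{e+1}=a_1+1$ into the three formulas of Example \ref{ex:dp}. The formula for $c_i$ does not involve $e$, so it is unchanged. For $a_i$ we have
\begin{align*}
a_i = (q^{e+1}-1)\,\frac{q^i-1}{q-1} = a_1\,\frac{q^i-1}{q-1}.
\end{align*}
For $b_i$ we have
\begin{align*}
b_i = q^{e+1}\,\frac{q^D-q^i}{q-1} = (a_1+1)\,\frac{q^D-q^i}{q-1}.
\end{align*}
These are precisely the expressions claimed in the lemma.

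There is no real obstacle here; the result is purely a rewriting of the intersection numbers of Example \ref{ex:dp} using the parameter $a_1$ in place of $e$. The only step that requires any thought is confirming $a_1=q^{e+1}-1$, which is an immediate specialization of the $a_i$ formula at $i=1$.
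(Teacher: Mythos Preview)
Your proposal is correct and follows the same approach as the paper: the paper states the lemma without proof immediately after remarking that $q^{e+1}=a_1+1$, so the intended argument is exactly the substitution you carry out.
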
 

\begin{lemma} \label{lem:ev} The eigenvalues of $\Gamma$ from Lemma \ref{ex:dpg2} are given by
\begin{align*}
 \theta_i           &= \frac{(a_1+1) q^{D-i} - q^i - a_1}{q-1} \qquad \quad (0 \leq i \leq D).
\end{align*}
\end{lemma}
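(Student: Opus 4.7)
The proof is a direct algebraic simplification of the formula for $\theta_i$ in Lemma \ref{ex:dpg2}, using the identity $q^{e+1}=a_1+1$ noted just before Lemma \ref{lem:inter}. My plan is to substitute this identity in two places in the formula for $\theta_i$, clear the common denominator $q-1$, and expand and collect terms.

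More precisely, starting from
\begin{align*}
\theta_i = q^{e+1}\,\frac{q^D-1}{q-1} - \frac{(q^i-1)(q^{D+e+1-i}+1)}{q-1},
\end{align*}
I would first observe that $q^{e+1}=a_1+1$ and, consequently, $q^{D+e+1-i}=q^{e+1}q^{D-i}=(a_1+1)q^{D-i}$. Multiplying through by $q-1$, the identity to verify becomes
\begin{align*}
(a_1+1)q^{D-i}-q^i-a_1 \;=\; (a_1+1)(q^D-1)-(q^i-1)\bigl((a_1+1)q^{D-i}+1\bigr).
\end{align*}
Expanding the right-hand side gives $(a_1+1)q^D-(a_1+1)-(a_1+1)q^D+(a_1+1)q^{D-i}-q^i+1$, which telescopes to $(a_1+1)q^{D-i}-q^i-a_1$, matching the left-hand side.

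There is no real obstacle here: the only thing to be careful about is recording correctly the substitution $q^{e+1}=a_1+1$ (which comes from Example \ref{ex:dp} together with the formula for $a_1$ in Lemma \ref{lem:inter}) and keeping the signs straight when expanding $(q^i-1)((a_1+1)q^{D-i}+1)$. The result then holds uniformly for $0\le i\le D$.
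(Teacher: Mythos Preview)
Your proposal is correct; the paper states this lemma without proof, treating it as an immediate rewriting of the formula in Lemma~\ref{ex:dpg2} via the substitution $q^{e+1}=a_1+1$. Your explicit algebraic verification is exactly the intended (and only reasonable) argument.
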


\noindent We bring in some notation. For an integer $n\geq 0$ define
\begin{align*} 
\lbrack n \rbrack_q = \frac{q^n-1}{q-1}.
\end{align*}
\noindent We further define
\begin{align*}
\lbrack n \rbrack^!_q = \lbrack n \rbrack_q \lbrack n-1 \rbrack_q \cdots \lbrack 2 \rbrack_q \lbrack 1 \rbrack_q.
\end{align*}
We interpret $\lbrack 0 \rbrack^!_q=1$. For $0 \leq i \leq n$ define the $q$-binomial coefficient
\begin{align*}
\binom{n}{i}_q = \frac{\lbrack n \rbrack^!_q}{\lbrack i \rbrack^!_q \lbrack n-i \rbrack^!_q}.
\end{align*}

\begin{lemma} \label{lem:ki} {\rm (See \cite[p.~354]{Nbbit}.)} The valencies of $\Gamma$ are
\begin{align*}
k_i = (a_1+1)^i q^{\binom{i}{2}}  \binom{D}{i}_q \qquad \quad (0 \leq i \leq D).
\end{align*}
\noindent In particular,
\begin{align*}
k = (a_1+1) \frac{q^D-1}{q-1}, \qquad \qquad           k_D = (a_1+1)^D q^{\binom{D}{2}}.
\end{align*}
\end{lemma}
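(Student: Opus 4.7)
The plan is to evaluate $k_i$ by substituting the explicit intersection numbers from Lemma \ref{lem:inter} into the general recursive formula \eqref{eq:ki}, namely
\begin{align*}
k_i = \frac{b_0 b_1 \cdots b_{i-1}}{c_1 c_2 \cdots c_i} \qquad (0 \leq i \leq D),
\end{align*}
and then rewriting the result in $q$-factorial notation.

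First I would rewrite the intersection numbers from Lemma \ref{lem:inter} using the $q$-bracket $[n]_q = (q^n-1)/(q-1)$. This gives $c_j = [j]_q$ for $1 \leq j \leq i$ and, via the identity $(q^D - q^j)/(q-1) = q^j [D-j]_q$, also $b_j = (a_1+1) q^j [D-j]_q$ for $0 \leq j \leq D-1$. Multiplying these out yields
\begin{align*}
c_1 c_2 \cdots c_i = [i]^!_q, \qquad b_0 b_1 \cdots b_{i-1} = (a_1+1)^i \, q^{\,0+1+\cdots+(i-1)}\, [D]_q [D-1]_q \cdots [D-i+1]_q.
\end{align*}

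Next I would collect the exponent of $q$ as $0+1+\cdots+(i-1)=\binom{i}{2}$, and recognize the product $[D]_q [D-1]_q \cdots [D-i+1]_q = [D]^!_q / [D-i]^!_q$. Dividing through gives
\begin{align*}
k_i = \frac{(a_1+1)^i q^{\binom{i}{2}} [D]^!_q}{[i]^!_q [D-i]^!_q} = (a_1+1)^i q^{\binom{i}{2}} \binom{D}{i}_q,
\end{align*}
which is the desired formula. Finally, I would record the two special cases by direct substitution: $i=1$ gives $k = k_1 = (a_1+1)[D]_q = (a_1+1)(q^D-1)/(q-1)$, and $i=D$ gives $k_D = (a_1+1)^D q^{\binom{D}{2}}$ since $\binom{D}{D}_q = 1$.

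Since the argument is a direct algebraic manipulation from formulas already available in the paper, there is no genuine obstacle; the only care required is the routine bookkeeping to convert $(q^D-q^j)/(q-1)$ into $q^j [D-j]_q$ and to sum the exponents of $q$ correctly.
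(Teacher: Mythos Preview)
Your proof is correct and follows exactly the approach indicated in the paper, which simply says ``Use \eqref{eq:ki}''; you have spelled out the routine algebraic details of that substitution.
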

\begin{proof} Use \eqref{eq:ki}.
\end{proof}

\section{The irreducible $T$-modules for a dual polar graph }

\noindent We continue to discuss the dual polar graph $\Gamma=(X,\mathcal R)$. Throughout this section, we  fix $x \in X$ and write $T=T(x)$.
Let $W$ denote an irreducible $T$-module. Then $W$ is thin by \cite[Example~6.1(6)]{tSub3}. In this section we discuss the intersection numbers of $W$;  see
 \cite{cerzo,tSub2, tSub3} or \cite[Section~14]{boyd} for the definitions and basic facts
about these parameters.

\begin{lemma} \label{lem:TW} {\rm (See \cite{genQuad}, \cite[p.~200]{tSub3}, \cite[Lemma~17.2]{boyd}.)}
Let $W$ denote an irreducible $T$-module, with endpoint $r$, dual endpoint $t$, and diameter $d$. The intersection numbers of $W$
are described as follows.  For $0 \leq i \leq d$,
\begin{align*}
c_i(W) &= q^t \frac{q^i-1}{q-1}, \\
a_i(W) &= \frac{ (a_1+1)q^{D-d-t+i} -q^{t+i} -a_1}{q-1}, \\
b_i(W)&= (a_1+1) \frac{q^{D-t}-q^{D-d-t+i} }{q-1}.
\end{align*}
\end{lemma}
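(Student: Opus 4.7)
The plan is to invoke the general theory of thin irreducible $T$-modules for regular near polygons, developed in \cite{genQuad}. By Example \ref{ex:dp} the graph $\Gamma$ is a regular near $2D$-gon, and by \cite[Example~6.1(6)]{tSub3} every irreducible $T$-module of $\Gamma$ is thin; in particular $W$ is thin, so the hypotheses of the near polygon theory are satisfied.

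First I would fix a nonzero $w_0 \in E^*_r W$ and use the thinness of $W$ together with the triple product relation $E^*_j A E^*_k = 0$ for $\vert i-j \vert >1$ (from Section~4) to inductively build a basis $\lbrace w_i \rbrace_{i=0}^d$ with $w_i \in E^*_{r+i} W$, normalized so that the action of $A$ takes the tridiagonal form
\begin{align*}
A w_i = c_{i+1}(W)\, w_{i+1} + a_i(W)\, w_i + b_{i-1}(W)\, w_{i-1} \qquad (0 \leq i \leq d),
\end{align*}
with $w_{-1}=0$ and $w_{d+1}=0$. By definition, the coefficients appearing here are the intersection numbers of $W$. Note that by Lemma~\ref{lem:WTD} and Lemma~\ref{lem:THandLS} the restriction of $\Phi = (A;\lbrace E_{t+i}\rbrace; A^*; \lbrace E^*_{r+i}\rbrace)$ to $W$ is a Leonard system, and the eigenvalue and dual eigenvalue data of this Leonard system are supplied by Lemma~\ref{lem:ev} (shifted by $t$) and by the dual eigenvalues $\theta^*_{r+i}$ (shifted by $r$).

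Next I would apply the structural result for thin irreducible $T$-modules of regular near polygons from \cite[Lemma~17.2]{boyd} (see also the original derivation in \cite{genQuad} and \cite[p.~200]{tSub3}), which expresses each of $c_i(W), a_i(W), b_i(W)$ as an explicit combination of the parameters $t, d$ and the intersection numbers $c_j, a_j, b_j$ of $\Gamma$. Substituting the closed forms from Lemma \ref{lem:inter} and simplifying the resulting $q$-rational expressions yields the stated formulas. As sanity checks, one verifies directly that $c_0(W) = 0$ and $b_d(W) = 0$, and that the primary case $r = t = 0$, $d = D$ recovers the intersection numbers of $\Gamma$ (which it does, e.g. $a_i(W)\vert_{r=t=0,d=D} = (a_1 q^i - a_1)/(q-1) = a_1 [i]_q = a_i$).

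The main obstacle lies in the underlying near polygon result, namely the derivation of how the scalars $c_i(W), a_i(W), b_i(W)$ depend on $t$ and $d$ from the near polygon geodesic axioms and the classical parameter data $(D, q, 0, a_1+1)$. This inductive geodesic analysis is carried out in detail in \cite{genQuad} and polished in \cite[Lemma~17.2]{boyd}, so in a short proof I would simply cite it; the remaining content is the routine $q$-algebraic simplification, together with the observation (itself a feature of near polygons with classical parameters) that the intersection numbers of $W$ depend on $t$ and $d$ but not on the endpoint $r$.
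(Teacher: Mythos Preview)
Your proposal is correct and aligns with the paper's treatment: the paper gives no proof at all for this lemma, simply citing \cite{genQuad}, \cite[p.~200]{tSub3}, and \cite[Lemma~17.2]{boyd} in the statement, and you invoke exactly the same sources while supplying a helpful sketch (near-polygon thinness, tridiagonal basis, substitution of Lemma~\ref{lem:inter}) that the paper omits. Your sanity checks are accurate, and the observation that the formulas depend only on $t$ and $d$ (not $r$) is a correct and useful remark.
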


\noindent Referring to Lemma \ref{lem:TW}, we are mainly interested in $a_i(W)$. In the next result, we clarify the meaning of $a_i(W)$.

\begin{lemma} \label{lem:aiWM} {\rm (See \cite[Lemma~5.9]{cerzo}.)} Let $W$ denote an irreducible $T$-module, with endpoint $r$ and diameter $d$. Then the following holds on $W$:
\begin{align*}
E^*_{r+i}A E^*_{r+i} = a_i (W) E^*_{r+i} \qquad \qquad (0 \leq i \leq d).
\end{align*}
\end{lemma}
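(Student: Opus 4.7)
The plan is to unfold the definitions. Because $\Gamma$ is a dual polar graph, every irreducible $T$-module is thin, so the shape of $W$ is all ones. In particular, $\dim E^*_{r+i}W = 1$ for $0 \leq i \leq d$, and $E^*_j W = 0$ for $j \not\in \lbrace r,r+1,\ldots,r+d\rbrace$. Thus $W$ decomposes orthogonally as $W = \bigoplus_{i=0}^d E^*_{r+i}W$, and on $W$ the idempotent $E^*_{r+i}$ acts as the projection onto the line $E^*_{r+i}W$ with kernel $\bigoplus_{j\neq i} E^*_{r+j}W$.

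Next, for each $0 \leq i \leq d$ pick a nonzero vector $w_i \in E^*_{r+i}W$; then $\lbrace w_i\rbrace_{i=0}^d$ is a basis of $W$. By Lemma \ref{lem:AAW}(ii) we have $A E^*_{r+i}W \subseteq E^*_{r+i-1}W + E^*_{r+i}W + E^*_{r+i+1}W$, so there exist scalars $\gamma_{i,i-1},\gamma_{i,i},\gamma_{i,i+1} \in \mathbb C$ with
\begin{align*}
A w_i = \gamma_{i,i-1}\, w_{i-1} + \gamma_{i,i}\, w_i + \gamma_{i,i+1}\, w_{i+1} \qquad (0 \leq i \leq d),
\end{align*}
under the convention $w_{-1}=w_{d+1}=0$. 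By the standard definition of the intersection numbers of a thin irreducible $T$-module (see \cite{tSub2,tSub3}), the diagonal coefficient in this three-term action is precisely $\gamma_{i,i} = a_i(W)$; this is what the formula in Lemma \ref{lem:TW} parametrizes.

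It now suffices to check the asserted equality on the basis $\lbrace w_j\rbrace_{j=0}^d$. If $j \neq i$ then $E^*_{r+i} w_j = 0$, so both $E^*_{r+i}AE^*_{r+i}\, w_j$ and $a_i(W) E^*_{r+i}\, w_j$ vanish. If $j = i$, then $E^*_{r+i} w_i = w_i$, and applying $A$ followed by $E^*_{r+i}$ annihilates the $w_{i-1}$ and $w_{i+1}$ components, giving
\begin{align*}
E^*_{r+i} A E^*_{r+i}\, w_i = E^*_{r+i}\bigl(\gamma_{i,i-1} w_{i-1} + a_i(W) w_i + \gamma_{i,i+1} w_{i+1}\bigr) = a_i(W)\, w_i = a_i(W) E^*_{r+i} w_i.
\end{align*}
Since the two operators agree on a basis of $W$, the identity $E^*_{r+i}AE^*_{r+i} = a_i(W) E^*_{r+i}$ holds on $W$.

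The argument is really just an unwinding of definitions, so there is no genuine obstacle; the only care needed is the identification of the middle coefficient $\gamma_{i,i}$ with the quantity $a_i(W)$ appearing in Lemma \ref{lem:TW}, which is the definition used in the references for the intersection numbers of a thin irreducible $T$-module.
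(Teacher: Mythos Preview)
Your proof is correct. The paper does not supply its own argument for this lemma; it simply cites \cite[Lemma~5.9]{cerzo}, where the intersection numbers $c_i(W), a_i(W), b_i(W)$ of a thin irreducible $T$-module are introduced precisely via the tridiagonal action of $A$ on a basis $\{w_i\}$ with $w_i \in E^*_{r+i}W$. Your unfolding of that definition is exactly the intended content, and your observation that the diagonal coefficient $\gamma_{i,i}$ is independent of the scaling of the $w_i$ (hence well-defined) is the only point that needs care. So your approach matches the cited reference.
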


\noindent Let $W$ denote an irreducible $T$-module, with endpoint $r$ and diameter $d$.  Our next goal is to compare the intersection number $a_i(W)$ with the intersection number $a_{r+i}$ of $\Gamma$ $(0 \leq i \leq d)$.

\begin{lemma} \label{lem:Comp1}  Let $W$ denote an irreducible $T$-module, with endpoint $r$ and diameter $d$.
Then for $0\leq i \leq d$,
\begin{align}  a_i(W) \leq a_{r+i}.
\label{eq:INEQ}
\end{align}
\end{lemma}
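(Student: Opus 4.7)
The plan is to reduce the inequality to an elementary statement about powers of $q$, then invoke the nonnegativity results for displacement proved earlier.

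First I would substitute the explicit formulas for $a_{r+i}$ and $a_i(W)$ from Lemmas \ref{lem:inter} and \ref{lem:TW}. After clearing the common denominator $q-1$ and pulling out the common factor $q^i$, the claim $a_i(W) \le a_{r+i}$ is equivalent to
\begin{align*}
(a_1+1)\, q^{D-d-t} \;\leq\; q^{t} + a_1\, q^{r},
\end{align*}
which in turn can be rewritten as
\begin{align*}
a_1 \bigl(q^{r} - q^{D-d-t}\bigr) + \bigl(q^{t} - q^{D-d-t}\bigr) \;\geq\; 0.
\end{align*}

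Next I would verify that each of the two parenthesized differences is nonnegative. Proposition \ref{lem:Wineq2} gives $2t - D + d \geq 0$, that is, $D-d-t \leq t$, so $q^{t} - q^{D-d-t} \geq 0$ since $q>1$. Proposition \ref{prop:mix} gives the displacement inequality $r + t - D + d \geq 0$, that is, $D-d-t \leq r$, so $q^{r} - q^{D-d-t} \geq 0$ as well. Combined with the obvious fact that $a_1 \geq 0$ (it is a valency-type intersection number of $\Gamma$), this establishes \eqref{eq:INEQ}.

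There is no real obstacle here; the work is all front-loaded in Section 6 (displacement nonnegativity) and Section 9 (the explicit intersection-number formulas). The only thing worth flagging is that the argument handles the bipartite case $a_1=0$ uniformly: the first summand then vanishes and the inequality comes down entirely to Proposition \ref{lem:Wineq2}, which is consistent with $a_{r+i}=0$ in that case. I would present the proof as a short two-line algebraic manipulation followed by the two applications of the displacement bounds, without needing to track the module structure further.
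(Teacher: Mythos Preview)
Your computation is correct, and there is no gap. After introducing the dual endpoint $t$ (which you use implicitly), the algebra reduces exactly as you say, and the two nonnegativity facts you need are supplied by Propositions \ref{lem:Wineq2} and \ref{prop:mix}; since those sit in Section~6 there is no circularity.

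That said, your route is genuinely different from the paper's. The paper does not touch the explicit intersection-number formulas at all: it observes via Lemma~\ref{lem:aiWM} that $a_i(W)$ is an eigenvalue of $E^*_{r+i}AE^*_{r+i}$, i.e.\ of the adjacency matrix of the subgraph induced on $\Gamma_{r+i}(x)$, and then invokes Lemma~\ref{lem:REG}(i) to bound any eigenvalue of an $a_{r+i}$-regular graph by $a_{r+i}$. This argument is one line and is not specific to dual polar graphs. Your argument, by contrast, is exactly the computation the paper performs in the \emph{next} lemma, Lemma~\ref{lem:AAdiff}, where the explicit difference $a_{r+i}-a_i(W)$ is written out precisely in order to characterise the equality case. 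So what you have done is collapse Lemmas~\ref{lem:Comp1} and~\ref{lem:AAdiff} into a single computational step; this is fine and arguably more self-contained, but it hides the conceptual reason for the inequality and ties the argument to the dual polar setting rather than to the general spectral fact about regular subgraphs.
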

\begin{proof} By Lemma  \ref{lem:aiWM} and the construction,  $a_i(W)$ is an eigenvalue of the subgraph induced on $\Gamma_{r+i}(x)$. This subgraph is regular with valency $a_{r+i}$. The result follows in view of Lemma 
\ref{lem:REG}. 
\end{proof}

\noindent  Shortly, we will consider the case of equality in  \eqref{eq:INEQ}. The following result will help to illuminate this case.

\begin{lemma} \label{lem:AAdiff} Let $W$ denote an irreducible $T$-module, with endpoint $r$, dual endpoint $t$, and diameter $d$.
Then for $0\leq i \leq d$,
\begin{align}
a_{r+i} -a_i(W) = q^{i+D-d-t} \frac{q^{2t-D+d}-1 +       a_1(q^{r+t-D+d}-1)}{q-1}.  \label{eq:Adiff}
\end{align}
Moreover
\begin{align}  
  q^{2t-D+d}-1\geq 0, \qquad \qquad   q^{r+t-D+d}-1\geq 0.
\label{eq:INEQ2}
\end{align}
\end{lemma}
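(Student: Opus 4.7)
My plan is to prove \eqref{eq:Adiff} by direct computation from the explicit formulas already on hand, and then deduce \eqref{eq:INEQ2} from inequalities already proved earlier in the paper.

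For the identity \eqref{eq:Adiff}, I would substitute the formula
$a_{r+i} = a_1 (q^{r+i}-1)/(q-1)$ from Lemma \ref{lem:inter} and the formula
$a_i(W) = \bigl((a_1+1)q^{D-d-t+i} - q^{t+i} - a_1\bigr)/(q-1)$ from Lemma \ref{lem:TW}, giving
\begin{align*}
a_{r+i}-a_i(W) = \frac{a_1 q^{r+i} - (a_1+1)q^{D-d-t+i} + q^{t+i}}{q-1},
\end{align*}
after the two ``$-a_1$'' terms cancel. Then I would factor out the common power $q^{i+D-d-t}$; the exponents $r+i,\; D-d-t+i,\; t+i$ shift to $r+t+d-D,\; 0,\; 2t+d-D$ respectively, producing
\begin{align*}
a_{r+i}-a_i(W) = q^{i+D-d-t}\cdot\frac{a_1(q^{r+t+d-D}-1) + (q^{2t+d-D}-1)}{q-1},
\end{align*}
which is \eqref{eq:Adiff}.

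For the inequalities \eqref{eq:INEQ2}, since $q\geq 2$ it is enough to show that the exponents $2t-D+d$ and $r+t-D+d$ are nonnegative. The first is exactly the content of Proposition \ref{lem:Wineq2}(i), and the second is \eqref{eq:disp} of Proposition \ref{prop:mix} (i.e.\ the fact that the displacement of $W$ is nonnegative).

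There is essentially no obstacle here: the first part is bookkeeping with the closed-form expressions for $a_{r+i}$ and $a_i(W)$, and the second part is a direct citation of results already established in Section 6. The only thing worth double-checking is the sign of the cancellation and the correct shift of exponents when pulling out $q^{i+D-d-t}$.
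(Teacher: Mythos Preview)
Your proposal is correct and follows exactly the same approach as the paper: for \eqref{eq:Adiff} the paper simply says to use Lemmas \ref{lem:inter} and \ref{lem:TW} (you have carried out that substitution and factoring correctly), and for \eqref{eq:INEQ2} the paper likewise cites Proposition \ref{lem:Wineq2}(i) and \eqref{eq:disp} together with $q>1$. The only cosmetic difference is that you invoke $q\geq 2$ rather than $q>1$, which is harmless here.
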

\begin{proof} To get \eqref{eq:Adiff}, use Lemmas \ref{lem:inter}, \ref{lem:TW}. To get \eqref{eq:INEQ2}, use Proposition \ref{lem:Wineq2}(i) and \eqref{eq:disp}
along with the fact that $q$ is a real number greater than $1$.
\end{proof}

\noindent Next we consider the case of equality in \eqref{eq:INEQ}, under the assumption that $\Gamma$ is nonbipartite.

\begin{proposition} \label{lem:4Views} Assume that $\Gamma$ is nonbipartite. Let $W$ denote an irreducible $T$-module, with endpoint $r$, dual endpoint $t$, and diameter $d$.
 Then the following are equivalent:
\begin{enumerate}
\item[\rm (i)] there exists an integer $i$ $(0 \leq i \leq d)$ such that $a_{r+i}=a_i(W)$;
\item[\rm (ii)] $a_{r+i} = a_i(W)$ for $0 \leq i \leq d$;
\item[\rm (iii)] equality holds everywhere in \eqref{eq:INEQ2};
\item[\rm (iv)] $W$ has displacement $0$.
\end{enumerate}
\end{proposition}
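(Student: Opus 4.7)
The plan is to prove the chain of equivalences by exploiting the explicit formula \eqref{eq:Adiff} from Lemma \ref{lem:AAdiff}. The crucial observation is that the right-hand side of \eqref{eq:Adiff} factors as $q^{i+D-d-t}/(q-1)$ times a quantity
\begin{align*}
N := (q^{2t-D+d}-1) + a_1(q^{r+t-D+d}-1)
\end{align*}
that does not involve $i$. Since $q>1$, the prefactor $q^{i+D-d-t}/(q-1)$ is strictly positive for every $i$. Therefore $a_{r+i}-a_i(W)=0$ for some $0\le i\le d$ iff $N=0$ iff $a_{r+i}-a_i(W)=0$ for all $0\le i\le d$. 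This instantly yields (i) $\Leftrightarrow$ (ii), with each equivalent to the condition $N=0$.

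Next I would pass from $N=0$ to (iii). By \eqref{eq:INEQ2}, each of $q^{2t-D+d}-1$ and $q^{r+t-D+d}-1$ is nonnegative, so $a_1(q^{r+t-D+d}-1)\ge 0$ as well. Because $\Gamma$ is nonbipartite, Lemma \ref{lem:inter} gives $a_1>0$. Hence $N=0$ forces both nonnegative summands to vanish, which is exactly (iii); conversely (iii) trivially gives $N=0$. This is the key step and essentially the only place the nonbipartite hypothesis is used.

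Finally, since $q>1$, condition (iii) is equivalent to the pair of equations $2t-D+d=0$ and $r+t-D+d=0$, i.e.\ $d=D-2t$ and $r=t$. By Proposition \ref{prop:mix}, this is precisely the statement that $W$ has displacement $0$, giving (iii) $\Leftrightarrow$ (iv). Combined with the previous paragraphs, this completes the equivalence (i) $\Leftrightarrow$ (ii) $\Leftrightarrow$ (iii) $\Leftrightarrow$ (iv).

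The only mild subtlety is the use of the nonbipartite hypothesis: without $a_1>0$ one could have $q^{2t-D+d}=1$ while $q^{r+t-D+d}$ remains strictly greater than $1$ (so that displacement is positive) and still get $N=0$. The rest of the argument is a direct reading of \eqref{eq:Adiff}, \eqref{eq:INEQ2}, and the characterization of displacement zero in Proposition \ref{prop:mix}.
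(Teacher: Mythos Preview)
Your proof is correct and follows essentially the same approach as the paper: use Lemma~\ref{lem:AAdiff} to see that $a_{r+i}-a_i(W)$ is a positive multiple of an $i$-independent quantity $N$, conclude (i)$\Leftrightarrow$(ii)$\Leftrightarrow$($N=0$), then use \eqref{eq:INEQ2} and $a_1>0$ to get $N=0\Leftrightarrow$(iii), and finally invoke Proposition~\ref{prop:mix} and Definition~\ref{def:DISP} for (iii)$\Leftrightarrow$(iv). Your write-up is simply a spelled-out version of the paper's two-line proof, and your remark on exactly where the nonbipartite hypothesis enters is accurate.
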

\begin{proof} ${\rm (i)} \Leftrightarrow {\rm (ii)} \Leftrightarrow {\rm (iii)}$ By Lemma \ref{lem:AAdiff}. \\
\noindent ${\rm (iii)} \Leftrightarrow {\rm (iv)}$ By  Proposition  \ref{prop:mix} and Definition \ref{def:DISP} and $a_1 \geq 1$.
\end{proof}

\section{The nucleus of a nonbipartite dual polar graph }
\noindent  Throughout this section, we assume that $\Gamma=(X,\mathcal R)$ is a  nonbipartite dual polar graph. We fix $x \in X$ and write $T=T(x)$.
We will give several descriptions of the nucleus $\mathcal N = \mathcal N(x)$. Recall the standard module $V$.

\begin{proposition}\label{prop:DPNUC}  For $0 \leq i \leq D$,
\begin{align*}
 E^*_i \mathcal N = \lbrace v \in E^*_iV \vert E^*_iA E^*_iv = a_i v\rbrace.
 \end{align*}
\end{proposition}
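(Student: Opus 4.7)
My plan is to prove both inclusions by decomposing $V$ orthogonally into irreducible $T$-modules and tracking the action of the operator $E^*_iAE^*_i\in T$ on each summand. The two key ingredients are Lemma \ref{lem:aiWM}, which gives that $E^*_iAE^*_i$ acts on $E^*_iW$ as the scalar $a_{i-r}(W)$ for any irreducible $T$-module $W$ of endpoint $r$, and Lemma \ref{lem:AAdiff}, which measures the defect $a_i-a_{i-r}(W)$ in terms of the endpoint, dual endpoint, and diameter of $W$.

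For the forward inclusion I would write $\mathcal N$ as an orthogonal direct sum of irreducible $T$-submodules $W$. By Proposition \ref{lem:Wdata} each such $W$ has dual endpoint equal to its endpoint $r$ and diameter $D-2r$, i.e.\ displacement $0$; Proposition \ref{lem:4Views} (the point at which nonbipartiteness first enters) then gives $a_{i-r}(W)=a_i$ whenever $E^*_iW\ne 0$. Applying $E^*_i$ to the decomposition of $\mathcal N$ and summing the contributions from Lemma \ref{lem:aiWM} on each summand yields $E^*_iAE^*_iv=a_iv$ for every $v\in E^*_i\mathcal N$.

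For the reverse inclusion I would split $V=\mathcal N\oplus\mathcal N^\perp$ using Lemma \ref{lem:Interp} and write a given $v\in E^*_iV$ satisfying $E^*_iAE^*_iv=a_iv$ as $v_1+v_2$ with $v_1\in E^*_i\mathcal N$ and $v_2\in E^*_i\mathcal N^\perp$. The forward inclusion handles $v_1$, so the hypothesis forces $E^*_iAE^*_iv_2=a_iv_2$, and it remains to show $v_2=0$. Decomposing $\mathcal N^\perp$ into irreducible $T$-submodules $W'$, each of displacement at least one (Lemmas \ref{lem:Interp}(ii) and \ref{lem:orth}), and writing $v_2=\sum_{W'}v_{2,W'}$ with $v_{2,W'}\in E^*_iW'$, the $T$-invariance of each $W'$ together with directness of the sum reduces the task to showing $(a_{i-r}(W')-a_i)v_{2,W'}=0$ forces $v_{2,W'}=0$ for every $W'$.

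The main obstacle, and the only place where nonbipartiteness is genuinely used, is the strictness of this inequality. By Lemma \ref{lem:AAdiff},
\[
a_i-a_{i-r}(W')=\frac{q^{i-r+D-d-t}}{q-1}\Bigl((q^{2t-D+d}-1)+a_1(q^{r+t-D+d}-1)\Bigr),
\]
where $r,t,d$ are the parameters of $W'$. The first summand inside the parentheses is nonnegative by Proposition \ref{lem:Wineq2}(i), while the second is strictly positive because $r+t-D+d\ge 1$ (positive displacement of $W'$) and $a_1\ge 1$ (nonbipartiteness). Hence $a_i-a_{i-r}(W')>0$, each $v_{2,W'}=0$, and the reverse inclusion follows. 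Without the nonbipartiteness hypothesis one would have $a_1=0$ and the argument would fail at precisely this step, since modules of positive displacement could then satisfy $a_{i-r}(W')=a_i$.
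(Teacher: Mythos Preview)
Your proof is correct and follows essentially the same approach as the paper: decompose $V$ into irreducible $T$-modules, use Lemma~\ref{lem:aiWM} to see that $E^*_iAE^*_i$ acts on each $E^*_iW$ as the scalar $a_{i-r}(W)$, and then compare $a_{i-r}(W)$ with $a_i$ via the displacement. The only cosmetic difference is that the paper handles both inclusions in one stroke by identifying each side as $\sum E^*_iW$ over the same family of summands, invoking Proposition~\ref{lem:4Views} once for the equivalence $(\text{displacement }0)\Leftrightarrow(a_{i-r}(W)=a_i)$; you instead prove the inclusions separately and, for the reverse inclusion, re-derive the strict inequality $a_i>a_{i-r}(W')$ for positive-displacement $W'$ directly from Lemma~\ref{lem:AAdiff}, which is exactly the content of the implication $\text{(i)}\Rightarrow\text{(iv)}$ in Proposition~\ref{lem:4Views}.
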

\begin{proof} 
By Lemma \ref{lem:ModODS}, $V$ is an orthogonal direct sum of irreducible $T$-modules:
\begin{align} 
 V = \sum_{W} W \qquad \qquad \hbox{\rm (orthogonal direct sum).} \label{eq:TW}
 \end{align}
 Definition \ref{def:NUC} and Lemma \ref{lem:Interp} yield
 \begin{align} 
 \mathcal N = \sum W \qquad \qquad \hbox{\rm (orthogonal direct sum),} \label{eq:TW2}
 \end{align}
 where the sum is over all the irreducible $T$-modules $W$ from  \eqref{eq:TW} that have displacement 0.
 Apply $E^*_i$ to each side of \eqref{eq:TW2}; this gives
 \begin{align}
E^*_i \mathcal N = \sum E^*_iW \qquad \qquad \hbox{\rm (orthogonal direct sum),} \label{eq:TW3}
 \end{align}
 where the sum is over all the irreducible $T$-modules $W$ from \eqref{eq:TW} that have displacement 0 and $i \in {\rm Supp}(W)$.
 Using Lemma \ref{lem:aiWM}  we obtain
 \begin{align}
 \lbrace v \in E^*_iV \vert E^*_iA E^*_iv = a_i v\rbrace = \sum E^*_iW \qquad  \hbox{\rm (orthogonal direct sum),} \label{eq:TW4}
 \end{align}
 where the sum is over all the irreducible $T$-modules $W$ from \eqref{eq:TW} such that $i \in {\rm Supp}(W)$ and the endpoint $r$ of $W$ satisfies $a_{i-r}(W)=a_i$.
 The sums in \eqref{eq:TW3}, \eqref{eq:TW4} are equal by Proposition \ref{lem:4Views}. The result follows.
\end{proof} 

\noindent To simplify our notation,  we will use the following convention concerning the vertex $x$.
For $0 \leq i \leq D$, by  a {\it connected component of $\Gamma_i(x)$} we mean a connected component of the subgraph induced on $\Gamma_i(x)$.

\begin{lemma} \label{lem:NPart} For $0 \leq i \leq D$ the subspace $E^*_i\mathcal N$ has an orthogonal basis consisting
of the characteristic vectors of the connected components of $\Gamma_i(x)$.
\end{lemma}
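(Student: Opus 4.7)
The plan is to read off the lemma as a direct consequence of Proposition \ref{prop:DPNUC} together with Lemma \ref{lem:REG}. The key observation is that the operator $E^*_i A E^*_i$, viewed as an endomorphism of $E^*_iV$, is nothing but the adjacency matrix of the subgraph $\Gamma_i$ induced on $\Gamma_i(x)$. Indeed, $E^*_iV$ has the orthonormal basis $\{\hat{y} : y \in \Gamma_i(x)\}$, and for $y,z \in \Gamma_i(x)$ the $(y,z)$-entry of $E^*_iAE^*_i$ equals $A_{y,z}$, which is $1$ or $0$ according to whether $y$ and $z$ are adjacent in $\Gamma$.

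Next I would invoke the fact that the subgraph $\Gamma_i$ is regular with valency $a_i$. This follows from the definition $a_i = p^i_{1,i}$: for any $y \in \Gamma_i(x)$, the number of neighbors of $y$ inside $\Gamma_i(x)$ is exactly $p^i_{1,i} = a_i$, independent of $y$. (Here we use that $\partial(x,y)=i$, so the neighbors of $y$ lying in $\Gamma_i(x)$ are precisely the vertices at distance $1$ from $y$ and distance $i$ from $x$.)

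Now by Proposition \ref{prop:DPNUC}, the subspace $E^*_i\mathcal{N}$ coincides with the $a_i$-eigenspace of $E^*_iAE^*_i$ on $E^*_iV$, i.e., with the $a_i$-eigenspace of the adjacency matrix of $\Gamma_i$. By Lemma \ref{lem:REG}(ii), this eigenspace has an orthogonal basis consisting of the characteristic vectors of the connected components of $\Gamma_i$, which are by definition the connected components of $\Gamma_i(x)$. This proves the claim.

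There is essentially no obstacle; the only thing to verify carefully is the identification of $E^*_iAE^*_i$ with the adjacency matrix of the induced subgraph and the regularity of that subgraph, both of which are immediate from definitions.
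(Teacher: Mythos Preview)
Your proof is correct and follows exactly the same route as the paper's: the paper's proof simply invokes the regularity of the induced subgraph on $\Gamma_i(x)$ with valency $a_i$, then cites Lemma~\ref{lem:REG} and Proposition~\ref{prop:DPNUC}. You have supplied the details of the identification of $E^*_iAE^*_i$ with the adjacency matrix of the induced subgraph, which the paper leaves implicit, but the argument is the same.
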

\begin{proof}  The subgraph induced on $\Gamma_i(x)$ is regular with valency $a_i$. The result follows in view of Lemma \ref{lem:REG} and Proposition \ref{prop:DPNUC}. 
\end{proof}

\begin{definition}\label{def:sim}  \rm Using the vertex $x$, we define a binary relation $\sim$ on $X$ as follows. For $y, z \in X$ we declare $y \sim z$
whenever both
\begin{enumerate}
\item[\rm (i)] $\partial(x,y) = \partial(x,z)$;
\item[\rm (ii)]  $y,z$ are in the same connected component of $\Gamma_i(x)$, where $i=\partial(x,y)=\partial(x,z)$.
\end{enumerate}
Note that $\sim$ is an equivalence relation.
\end{definition}
\noindent  The next result is meant to clarify Definition \ref{def:sim}.

\begin{lemma} \label{lem:simMeaning} For $0 \leq i \leq D$ the set $\Gamma_i(x)$ is a disjoint union of $\sim$ equivalence classes.
These equivalence classes are the connected components of  $\Gamma_i(x)$.
\end{lemma}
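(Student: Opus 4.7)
The plan is to observe that the statement is essentially an unpacking of Definition \ref{def:sim}, so the proof is a short verification.

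First I would note that by Definition \ref{def:sim}(i), if $y \in \Gamma_i(x)$ and $y \sim z$, then $\partial(x,z) = \partial(x,y) = i$, so $z \in \Gamma_i(x)$ as well. Hence every $\sim$ equivalence class is contained in some single subconstituent $\Gamma_i(x)$. Since $X$ is the disjoint union $\bigsqcup_{i=0}^{D} \Gamma_i(x)$, it follows that each $\Gamma_i(x)$ is a disjoint union of $\sim$ equivalence classes.

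Next, I would fix $0 \leq i \leq D$ and restrict attention to $y,z \in \Gamma_i(x)$. For such vertices, condition (i) of Definition \ref{def:sim} holds automatically, so $y \sim z$ reduces to condition (ii): that $y$ and $z$ lie in the same connected component of the subgraph induced on $\Gamma_i(x)$. Being in the same connected component of a graph is manifestly an equivalence relation on the vertex set of that graph, and its equivalence classes are precisely the connected components. Therefore the $\sim$ equivalence classes contained in $\Gamma_i(x)$ are exactly the connected components of $\Gamma_i(x)$, which yields both assertions of the lemma.

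There is no real obstacle here; the only thing to verify is that Definition \ref{def:sim} has indeed been set up so that condition (i) forces each class into a single $\Gamma_i(x)$, after which condition (ii) does the rest.
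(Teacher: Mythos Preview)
Your proof is correct and takes essentially the same approach as the paper: both recognize that the lemma is an immediate unpacking of Definition~\ref{def:sim}. The paper's proof consists of the single phrase ``By Definition~\ref{def:sim}'', and your argument simply spells out explicitly why condition~(i) confines each class to a single $\Gamma_i(x)$ and why condition~(ii) then identifies the classes within $\Gamma_i(x)$ with its connected components.
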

\begin{proof} By Definition \ref{def:sim}.
\end{proof}


\begin{theorem} \label{thm:DPN} The nucleus $\mathcal N$ has an orthogonal basis consisting of the
characteristic vectors of the $\sim$ equivalence classes.
\end{theorem}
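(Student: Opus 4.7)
The plan is to assemble the theorem by decomposing $\mathcal N$ via its dual primitive idempotent decomposition, applying the connected component description of each summand, and finally recognizing the union of those bases as exactly the characteristic vectors of the $\sim$ equivalence classes. Since the hard work has already been done in Proposition \ref{prop:DPNUC} and Lemmas \ref{lem:TwoSum}, \ref{lem:NPart}, \ref{lem:simMeaning}, the argument is largely organizational.

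First, I would invoke Lemma \ref{lem:TwoSum} to write
\begin{align*}
\mathcal N = \sum_{i=0}^{D} E^*_i \mathcal N \qquad \text{(orthogonal direct sum).}
\end{align*}
This reduces the construction of an orthogonal basis for $\mathcal N$ to the problem of finding orthogonal bases for each summand $E^*_i \mathcal N$ and then taking their union; orthogonality between summands is automatic, and since each summand is contained in $E^*_i V = \mathrm{Span}\{\hat y \mid y \in \Gamma_i(x)\}$, the bases for different $i$ are supported on disjoint subsets of $X$, reinforcing their orthogonality.

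Next, for each fixed $i$ with $0 \leq i \leq D$, I would apply Lemma \ref{lem:NPart} to obtain an orthogonal basis of $E^*_i \mathcal N$ consisting of the characteristic vectors of the connected components of $\Gamma_i(x)$. By Lemma \ref{lem:simMeaning}, these connected components are precisely the $\sim$ equivalence classes that lie inside $\Gamma_i(x)$.

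Finally, I would combine the bases across $i$: since every vertex $y \in X$ satisfies $y \in \Gamma_{\partial(x,y)}(x)$, each $\sim$ equivalence class is contained in exactly one $\Gamma_i(x)$, and thus taking the union over $i$ of the bases provided by Lemma \ref{lem:NPart} yields the characteristic vectors of all $\sim$ equivalence classes on $X$, with no overlap or omission. The resulting family is orthogonal (pairs within a fixed $i$ by Lemma \ref{lem:NPart}, pairs across distinct $i$ by disjoint support) and spans $\mathcal N$ by the first display. I don't anticipate any real obstacle; the only mild point to verify is that no $\sim$ equivalence class straddles two different distance shells from $x$, which is immediate from Definition \ref{def:sim}(i).
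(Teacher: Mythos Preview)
Your proposal is correct and follows essentially the same approach as the paper: the paper's proof invokes Lemma \ref{lem:TwoSum} for the orthogonal direct sum $\mathcal N=\sum_{i=0}^D E^*_i \mathcal N$ and then cites Lemmas \ref{lem:NPart} and \ref{lem:simMeaning} to conclude. Your write-up is simply a more detailed unpacking of the same three ingredients.
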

\begin{proof} 
 By Lemma \ref{lem:TwoSum},
the sum $\mathcal N=\sum_{i=0}^D E^*_i \mathcal N$ is orthogonal and direct. The result follows in view of
Lemmas \ref{lem:NPart},    \ref{lem:simMeaning}.
\end{proof}

\begin{corollary} \label{cor:Ndim} The following are the same:
\begin{enumerate}
\item[\rm (i)] the dimension of $\mathcal N$;
\item[\rm (ii)]  the number of $\sim$ equivalence classes.
\end{enumerate}
\end{corollary}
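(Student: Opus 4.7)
The plan is to read this off directly from Theorem \ref{thm:DPN}, which already does the substantive work of identifying an explicit basis for $\mathcal{N}$ indexed by the $\sim$ equivalence classes. Once that basis is in hand, the corollary is just the elementary linear-algebra statement that the dimension of a finite-dimensional vector space equals the cardinality of any basis.

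More precisely, I would proceed as follows. Let $\mathcal{C}$ denote the set of $\sim$ equivalence classes on $X$. By Theorem \ref{thm:DPN}, the family of characteristic vectors $\lbrace \sum_{y \in C} \hat y \mid C \in \mathcal{C} \rbrace$ is an orthogonal basis for $\mathcal{N}$. To turn this into an equality of cardinalities, I need to note that the assignment $C \mapsto \sum_{y \in C} \hat y$ is injective: distinct equivalence classes are disjoint nonempty subsets of $X$, and since $\lbrace \hat y \mid y \in X\rbrace$ is an orthonormal basis of $V$, distinct nonempty subsets yield distinct (and nonzero) characteristic vectors. Hence the basis has cardinality $|\mathcal{C}|$, and so
\[
{\rm dim}\,\mathcal{N} = |\mathcal{C}|,
\]
which is the claimed equality.

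There is no real obstacle here; the content of the corollary has already been absorbed into Theorem \ref{thm:DPN}. The only thing to be careful about is not to conflate ``basis indexed by $\mathcal{C}$'' with ``basis of cardinality $|\mathcal{C}|$'' without the short injectivity remark above. Since the proof amounts to one or two sentences, I would simply state it as ``Immediate from Theorem \ref{thm:DPN}, since distinct $\sim$ equivalence classes yield distinct characteristic vectors.''
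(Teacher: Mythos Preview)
Your proposal is correct and matches the paper's own proof, which simply reads ``By Theorem \ref{thm:DPN}.'' Your added injectivity remark is a harmless elaboration of something the paper leaves implicit.
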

\begin{proof} By Theorem \ref{thm:DPN}.
\end{proof}

\begin{corollary} \label{cor:ENdim} For $0 \leq i \leq D$ the following are the same:
\begin{enumerate}
\item[\rm (i)] the dimension of $ E^*_i \mathcal N$;
\item[\rm (ii)]  the number of $\sim$ equivalence classes that are contained in $\Gamma_i(x)$;
\item[\rm (iii)]  the number of connected components of $\Gamma_i(x)$.
\end{enumerate}
\end{corollary}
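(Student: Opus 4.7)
The plan is to observe that the corollary is an immediate bookkeeping consequence of the two lemmas that feed Theorem \ref{thm:DPN}, and to string them together in the cleanest way.

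First I would establish (i)$=$(iii) directly from Lemma \ref{lem:NPart}: that lemma exhibits an orthogonal (hence linearly independent) basis for $E^*_i\mathcal N$ indexed by the connected components of $\Gamma_i(x)$, so $\dim E^*_i\mathcal N$ equals the number of such components. Next I would establish (ii)$=$(iii) from Lemma \ref{lem:simMeaning}: by construction of $\sim$, every equivalence class lies entirely in some $\Gamma_j(x)$, and the classes that lie in $\Gamma_i(x)$ are exactly the connected components of the subgraph induced on $\Gamma_i(x)$. This gives the chain (i)$=$(iii)$=$(ii).

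Alternatively (and this is perhaps the more conceptual route, which I would mention), one can read the corollary straight off Theorem \ref{thm:DPN}: that theorem gives an orthogonal basis of $\mathcal N$ consisting of characteristic vectors $\chi_C$ of the $\sim$ equivalence classes $C$. Since each $C$ is contained in a single subconstituent $\Gamma_{i(C)}(x)$, we have $E^*_i\chi_C=\chi_C$ if $C\subseteq\Gamma_i(x)$ and $E^*_i\chi_C=0$ otherwise. Applying the orthogonal projection $E^*_i$ to this basis of $\mathcal N$ therefore yields an orthogonal basis of $E^*_i\mathcal N$ indexed by the equivalence classes contained in $\Gamma_i(x)$, which gives (i)$=$(ii), and then Lemma \ref{lem:simMeaning} supplies (ii)$=$(iii).

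There is no real obstacle here; the only thing to be careful about is to cite the orthogonality in Lemma \ref{lem:NPart} (or Theorem \ref{thm:DPN}) when asserting linear independence, so that ``basis'' really is a basis and the dimension count is valid.
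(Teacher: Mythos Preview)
Your proposal is correct and matches the paper's approach: the paper's proof simply cites Lemma~\ref{lem:simMeaning} and Theorem~\ref{thm:DPN}, which is exactly your second (``more conceptual'') route. Your first route via Lemma~\ref{lem:NPart} is an equally valid shortcut, since Theorem~\ref{thm:DPN} is itself assembled from Lemma~\ref{lem:NPart} and Lemma~\ref{lem:simMeaning}.
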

\begin{proof} By Lemma \ref{lem:simMeaning} and Theorem \ref{thm:DPN}.
\end{proof}

\noindent Recall the multiplicity numbers ${\rm mult}_r$ from Definition  \ref{def:di}.

\begin{corollary} \label{cor:multN} For $0 \leq i \leq D/2$ the following are the same:
\begin{enumerate}
\item[\rm (i)] $\sum_{r=0}^i {\rm mult}_r$;
\item[\rm (ii)]  the number of connected components of  $\Gamma_i(x)$;
\item[\rm (iii)]  the number of connected components of $\Gamma_{D-i}(x)$.
\end{enumerate}
\end{corollary}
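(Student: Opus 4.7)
The plan is to obtain the corollary as an immediate consequence of two results already established in the excerpt, namely Proposition~\ref{prop:Six} and Corollary~\ref{cor:ENdim}. Fix $i$ with $0 \leq i \leq D/2$. By Proposition~\ref{prop:Six}, applied at the index $i$, both of the subspaces $E^*_i \mathcal N$ and $E^*_{D-i}\mathcal N$ have dimension equal to $\sum_{r=0}^i {\rm mult}_r$. This identifies quantity (i) with the common dimension of these two subspaces.

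Next, I would invoke Corollary~\ref{cor:ENdim} twice: once at the index $i$ and once at the index $D-i$ (both of which lie in the allowed range $\{0,1,\ldots,D\}$). The first application equates $\dim E^*_i\mathcal N$ with the number of connected components of $\Gamma_i(x)$, which is quantity (ii). The second application equates $\dim E^*_{D-i}\mathcal N$ with the number of connected components of $\Gamma_{D-i}(x)$, which is quantity (iii). Chaining these equalities with the previous paragraph gives the desired identification of (i), (ii), (iii).

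There is no genuine obstacle here; the work was done in proving Proposition~\ref{prop:Six} (which is where the nontrivial fact $\dim E^*_i\mathcal N = \dim E^*_{D-i}\mathcal N$ comes from, ultimately reflecting the palindromic shape property of thin irreducible $T$-submodules of the nucleus given in Proposition~\ref{lem:Wdata}) and in Corollary~\ref{cor:ENdim} (where the dual-polar, nonbipartite hypothesis is used via Proposition~\ref{prop:DPNUC} and Lemma~\ref{lem:NPart}). The corollary itself is simply an assembly of these two results.
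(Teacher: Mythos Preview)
Your proposal is correct and takes essentially the same approach as the paper: the paper's proof simply reads ``By Proposition~\ref{prop:Six} and Corollary~\ref{cor:ENdim},'' and you have accurately unpacked how these two results combine.
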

\begin{proof} By  Proposition \ref{prop:Six}  and  Corollary \ref{cor:ENdim}.
\end{proof}

\begin{corollary} \label{cor:LastSub} The subgraph induced on $\Gamma_D(x)$ is connected.
\end{corollary}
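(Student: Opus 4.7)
The plan is to deduce this immediately from Corollary \ref{cor:multN} applied with $i = 0$. Setting $i=0$ in that corollary gives that the three quantities
\begin{align*}
\sum_{r=0}^{0} {\rm mult}_r = {\rm mult}_0, \qquad \#\{\text{components of } \Gamma_0(x)\}, \qquad \#\{\text{components of }\Gamma_D(x)\}
\end{align*}
coincide. The first of these equals $1$ by the remark following Definition \ref{def:di}, which records ${\rm mult}_0 = 1$ as a consequence of Example \ref{ex:primary} (there is a unique primary $T$-module, and it has endpoint $0$). The second is trivially $1$ since $\Gamma_0(x) = \{x\}$. Hence the third equals $1$, i.e. $\Gamma_D(x)$ induces a connected subgraph.

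There is essentially no obstacle here, since all the work has been done upstream. The only thing worth double-checking is that Corollary \ref{cor:multN} is indeed applicable at $i=0$; but the statement is for $0 \leq i \leq D/2$, and $D \geq 1$, so $i = 0$ is in range. A one-line proof therefore suffices.

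\begin{proof}
Apply Corollary \ref{cor:multN} with $i=0$. We have ${\rm mult}_0 = 1$ by Example \ref{ex:primary} and the remark after Definition \ref{def:di}. Therefore the number of connected components of $\Gamma_D(x)$ equals $1$.
\end{proof}
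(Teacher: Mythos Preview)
Your proof is correct and takes essentially the same approach as the paper: apply Corollary~\ref{cor:multN} with $i=0$ and use ${\rm mult}_0=1$. The paper's proof is the one-liner ``By Corollary~\ref{cor:multN} and ${\rm mult}_0=1$,'' which is exactly what you do.
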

\begin{proof} By Corollary \ref{cor:multN} and ${\rm mult}_0=1$.
\end{proof}

\noindent In the next section, we will give formulas for ${\rm dim}\, E^*_i\mathcal N$ $(0 \leq i \leq D)$
and 
${\rm mult}_r$ $(0 \leq r \leq D/2)$. We will also describe the partition of $X$ into the $\sim$ equivalence classes.
We will do these things using the concept of a weak-geodetically closed subgraph of $\Gamma$.

\section{The nucleus of a nonbipartite dual polar graph, continued}
\noindent We continue to discuss the nonbipartite dual polar graph $\Gamma=(X,\mathcal R)$. In this section,
we investigate a type of subgraph of $\Gamma$, said to be weak-geodetically closed \cite{weng97, weng98}. We use these subgraphs to
describe the nucleus of $\Gamma$ with respect to any given vertex.

\begin{definition} \label{def:WGC} \rm (See \cite[Section~1]{weng97}.) A subset $\Omega$ of $X$ is called {\it weak-geodetically closed} whenever $\Omega$ is nonempty and
has the following property: for all $y,z\in \Omega$ and $\xi \in X$,
\begin{align*}
\partial(y,\xi) + \partial(\xi,z) \leq \partial(y,z)+1 \qquad \hbox{implies}\qquad \xi \in \Omega.
\end{align*}
\end{definition}

\begin{remark}\rm In some articles,  the condition in Definition \ref{def:WGC} is called {\it strongly closed}, see for example \cite{hiraki, hiraki2}.
\end{remark}

\begin{definition}\label{def:WGCgraph} \rm By a {\it weak-geodetically closed subgraph} of $\Gamma$, we mean the subgraph induced on a weak-geodetically closed subset of $X$.
For notational convenience, this subset will serve as the name for the subgraph.
\end{definition}

\begin{remark}\rm Let $\Omega$ denote a weak-geodetically closed subgraph of $\Gamma$. Then  $\Omega$ is connected.
For $y, z \in \Omega$ the distance $\partial(y,z)$ (in $\Omega$) is equal to the distance $\partial(y,z)$ (in $\Gamma$).
\end{remark}

\begin{lemma} \label{lem:WGC} {\rm (See  \cite[Section~1]{hiraki2},    \cite[Corollary~5.3]{weng98}.)}  Let $\Omega$ denote a weak-geodetically closed subgraph of $\Gamma$, with diameter denoted by $d$.
Then $\Omega$   is distance-regular with valency $c_d+a_d$ and intersection numbers
\begin{align*}
c_i(\Omega) = c_i, \qquad a_i(\Omega) = a_i, \qquad \quad (0 \leq i \leq d).
\end{align*}
\noindent Moreover, $\Omega$ is a dual polar graph. 
\end{lemma}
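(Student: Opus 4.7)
My plan is to derive the lemma from a structural description: every weak-geodetically closed subgraph $\Omega$ of diameter $d$ in $\Gamma$ has the form $\Omega_F := \{w \in X : F \subseteq w\}$ for some isotropic subspace $F \subseteq {\bf U}$ with $\dim F = D - d$. Once this is in hand, the lemma reduces to a statement about the dual polar graph on the quotient form space $F^\perp/F$.

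First, I pick $y,z \in \Omega$ at distance $d$, set $F := y \cap z$ (so $\dim F = D - d$ and $F$ is isotropic), and show $\Omega = \Omega_F$. Using the dual-polar distance formula $\partial(a,b) = D - \dim(a \cap b)$, the weak-geodetic inequality $\partial(y,\xi) + \partial(\xi,z) \leq \partial(y,z) + 1$ rewrites as
\[
\dim(y \cap \xi) + \dim(\xi \cap z) \geq D - 1 + \dim(y \cap z).
\]
Combining this with $(y \cap \xi) + (\xi \cap z) \subseteq \xi$, whose dimension is at most $D$, one obtains $\dim(F \cap \xi) \geq \dim F - 1$, i.e.\ $\xi$ contains a hyperplane of $F$. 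This is too weak in a single step, so I would iterate: by varying the pair $y',z' \in \Omega$ at distance $d$ (obtained from $y,z$ by replacing one endpoint by a neighbor inside $\Omega$) and applying weak-geodetic closure of $\Omega$ to each such pair, one forces $\xi$ to contain all of $F$, not merely a hyperplane. This gives $\Omega \subseteq \Omega_F$. For the converse, I would produce, for any $w \in \Omega_F$, a path in $\Omega_F$ from $y$ to $w$ whose intermediate vertices satisfy the weak-geodetic condition relative to $\Omega$-vertex endpoints, so that Definition~\ref{def:WGC} forces each intermediate into $\Omega$.

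Once $\Omega = \Omega_F$, the lemma follows readily: the quotient ${\bf U}' := F^\perp/F$ carries a nondegenerate form of the same type, over the same field, and with the same parameter $e$ as ${\bf U}$, and $w \mapsto w/F$ is a graph isomorphism from $\Omega_F$ onto the dual polar graph on ${\bf U}'$ of diameter $d$. In particular, $\Omega$ is itself a dual polar graph. Applying Lemma~\ref{lem:inter} to this smaller graph (with the same $q$ and $a_1 = q^{e+1}-1$) yields the stated formulas $c_i(\Omega) = c_i$ and $a_i(\Omega) = a_i$ for $0 \leq i \leq d$, and the valency $c_d + a_d$ comes from $b_0(\Omega) = c_d(\Omega) + a_d(\Omega) + b_d(\Omega)$ together with $b_d(\Omega) = 0$. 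The main obstacle is the iteration in the previous paragraph: a single application of weak-geodetic closure pins down only a hyperplane of $F$ inside $\xi$, and promoting this to $F \subseteq \xi$ requires a careful walk through $\Omega$ invoking Definition~\ref{def:WGC} at each step.
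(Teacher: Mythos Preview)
The paper does not prove Lemma~\ref{lem:WGC}; it simply imports the result from the cited references of Hiraki and Weng. So there is no in-paper argument to compare against, and your task was really to supply a proof where the paper defers to the literature.

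Your overall strategy---identify $\Omega$ with $\Omega_F = \{w \in X : F \subseteq w\}$ for $F = y \cap z$, then read everything off from the quotient form space $F^\perp/F$---is the natural structural route, and the second half (once $\Omega = \Omega_F$ is known, the quotient carries a form of the same type and $\Omega_F$ is isomorphic to the dual polar graph on $F^\perp/F$) is correct and cleanly explained. The trouble is entirely in the first half, and it is more serious than the ``obstacle'' you flag.

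Your dimension computation is fine, but it is pointed the wrong way. Definition~\ref{def:WGC} is a \emph{closure} condition: it says that if $\xi$ satisfies the inequality relative to two vertices of $\Omega$, then $\xi \in \Omega$. This is exactly what you need to push vertices \emph{into} $\Omega$ (i.e.\ for $\Omega_F \subseteq \Omega$), but it gives you nothing directly about an arbitrary $\xi \in \Omega$, which is what $\Omega \subseteq \Omega_F$ requires. Your proposed ``iteration by varying $y',z'$'' does not fix this: to apply your inequality to a given $\xi \in \Omega$ you would first need $\partial(y',\xi)+\partial(\xi,z') \le d+1$, and for a generic $\xi$ at distance up to $d$ from both $y'$ and $z'$ there is no reason this holds. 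The approach in the literature (Weng) avoids this by proving distance-regularity of $\Omega$ \emph{first}, directly from the weak-geodetically closed hypothesis and the intersection numbers of $\Gamma$ (the key point being that for $u,v \in \Omega$ at distance $i$, every vertex counted by $c_i$ or $a_i$ in $\Gamma$ already satisfies the closure inequality and hence lies in $\Omega$); the identification with $\Omega_F$ then comes afterward, essentially by a size count or by the uniqueness statement you later see as Lemma~\ref{lem:WGCfind}. If you want to salvage your order of argument, one clean route is: (i) show $\Omega_F$ is weak-geodetically closed and $\Omega_F \subseteq \Omega$ by closure; (ii) show directly that every vertex of $\Omega$ has exactly $c_d+a_d$ neighbours in $\Omega$ (this uses only Definition~\ref{def:WGC} applied with one endpoint adjacent to the other); (iii) conclude $\Omega = \Omega_F$ because a connected $(c_d+a_d)$-regular graph cannot properly contain the connected $(c_d+a_d)$-regular graph $\Omega_F$.
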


\begin{lemma} \label{lem:WGCfind} {\rm (See \cite[Corollary 2.2]{weng97}, \cite[Theorem~7.2]{weng98}.)} 
Pick  $y,z \in X$ and write $i=\partial(y,z)$. Then $y,z$ are contained in a unique weak-geodetically closed subgraph of $\Gamma$ that has diameter $i$.
\end{lemma}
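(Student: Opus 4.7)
The plan is to identify the desired weak-geodetically closed subgraph with the ``residue''
\[
\Omega := \{ w \in X : y \cap z \subseteq w \}
\]
in the ambient polar space, and to extract uniqueness from an intersection argument combined with Lemma~\ref{lem:WGC}.

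\textbf{Existence.} Set $U := y \cap z$, an isotropic subspace of $\mathbf{U}$ of dimension $D - i$. Clearly $y, z \in \Omega$, and for any $w_1, w_2 \in \Omega$ the inclusion $U \subseteq w_1 \cap w_2$ yields $\partial(w_1, w_2) = D - \dim(w_1 \cap w_2) \leq i$, with equality at $(y,z)$, so the diameter of $\Omega$ is exactly $i$. To verify weak-geodetic closure I would prove the following ``dimension closure'' key lemma: for any $w_1, w_2, \xi \in X$ satisfying
$\partial(w_1, \xi) + \partial(\xi, w_2) \leq \partial(w_1, w_2) + 1$,
one has $w_1 \cap w_2 \subseteq \xi$. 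Granted this, weak-geodetic closure of $\Omega$ is immediate: $U \subseteq w_1 \cap w_2 \subseteq \xi$. Writing $a = \partial(w_1, w_2)$, $b = \partial(w_1, \xi)$, $c = \partial(\xi, w_2)$, the dimension formula inside $\xi$ gives
\[
\dim(w_1 \cap w_2 \cap \xi) \geq \dim(w_1 \cap \xi) + \dim(w_2 \cap \xi) - D = D - (b + c).
\]
In the geodesic case $b + c = a$, this forces $w_1 \cap w_2 \subseteq \xi$ directly. In the non-geodesic case $b + c = a + 1$, the count yields only a codimension-one inclusion, and one must use the polar-space structure---specifically, that $M := w_2 \cap \xi$ lies in $w_2 \subseteq U^\perp$, so $U \subseteq M^\perp$---together with a further dimension argument on $M^\perp \cap w_1$ to upgrade to the full inclusion.

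\textbf{Uniqueness.} Let $\Omega'$ be another weak-geodetically closed subgraph of $\Gamma$ of diameter $i$ containing $y$ and $z$. The defining implication of weak-geodetic closure is trivially preserved by intersection, so $\Omega \cap \Omega'$ is itself weak-geodetically closed; it contains $y$ and $z$, so its diameter is at least $i$, and it is contained in $\Omega$, so its diameter is at most $i$. Applying Lemma~\ref{lem:WGC} to each of $\Omega$, $\Omega'$, $\Omega \cap \Omega'$, all three are distance-regular dual polar graphs with identical intersection numbers $c_j, a_j$ for $0 \leq j \leq i$; in particular all three have the same cardinality. Combining $\Omega \cap \Omega' \subseteq \Omega$ with equal cardinality forces $\Omega \cap \Omega' = \Omega$, hence $\Omega \subseteq \Omega'$; reversing the roles yields the other inclusion, and hence $\Omega = \Omega'$.

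The main obstacle is the key lemma in the case $b + c = a + 1$: a bare dimension count only provides a codimension-one conclusion, and one must genuinely exploit the orthogonality of the polar form on $\mathbf{U}$---in particular that elements of $\Omega$ all lie in $U^\perp$---to rule out the codimension-one case and force $w_1 \cap w_2 \subseteq \xi$. Uniqueness, by contrast, is a purely combinatorial consequence of Lemma~\ref{lem:WGC} once existence has been established.
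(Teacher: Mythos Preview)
The paper does not prove this lemma; it cites Weng \cite{weng97,weng98} as a black box. Your explicit candidate $\Omega=\{w\in X:y\cap z\subseteq w\}$ is exactly what the paper later calls $\eta^\vee$ (Definition~\ref{def:etaV}, Lemma~\ref{lem:etaV}), and there too the paper defers the weak-geodetic closure of $\eta^\vee$ to the literature via Remark~\ref{rem:U}. So your route is the natural one and more self-contained than anything the paper offers. Your uniqueness argument via Lemma~\ref{lem:WGC} is correct and clean.

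For existence, the acknowledged gap at $b+c=a+1$ in your key lemma can be closed, and your orthogonality hint is exactly right---though your sketch refers to $U$ where it should refer to $w_1\cap w_2$ (the key lemma is stated for arbitrary $w_1,w_2,\xi$). Here is the missing step. Set $P=w_1\cap w_2$, and suppose for contradiction that $P\not\subseteq\xi$, so $\dim(P\cap\xi)=D-a-1$. Then the inclusion $(w_1\cap\xi)+(w_2\cap\xi)\subseteq\xi$ is an equality by your dimension count. Pick $p\in P\setminus\xi$. Since $p\in w_1$ and $w_1$ is totally isotropic, $p\perp w_1\supseteq w_1\cap\xi$; likewise $p\perp w_2\cap\xi$; hence $p\perp(w_1\cap\xi)+(w_2\cap\xi)=\xi$. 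As $p$ is itself isotropic (it lies in the totally isotropic $w_1$), the subspace $\xi+\langle p\rangle$ is totally isotropic and strictly contains $\xi$, contradicting the maximality of $\xi$. Therefore $P\subseteq\xi$, and with it the weak-geodetic closure of $\Omega$.
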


\noindent For the rest of this section,
we fix $x \in X$ and write $T=T(x)$. We consider the nucleus $\mathcal N = \mathcal N(x)$. 

\begin{lemma} \label{lem:NS2} For $0 \leq i \leq D$ the following sets {\rm (i)}, {\rm (ii)} are in bijection:
\begin{enumerate}
\item[\rm (i)] the set of weak-geodetically closed subgraphs  of $\Gamma$ that contain $x$ and have diameter $i$;
\item[\rm (ii)] the set of connected components of  $\Gamma_i(x)$.
\end{enumerate} 
A bijection from {\rm (i)} to {\rm (ii)} sends $\Omega \mapsto \Omega \cap \Gamma_i(x)$.
\end{lemma}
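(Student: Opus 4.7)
The plan is to establish the map in both directions, using Corollary \ref{cor:LastSub} (applied internally to weak-geodetically closed subgraphs), the weak-geodetic closure condition, and the uniqueness statement in Lemma \ref{lem:WGCfind}.

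First I would show that the map $\Omega \mapsto \Omega \cap \Gamma_i(x)$ is well-defined, i.e.\ that the image is a connected component of $\Gamma_i(x)$. Fix a weak-geodetically closed subgraph $\Omega$ containing $x$ with diameter $i$. By Lemma \ref{lem:WGC}, $\Omega$ is itself a dual polar graph with the same intersection numbers $c_j, a_j$ as $\Gamma$ for $0 \leq j \leq i$; in particular $a_1(\Omega) = a_1 \ne 0$, so $\Omega$ is nonbipartite, and Corollary \ref{cor:LastSub} (applied to $\Omega$ with basepoint $x$) shows that the vertices of $\Omega$ at distance $i$ from $x$, which is exactly $\Omega \cap \Gamma_i(x)$, induce a connected subgraph. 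To see that this set is a full connected component of $\Gamma_i(x)$, I would check the closure condition: suppose $y \in \Omega \cap \Gamma_i(x)$ and $z \in \Gamma_i(x)$ is $\Gamma$-adjacent to $y$. Taking the pair $x, y \in \Omega$ and $\xi = z$,
\[
\partial(x,z) + \partial(z,y) = i + 1 \leq \partial(x,y) + 1,
\]
so Definition \ref{def:WGC} forces $z \in \Omega$. Thus $\Omega \cap \Gamma_i(x)$ is closed under $\Gamma_i(x)$-adjacency and connected, hence a connected component.

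Next I would prove surjectivity. Given a connected component $C$ of $\Gamma_i(x)$, pick any $y \in C$; then $\partial(x,y) = i$, and by Lemma \ref{lem:WGCfind} there is a (unique) weak-geodetically closed subgraph $\Omega$ of diameter $i$ containing $x$ and $y$. By the previous paragraph, $\Omega \cap \Gamma_i(x)$ is a connected component of $\Gamma_i(x)$ containing $y$, so it equals $C$.

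For injectivity, suppose $\Omega_1, \Omega_2$ are two such subgraphs with $\Omega_1 \cap \Gamma_i(x) = \Omega_2 \cap \Gamma_i(x)$. Pick $y$ in the common intersection; then both $\Omega_1$ and $\Omega_2$ are weak-geodetically closed subgraphs of diameter $i$ containing $x$ and $y$, and the uniqueness clause of Lemma \ref{lem:WGCfind} forces $\Omega_1 = \Omega_2$. The one spot to be careful is the boundary case $i=0$, but there $\Omega = \{x\} = \Gamma_0(x)$ is the only object on either side, so the bijection is trivial. I do not anticipate a significant obstacle; the only mild subtlety is confirming that the internally-derived $\Omega$ satisfies the hypotheses needed to invoke Corollary \ref{cor:LastSub}, which is handled by Lemma \ref{lem:WGC} together with the nonbipartiteness of $\Gamma$.
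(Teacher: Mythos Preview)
Your proof is correct and follows essentially the same route as the paper: use Lemma~\ref{lem:WGC} to recognize $\Omega$ as a nonbipartite dual polar graph, apply Corollary~\ref{cor:LastSub} inside $\Omega$ to get connectedness of $\Omega\cap\Gamma_i(x)$, use the weak-geodetic closure condition (Definition~\ref{def:WGC}) to show this set is a full component, and invoke Lemma~\ref{lem:WGCfind} for bijectivity. The paper phrases the ``full component'' step as a proof by contradiction and handles injectivity/surjectivity in one line via Lemma~\ref{lem:WGCfind}, whereas you spell both out separately, but the underlying argument is the same.
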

\begin{proof} Assume that $i\not=0$; otherwise the result is trivial.
Let $\Omega$ denote a weak-geodetically closed subgraph of $\Gamma$ that contains $x$ and has diameter $i$. We show that $\Omega \cap \Gamma_i(x)$ is a connected component of 
$\Gamma_i(x)$. By Lemma \ref{lem:WGC}, $\Omega$ is a nonbipartite dual polar graph.
Applying Corollary  \ref{cor:LastSub}  to $\Omega$, we see that the subgraph induced on $\Omega \cap \Gamma_i(x)$ is connected. Therefore, there exists a connected component  $\Delta$ of $\Gamma_i(x)$
that contains $\Omega \cap \Gamma_i(x)$. Assume for the moment that this containment is proper. Since $\Delta$ is connected, there exist adjacent vertices $y, \xi \in \Delta$ such that $y \in \Omega\cap \Gamma_i(x)$
and $\xi \not\in \Omega\cap \Gamma_i(x)$. By construction $x, y \in \Omega$. Also by construction,
\begin{align*}
\partial(x,\xi)+ \partial(\xi,y) = i +1=\partial(x,y)+1.
\end{align*}
By these comments and Definition  \ref{def:WGC}, $\xi \in \Omega$. This yields $\xi \in \Omega \cap \Delta \subseteq \Omega\cap \Gamma_i(x)$, for a contradiction. 
We have shown that $\Omega \cap \Gamma_i(x)$ is equal to $\Delta$ and is therefore  a connected component of 
$\Gamma_i(x)$. By our discussion so far, there exists a function from (i) to (ii) that sends $\Omega \mapsto \Omega \cap \Gamma_i(x)$. This function is  bijective in view of Lemma \ref{lem:WGCfind}.
\end{proof}

\begin{lemma} \label{lem:cc} The following hold for  $0 \leq i \leq D$.
\begin{enumerate}
\item[\rm (i)]
 Each connected component of $\Gamma_i(x)$ has cardinality $(a_1+1)^i q^{\binom{i}{2}}$.
 \item[\rm (ii)]
The number of connected components in $\Gamma_i(x)$ is equal to $\binom{D}{i}_q$.
\end{enumerate}
\end{lemma}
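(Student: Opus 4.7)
My plan is to prove (i) first and then deduce (ii) by a straightforward counting argument, bypassing any need to construct an explicit bijection between weak-geodetically closed subgraphs and subspaces of $x$.

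For (i), I would apply Lemma \ref{lem:NS2}, which identifies each connected component of $\Gamma_i(x)$ with $\Omega \cap \Gamma_i(x)$ for a unique weak-geodetically closed subgraph $\Omega$ of $\Gamma$ that contains $x$ and has diameter $i$. By Lemma \ref{lem:WGC}, $\Omega$ is itself a dual polar graph whose intersection numbers $c_j(\Omega), a_j(\Omega)$ agree with $c_j, a_j$ of $\Gamma$ for $0 \leq j \leq i$; in particular the parameters $q$ and $a_1$ are the same for $\Omega$ as for $\Gamma$. Since $\Omega$ is weak-geodetically closed, distances within $\Omega$ match distances within $\Gamma$, so $\Omega \cap \Gamma_i(x) = \Omega_i(x)$ is the set of vertices of $\Omega$ at distance $i$ from $x$. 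Applying Lemma \ref{lem:ki} to $\Omega$ (which has diameter $i$, so the ``top'' valency $k_i(\Omega)$ is given by that formula) yields
\begin{align*}
|\Omega \cap \Gamma_i(x)| = k_i(\Omega) = (a_1+1)^i q^{\binom{i}{2}},
\end{align*}
proving (i).

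For (ii), I would compute the count by dividing the total cardinality of $\Gamma_i(x)$ by the size of each connected component. By Lemma \ref{lem:ki} applied to $\Gamma$,
\begin{align*}
|\Gamma_i(x)| = k_i = (a_1+1)^i q^{\binom{i}{2}} \binom{D}{i}_q.
\end{align*}
Since $\Gamma_i(x)$ partitions into its connected components, each of cardinality $(a_1+1)^i q^{\binom{i}{2}}$ by (i), the number of components equals $\binom{D}{i}_q$.

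There isn't really a significant obstacle here; the work has been done by Lemmas \ref{lem:NS2}, \ref{lem:WGC}, and \ref{lem:ki}. The only mild subtlety is verifying that the distance function used when invoking Lemma \ref{lem:ki} on the subgraph $\Omega$ is consistent with the ambient distance, which is guaranteed by the weak-geodetic closure of $\Omega$.
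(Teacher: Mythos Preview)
Your proposal is correct and follows essentially the same route as the paper: invoke Lemma~\ref{lem:NS2} to realize each component as $\Omega \cap \Gamma_i(x)$ for a weak-geodetically closed $\Omega$, use Lemma~\ref{lem:WGC} to identify $\Omega$ as a dual polar graph with the same $q$ and $a_1$, apply Lemma~\ref{lem:ki} to $\Omega$ for part~(i), and then divide $k_i$ by the common component size for part~(ii). Your remark that distances in $\Omega$ agree with ambient distances (so that $\Omega \cap \Gamma_i(x) = \Omega_i(x)$) is a helpful clarification the paper leaves implicit.
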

\begin{proof} (i) By Lemma  \ref{lem:NS2},  each connected component of $\Gamma_i(x)$ has the form $\Omega \cap \Gamma_i(x)$, where $\Omega$
is a weak-geodetically closed subgraph of $\Gamma$ that contains $x$ and has diameter $i$. By Lemma  \ref{lem:WGC}, $\Omega$ is a dual polar graph.
By Lemma \ref{lem:ki} (applied to $\Omega$) and  Lemma  \ref{lem:WGC},
\begin{align*} 
\vert \Omega \cap \Gamma_i(x) \vert = k_i(\Omega) = (a_1+1)^i q^{\binom{i}{2}}.
\end{align*}
\noindent (ii) The desired number is equal to $\vert \Gamma_i(x) \vert$ divided by the common cardinality from (i). By Lemma \ref{lem:ki},
\begin{align*}
\vert \Gamma_i(x) \vert = k_i = (a_1+1)^i q^{\binom{i}{2}} \binom{D}{i}_q.
\end{align*}
By these comments, the desired number is equal to $\binom{D}{i}_q$.
\end{proof}

\begin{proposition} \label{prop:END} We have
\begin{align*}
{\rm dim}\, E^*_i\mathcal N = \binom{D}{i}_q \qquad \qquad (0 \leq i \leq D).
\end{align*}
\end{proposition}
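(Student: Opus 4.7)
The plan is to combine two facts that have already been established in the preceding development. By Corollary \ref{cor:ENdim}, the dimension $\dim E^*_i \mathcal{N}$ equals the number of connected components of $\Gamma_i(x)$. On the other hand, Lemma \ref{lem:cc}(ii) identifies this number of connected components with the $q$-binomial coefficient $\binom{D}{i}_q$. Chaining these two equalities immediately yields the desired formula.

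So the only step to write is the citation chain: invoke Corollary \ref{cor:ENdim} to reduce $\dim E^*_i\mathcal{N}$ to the count of connected components of $\Gamma_i(x)$, then apply Lemma \ref{lem:cc}(ii) to evaluate that count as $\binom{D}{i}_q$. There is no obstacle here, since the substantive work has already been done: Lemma \ref{lem:cc} used the bijection between weak-geodetically closed subgraphs of diameter $i$ through $x$ and connected components of $\Gamma_i(x)$ (Lemma \ref{lem:NS2}), together with the valency formula for the dual polar graph structure on each such $\Omega$.

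In other words, I expect the proof to be a one-line citation. No calculation, no additional machinery beyond what has been assembled.
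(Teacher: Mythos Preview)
Your proposal is correct and matches the paper's proof exactly: the paper writes simply ``By Corollary \ref{cor:ENdim} and Lemma \ref{lem:cc}(ii).''
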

\begin{proof} By Corollary \ref{cor:ENdim} and Lemma  \ref{lem:cc}(ii).
\end{proof}

\begin{corollary} \label{cor:Ndimf}
We have
\begin{align*}
{\rm dim}\, \mathcal N = \sum_{i=0}^D \binom{D}{i}_q.
\end{align*}
\end{corollary}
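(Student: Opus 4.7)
The plan is to derive Corollary \ref{cor:Ndimf} as an immediate consequence of Proposition \ref{prop:END} together with the orthogonal direct sum decomposition of $\mathcal N$ established in Lemma \ref{lem:TwoSum}.

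First I would recall from Lemma \ref{lem:TwoSum} that $\mathcal N = \sum_{i=0}^D E^*_i \mathcal N$ is a direct sum (in fact orthogonal, which we do not need here). Taking dimensions across this direct sum yields
\begin{align*}
{\rm dim}\, \mathcal N = \sum_{i=0}^D {\rm dim}\, E^*_i \mathcal N.
\end{align*}
Then I would substitute the formula ${\rm dim}\, E^*_i\mathcal N = \binom{D}{i}_q$ provided by Proposition \ref{prop:END} into each summand, which gives the claimed identity.

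There is no real obstacle here, since all the work has already been done upstream: Proposition \ref{prop:END} rests on Corollary \ref{cor:ENdim} (the dimension equals the number of connected components of $\Gamma_i(x)$) and Lemma \ref{lem:cc}(ii) (which in turn uses the bijection of Lemma \ref{lem:NS2} with weak-geodetically closed subgraphs through $x$ of diameter $i$, and the valency formula in Lemma \ref{lem:ki} applied to such a subgraph). Thus the present corollary is simply a one-line aggregation.
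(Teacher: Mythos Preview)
Your proof is correct and matches the paper's own argument exactly: the paper also cites Lemma~\ref{lem:TwoSum} and Proposition~\ref{prop:END}, combining the direct sum $\mathcal N=\sum_{i=0}^D E^*_i\mathcal N$ with the formula ${\rm dim}\,E^*_i\mathcal N=\binom{D}{i}_q$.
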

\begin{proof} By Lemma \ref{lem:TwoSum} and Proposition  \ref{prop:END}.
\end{proof}

\noindent Recall the multiplicity numbers ${\rm mult}_r$ from Definition  \ref{def:di}. Recall that ${\rm mult}_0=1$. 

\begin{proposition} We have
\begin{align*}
{\rm mult}_r = \binom{D}{r}_q - \binom{D}{r-1}_q \qquad \quad (1 \leq r \leq D/2).
\end{align*}
\end{proposition}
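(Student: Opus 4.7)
The plan is straightforward: the result should follow by telescoping the identity already established in Corollary~\ref{cor:multN} combined with the count in Lemma~\ref{lem:cc}(ii).

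First I would observe that Corollary~\ref{cor:multN} gives, for all $0 \leq i \leq D/2$,
\begin{align*}
\sum_{r=0}^i {\rm mult}_r = \hbox{number of connected components of }\Gamma_i(x).
\end{align*}
By Lemma~\ref{lem:cc}(ii), this number equals $\binom{D}{i}_q$. Thus
\begin{align*}
\sum_{r=0}^i {\rm mult}_r = \binom{D}{i}_q \qquad (0 \leq i \leq D/2).
\end{align*}

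Next I would take the difference of the formula at $i=r$ and at $i=r-1$. For $1 \leq r \leq D/2$, both $r$ and $r-1$ lie in the admissible range $[0,D/2]$, so the subtraction is legitimate, and it yields
\begin{align*}
{\rm mult}_r = \binom{D}{r}_q - \binom{D}{r-1}_q,
\end{align*}
which is exactly the claimed formula. As a sanity check, at $r=0$ the same identity (with the convention $\binom{D}{-1}_q=0$) recovers ${\rm mult}_0 = \binom{D}{0}_q = 1$, consistent with the remark preceding the proposition.

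There is no real obstacle here: the work has all been done earlier. Lemma~\ref{lem:cc}(ii), itself obtained by passing to the weak-geodetically closed subgraphs through $x$ of diameter $i$ and using Lemma~\ref{lem:ki} applied to those sub-dual-polar graphs, provides the $q$-binomial count; Corollary~\ref{cor:multN} provides the link between these counts and the partial sums of multiplicities via Proposition~\ref{prop:Six} and Corollary~\ref{cor:ENdim}. The proof of the proposition is therefore just the telescoping step.
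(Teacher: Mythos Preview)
Your proof is correct and follows essentially the same route as the paper. The paper cites Propositions~\ref{prop:Six} and~\ref{prop:END} directly to obtain $\sum_{r=0}^i {\rm mult}_r = \binom{D}{i}_q$ for $0 \leq i \leq D/2$, while you pass through Corollary~\ref{cor:multN} and Lemma~\ref{lem:cc}(ii) to reach the same identity; since these results are immediate consequences of one another, the two arguments are equivalent, and both finish with the same telescoping step.
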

\begin{proof} By Propositions  \ref{prop:Six}, \ref{prop:END} we obtain
\begin{align*}
\sum_{r=0}^i {\rm mult}_r = \binom{D}{i}_q \qquad \qquad (0 \leq i \leq D/2).
\end{align*}
The result follows.
\end{proof}

\noindent  Recall  the equivalence relation $\sim$ from Definition \ref{def:sim}.
\begin{lemma} \label{lem:equit} Consider the partition of $X$ into $\sim$ equivalence classes. This partition is equitable in the sense of \cite[Section~A.4]{Nbcn}.
\end{lemma}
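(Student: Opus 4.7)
My plan is to exploit the fact that $\mathcal N$ is simultaneously (a) a $T$-module, hence invariant under $A$, and (b) spanned by the characteristic vectors of the $\sim$ equivalence classes.

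First I would recall that by Lemma \ref{lem:Interp}(i) (and Theorem \ref{thm:ND}) the nucleus $\mathcal N$ is a $T$-module, so in particular $A \mathcal N \subseteq \mathcal N$. On the other hand, Theorem \ref{thm:DPN} asserts that $\mathcal N$ has a basis consisting of the characteristic vectors $\{\chi_C\}$ indexed by the $\sim$ equivalence classes $C \subseteq X$. Combining these two facts, for each equivalence class $C$ the vector $A\chi_C$ lies in $\mathcal N$, and so admits a (unique) expansion
\begin{equation*}
A\chi_C = \sum_{C'} \alpha_{C',C}\, \chi_{C'}
\end{equation*}
as a linear combination of characteristic vectors of equivalence classes.

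Next I would translate this identity into the equitability condition. For any vertex $v \in X$ the $v$-th coordinate of $A\chi_C$ equals $|\Gamma(v)\cap C|$, the number of neighbors of $v$ that lie in $C$. On the other hand, if $v \in C'$ then the above expansion gives $(A\chi_C)_v = \alpha_{C',C}$. Hence for every class $C'$ and every $v\in C'$,
\begin{equation*}
|\Gamma(v)\cap C| = \alpha_{C',C},
\end{equation*}
a constant depending only on the pair $(C',C)$ and not on the particular choice of $v\in C'$. This is precisely the defining property of an equitable partition in the sense of \cite[Section~A.4]{Nbcn}.

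No step here is delicate: the calculation is routine once one has (a) the $T$-invariance of $\mathcal N$ and (b) the basis description of $\mathcal N$ by characteristic vectors of $\sim$-classes. The real content of the lemma has already been done in assembling Theorem \ref{thm:DPN}; the only thing to check is that $A$-invariance of a subspace spanned by a partition's characteristic vectors is literally equivalent to the partition being equitable, which is the short translation above.
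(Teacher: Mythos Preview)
Your proof is correct and follows essentially the same approach as the paper: the paper's proof is simply ``The nucleus $\mathcal N$ is a $T$-module, so $A\mathcal N \subseteq \mathcal N$. The result follows in view of Theorem~\ref{thm:DPN},'' and you have just unpacked the standard translation between $A$-invariance of the span of partition characteristic vectors and equitability.
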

\begin{proof} The nucleus $\mathcal N$ is a $T$-module, so $A \mathcal N \subseteq \mathcal N$. The result follows in view
of Theorem  \ref{thm:DPN}.
\end{proof}
\noindent  In Lemma \ref{lem:equit} we obtained an equitable partition of $X$. Our next goal is to compute the corresponding structure constants. This will be
done over the next two lemmas.

\begin{lemma} \label{lem:equit1} Let $0 \leq i \leq D$, and let $\Delta$ denote a $\sim$ equivalence class contained in $\Gamma_i(x)$.
Then each vertex in $\Delta$ is adjacent to exactly $a_i$ vertices in $\Delta$.
\end{lemma}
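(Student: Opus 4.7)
The plan is to identify the $\sim$ equivalence class $\Delta$ with a connected component of the subgraph induced on $\Gamma_i(x)$, and then show that every neighbor (in $\Gamma$) of a vertex of $\Delta$ that lies in $\Gamma_i(x)$ must in fact lie in $\Delta$. The count then falls out from the intersection number $a_i$.

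More explicitly, I would first invoke Lemma \ref{lem:simMeaning} to identify $\Delta$ as a connected component of the subgraph induced on $\Gamma_i(x)$. Fix $y \in \Delta$. By definition of the intersection number $a_i$, since $\partial(x,y) = i$, the vertex $y$ has exactly $a_i$ neighbors $z \in \Gamma(y)$ with $\partial(x,z) = i$, i.e., exactly $a_i$ neighbors lying in $\Gamma_i(x)$. The remaining claim is that all of these neighbors lie in $\Delta$.

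For this I would simply observe: if $z \in \Gamma_i(x)$ is adjacent to $y$ in $\Gamma$, then the edge $\{y,z\}$ is present in the subgraph induced on $\Gamma_i(x)$, so $y$ and $z$ lie in the same connected component of $\Gamma_i(x)$. Since $\Delta$ is the connected component of $\Gamma_i(x)$ containing $y$, this forces $z \in \Delta$. Thus the $a_i$ neighbors of $y$ in $\Gamma_i(x)$ are exactly its neighbors in $\Delta$, completing the proof.

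There is essentially no obstacle here; the lemma is a direct bookkeeping consequence of Lemma \ref{lem:simMeaning} together with the definition of $a_i$. (As an alternative, one could instead write $\Delta = \Omega \cap \Gamma_i(x)$ for the weak-geodetically closed subgraph $\Omega$ of Lemma \ref{lem:NS2} and use Definition \ref{def:WGC} with $\partial(x,z)+\partial(z,y) = i+1 = \partial(x,y)+1$ to conclude $z \in \Omega$, but this is unnecessary overhead.)
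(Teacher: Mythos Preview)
Your proof is correct and follows essentially the same approach as the paper's own proof, which is simply the one-line observation that the subgraph induced on $\Gamma_i(x)$ is regular with valency $a_i$ and $\Delta$ is a connected component of it. You have merely spelled out in detail what the paper leaves implicit.
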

\begin{proof} The subgraph $\Gamma_i(x)$ is regular with valency $a_i$, and $\Delta$ is a connected component of $\Gamma_i(x)$. 
\end{proof}

\begin{lemma} \label{lem:equit2} Let $1 \leq i \leq D$, and let $\Delta^{\pm}$  denote $\sim$ equivalence classes such that $\Delta^- \subseteq \Gamma_{i-1}(x)$
and $\Delta^+ \subseteq \Gamma_i(x)$. Assume that there exists at least one edge between $\Delta^-$ and $\Delta^+$.
 Then:
\begin{enumerate} 
\item[\rm (i)]  each vertex in $\Delta^+$ is adjacent to exactly $1$ vertex in $\Delta^-$;
\item[\rm (ii)] each vertex in $\Delta^-$ is adjacent to exactly $(a_1+1)q^{i-1}$ vertices in $\Delta^+$.
\end{enumerate}
\end{lemma}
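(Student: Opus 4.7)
The plan is to prove (ii) first by working inside the weak-geodetically closed subgraph of $\Gamma$ that corresponds to $\Delta^+$, and then to derive (i) from (ii) by an equitability and double-counting argument. By Lemma \ref{lem:NS2}, one may write $\Delta^+=\Omega^+\cap\Gamma_i(x)$ and $\Delta^-=\Omega^-\cap\Gamma_{i-1}(x)$, where $\Omega^+,\Omega^-$ are the unique weak-geodetically closed subgraphs of $\Gamma$ through $x$ of diameters $i$ and $i-1$ respectively. The main obstacle is the preliminary step of deducing $\Omega^-\subseteq\Omega^+$ from the edge hypothesis; once this containment is in hand, everything else reduces to bookkeeping with the intersection numbers of $\Omega^+$.

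To carry out this preliminary step, fix an edge $y_0 z_0$ between $\Delta^+$ and $\Delta^-$. Since $\partial(x,z_0)+\partial(z_0,y_0)=(i-1)+1=\partial(x,y_0)$, Definition \ref{def:WGC} applied to $\Omega^+$ forces $z_0\in\Omega^+$. Applying Lemma \ref{lem:WGCfind} inside $\Omega^+$ produces a unique weak-geodetically closed subgraph of $\Omega^+$ of diameter $i-1$ containing $x$ and $z_0$; a routine check (pushing the WGC condition through $\Omega^+$) confirms it is also weak-geodetically closed in $\Gamma$, so by the uniqueness in Lemma \ref{lem:WGCfind} applied in $\Gamma$ it must coincide with $\Omega^-$. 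Hence $\Omega^-\subseteq\Omega^+$.

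For (ii), any $z\in\Delta^-\subseteq\Omega^+$ sits at distance $i-1$ from $x$ inside $\Omega^+$. By Lemma \ref{lem:WGC}, $\Omega^+$ is distance-regular with valency $c_i+a_i$ and with intersection numbers $c_j,a_j$ common with $\Gamma$, so $z$ has exactly $b_{i-1}(\Omega^+)=(c_i+a_i)-(c_{i-1}+a_{i-1})$ neighbors in $\Omega^+\cap\Gamma_i(x)=\Delta^+$, and Lemma \ref{lem:inter} simplifies this number to $(a_1+1)q^{i-1}$. Since $\Delta^+\subseteq\Omega^+$, no $\Gamma$-neighbor of $z$ lying in $\Delta^+$ is missed. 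For (i), Lemma \ref{lem:equit} makes the $\sim$ partition equitable, so the number $n$ of neighbors in $\Delta^-$ of each $y\in\Delta^+$ is independent of $y$. A two-way count of the edges between $\Delta^+$ and $\Delta^-$, combined with the cardinalities from Lemma \ref{lem:cc}(i), produces
\[
n\,|\Delta^+| \;=\; (a_1+1)q^{i-1}\,|\Delta^-| \;=\; (a_1+1)^i\,q^{(i-1)+\binom{i-1}{2}} \;=\; |\Delta^+|,
\]
so $n=1$.
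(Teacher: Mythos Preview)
Your proof is correct, but the route differs from the paper's. The paper proves (i) first: it uses equitability to get constants $m$ (from $\Delta^+$ to $\Delta^-$) and $n$ (from $\Delta^-$ to $\Delta^+$), double counts to obtain $n=(a_1+1)q^{i-1}m$, and then shows $m=1$ by a short contradiction argument---if some $y\in\Delta^+$ had two neighbors $z_1,z_2\in\Delta^-$, then the weak-geodetic closure of $\Omega^-$ would force $y\in\Omega^-$, impossible since $\partial(x,y)=i$ exceeds the diameter of $\Omega^-$. Part (ii) then follows immediately from $n=(a_1+1)q^{i-1}$. You instead establish the containment $\Omega^-\subseteq\Omega^+$ up front (using weak-geodetic closure of $\Omega^+$ plus the uniqueness in Lemma~\ref{lem:WGCfind}, applied inside the dual polar graph $\Omega^+$), compute (ii) directly as $b_{i-1}(\Omega^+)$, and only then invoke equitability and double counting to get (i). The paper's argument is a bit shorter and uses only the definition of weak-geodetic closure; your argument is more structural, making the inclusion $\Omega^-\subseteq\Omega^+$ explicit, and has the mild advantage that (ii) is obtained for each individual $z\in\Delta^-$ without appeal to equitability.
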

\begin{proof} By Lemma \ref{lem:equit}, there exist nonnegative integers $m,n$ such that
\begin{enumerate} 
\item[$\bullet$]  each vertex in $\Delta^+$ is adjacent to exactly $m$ vertex in $\Delta^-$;
\item[$\bullet$] each vertex in $\Delta^-$ is adjacent to exactly $n$ vertices in $\Delta^+$.
\end{enumerate}
\noindent  By assumption, there exists at least one edge between $\Delta^-$ and $\Delta^+$. Counting the number of such edges in two ways, we obtain
\begin{align*}
\vert \Delta^- \vert n = \vert \Delta^+ \vert m.
\end{align*}
By Lemmas  \ref{lem:simMeaning},  \ref{lem:cc}(i) we have
\begin{align*}
\vert \Delta^- \vert = (a_1 +1)^{i-1} q^{\binom{i-1}{2}}, \qquad \qquad \vert \Delta^+ \vert = (a_1 +1)^i q^{\binom{i}{2}}.
\end{align*}
By these comments, 
\begin{align*}
n= (a_1+1)q^{i-1}m.
\end{align*}
 It remains to show that $m=1$. By construction, $m\geq 1$. We assume that $m\geq 2$,
and get a contradiction. Pick $y \in \Delta^+$. By assumption, there exist distinct vertices $z_1, z_2 \in \Delta^-\cap \Gamma(y)$. Observe that
 $\partial(z_1, z_2) \in \lbrace 1,2\rbrace$.
By Lemmas \ref{lem:simMeaning}, \ref{lem:NS2} there exists a weak-geodetically closed subgraph $\Omega$ of $\Gamma$ that contains $x$ and has diameter $i-1$, such that
$\Delta^- = \Omega \cap \Gamma_{i-1}(x)$. We have $z_1, z_2 \in \Delta^-\subseteq \Omega$. We have $\partial(z_1,y)+\partial(y,z_2) = 1+1\leq \partial(z_1,z_2)+1$,
so $y \in \Omega$ in view of Definition \ref{def:WGC}. We have $x,y \in \Omega$ and $\partial(x,y)=i$, contradicting the assumption that $\Omega$ has diameter $i-1$.
We have shown that $m=1$, and the result follows.
\end{proof}

\section{The projective geometry $L_D(q)$}

We continue to discuss the nonbipartite dual polar graph $\Gamma=(X,\mathcal R)$. Throughout this section, we fix
$x \in X$ and write $T=T(x)$. In this section, we explain how the nucleus $\mathcal N = \mathcal N(x)$
is related to the projective geometry $L_D(q)$. 
\medskip

\noindent
We now  define $L_D(q)$. We will work with the finite field 
$GF(q)$ associated with $\Gamma$ from Example \ref{ex:dp}.

\begin{definition}\label{def:LDq} \rm Let $\bf V$ denote a vector space over $GF(q)$ that has dimension $D$.
 Let the set $\mathcal P$ consist of the subspaces of $\bf V$. 
 Define a partial order $\leq$ on $\mathcal P$ such that for $\eta, \zeta \in \mathcal P$,
$\eta \leq \zeta$ whenever $\eta \subseteq \zeta$.
The poset $\mathcal P, \leq $ is denoted $L_D(q)$ and called a {\it projective geometry}.  
\end{definition}
\noindent For basic 
 information about $L_D(q)$, see \cite[Section~9.3]{Nbcn},  \cite[Chapter~1]{Ncameron},
 \cite{murali, murali2}, 
  \cite[Example 3.1(5) with $M=N$]{uniform},  \cite[Section~7]{LSintro}, \cite{Lnq, LNq2}.
\medskip

\noindent By Example \ref{ex:dp}  and the construction, the vertex $x$ is a vector space over $GF(q)$ that has dimension $D$. 
For notational convenience, we always take the ${\bf V} = x$.

\begin{definition}\label{def:PP} \rm 
 For $\eta, \zeta \in \mathcal P$, we say  that {\it $\zeta$ covers $\eta$} whenever $\eta \subseteq \zeta$ and ${\rm dim}\, \zeta - {\rm dim}\, \eta = 1$. 
We say that $\eta, \zeta$ are {\it adjacent} whenever one of $\eta, \zeta$ covers the other one. The set $\mathcal P$  together with the adjacency
relation, forms an undirected graph.
For $\eta \in \mathcal P$, let the set $\mathcal P(\eta)$ consist
of the elements in $\mathcal P$ that are adjacent to $\eta$. For $0 \leq i \leq D$, let the set $\mathcal P_i$ consist of the elements in $\mathcal P$
that have dimension $D-i$. Note that $\mathcal P_0 = \lbrace x \rbrace$. For notational convenience, define $\mathcal P_{-1}=\emptyset$ and
$\mathcal P_{D+1}=\emptyset$.
\end{definition}

\noindent In the next two lemmas, we describe some basic combinatorial features of $\mathcal P$.

\begin{lemma} \label{lem:YYi} For $0 \leq i \leq D$, 
each vertex in $\mathcal P_i$ is adjacent to exactly  $\lbrack i \rbrack_q$ vertices in $\mathcal P_{i-1}$ and exactly $\lbrack D-i \rbrack_q$ vertices in $\mathcal P_{i+1}$.
\end{lemma}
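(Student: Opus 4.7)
\medskip

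\noindent \emph{Proof plan for Lemma \ref{lem:YYi}.} The plan is to translate the adjacency condition from Definition \ref{def:PP} into standard linear-algebraic counting statements about subspaces of an $i$-dimensional (respectively, $(D-i)$-dimensional) vector space over $GF(q)$, and then invoke the well-known formula for the number of one-dimensional subspaces of an $n$-dimensional space over $GF(q)$, namely $\lbrack n \rbrack_q = (q^n-1)/(q-1)$.

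Fix $\eta \in \mathcal P_i$, so $\eta$ is a subspace of $\mathbf V = x$ of dimension $D-i$. By Definition \ref{def:PP}, a vertex $\zeta \in \mathcal P_{i-1}$ is adjacent to $\eta$ exactly when $\eta \subseteq \zeta$ and ${\rm dim}\,\zeta = D-i+1$. Such subspaces $\zeta$ are in bijection, under the correspondence $\zeta \mapsto \zeta/\eta$, with the one-dimensional subspaces of the quotient space $\mathbf V / \eta$. Since ${\rm dim}\, \mathbf V / \eta = i$, the number of such one-dimensional subspaces equals $\lbrack i \rbrack_q$, and this gives the first count.

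For the second count, a vertex $\zeta \in \mathcal P_{i+1}$ is adjacent to $\eta$ exactly when $\zeta \subseteq \eta$ and ${\rm dim}\,\zeta = D-i-1$, so $\zeta$ is a hyperplane of $\eta$. By the standard duality between hyperplanes and one-dimensional subspaces of the dual space $\eta^*$, the hyperplanes of the $(D-i)$-dimensional space $\eta$ are in bijection with the one-dimensional subspaces of $\eta^*$, of which there are $\lbrack D-i \rbrack_q$. This gives the second count and completes the proof.

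This is essentially a routine fact in the combinatorics of the subspace lattice, so no real obstacle is expected; the only thing to be careful about is matching the labelling convention in Definition \ref{def:PP}, where $\mathcal P_i$ consists of subspaces of codimension $i$ in $\mathbf V$ (so elements of $\mathcal P_{i-1}$ are larger and elements of $\mathcal P_{i+1}$ are smaller). With that convention fixed, the two $q$-binomial counts above deliver the lemma immediately.
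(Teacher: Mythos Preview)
Your argument is correct and is precisely the ``routine combinatorial counting'' the paper invokes without details (the paper's proof consists only of a one-line reference to \cite[Section~9.3]{Nbcn}). You have also handled the indexing convention of Definition~\ref{def:PP} correctly, so there is nothing to add.
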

\begin{proof} By routine combinatorial counting; see for example \cite[Section~9.3]{Nbcn}.
\end{proof}

\noindent The following result is well known; see for example \cite[Lemma~9.3.2]{Nbcn}.

\begin{lemma} \label{lem:SZ} {\rm (See \cite[Lemma~9.3.2]{Nbcn}.)}  We have
\begin{align*}
\vert \mathcal P_i \vert =  \binom{D}{i}_q \qquad \quad (0 \leq i \leq D).
\end{align*}
\end{lemma}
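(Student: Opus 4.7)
The plan is to prove this by a direct counting argument, exploiting the fact that $\mathcal{P}_i$ consists of the $(D-i)$-dimensional subspaces of the $D$-dimensional $GF(q)$-vector space $\mathbf{V} = x$, and then to recognize the result as the standard formula for the Gaussian binomial coefficient $\binom{D}{D-i}_q = \binom{D}{i}_q$.

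First, I would count ordered bases. The number of ordered $(D-i)$-tuples of linearly independent vectors in $\mathbf{V}$ is
\begin{align*}
(q^D-1)(q^D-q)(q^D-q^2)\cdots (q^D - q^{D-i-1}),
\end{align*}
obtained by choosing the first vector to be any nonzero vector ($q^D - 1$ choices), and then each subsequent vector to lie outside the span of the previously chosen ones. Next, the same reasoning applied inside a fixed $(D-i)$-dimensional subspace $\eta$ shows that the number of ordered bases of $\eta$ is
\begin{align*}
(q^{D-i}-1)(q^{D-i}-q)\cdots(q^{D-i}-q^{D-i-1}).
\end{align*}

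Dividing the former by the latter (since each subspace of dimension $D-i$ contributes exactly that many ordered bases to the count of ordered linearly independent tuples), we obtain
\begin{align*}
|\mathcal{P}_i| = \frac{\prod_{j=0}^{D-i-1}(q^D - q^j)}{\prod_{j=0}^{D-i-1}(q^{D-i}-q^j)}.
\end{align*}
A routine manipulation—factoring out the appropriate powers of $q$ from numerator and denominator and using $\lbrack n \rbrack_q = (q^n-1)/(q-1)$—rewrites this as
\begin{align*}
|\mathcal{P}_i| = \frac{\lbrack D \rbrack^!_q}{\lbrack D-i \rbrack^!_q \lbrack i \rbrack^!_q} = \binom{D}{D-i}_q = \binom{D}{i}_q,
\end{align*}
using the symmetry $\binom{D}{D-i}_q = \binom{D}{i}_q$ of the $q$-binomial coefficients.

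There is no real obstacle here; this is a well-known calculation, and the author is citing \cite[Lemma~9.3.2]{Nbcn} presumably so as not to reprove it. The only mild bookkeeping step is the conversion from the product forms $\prod (q^n - q^j)$ to the quotient of $q$-factorials, but this is immediate after factoring $q^j$ from each factor in both numerator and denominator and cancelling.
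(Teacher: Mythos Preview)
Your proof is correct; it is the standard counting argument for the number of subspaces of a given dimension in a finite-dimensional vector space over $GF(q)$. The paper itself gives no proof of this lemma at all: it merely states the result as well known and cites \cite[Lemma~9.3.2]{Nbcn}, so your argument supplies exactly the kind of elementary verification the author chose to omit.
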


\noindent  We will use $\mathcal P$ to obtain a basis for $\mathcal N$.
\medskip

\begin{definition}\label{def:etaV} \rm For $\eta \in \mathcal P$ let  $\eta^\vee$ denote the subgraph of $\Gamma$ consisting of the vertices in $X$ that contain $\eta$.
\end{definition}

\begin{lemma} \label{lem:etaV} The following hold for  $\eta \in \mathcal P$:
\begin{enumerate}
\item[\rm (i)] $\eta^\vee$ is a weak-geodetically closed subgraph of $\Gamma$ that contains $x$ and has diameter $i = D-{\rm dim}\,\eta$;
\item[\rm (ii)] $\eta^\vee \cap \Gamma_i(x) = \lbrace \xi \in X \vert \eta =x \cap \xi \rbrace$.
\end{enumerate}
\end{lemma}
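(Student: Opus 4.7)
The plan is to handle the two parts separately. Part (ii) is a routine dimension count, while part (i) splits into three tasks, the last of which is the main obstacle.

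For part (i), observe first that $x \in \eta^{\vee}$ because Definition \ref{def:LDq} takes ${\bf V} = x$, so $\eta \in \mathcal{P}$ is automatically a subspace of $x$. Next, I would bound the diameter using the formula $\partial(y,z) = D - \dim(y \cap z)$ from Example \ref{ex:dp}: any two $y, z \in \eta^{\vee}$ both contain $\eta$, so $\partial(y,z) \leq D - \dim\eta = i$. To show the bound is attained, apply Witt's extension theorem (or a direct construction of a complementary maximal isotropic) to produce $\xi \in X$ with $x \cap \xi = \eta$, giving $\partial(x,\xi) = i$. Hence the diameter equals $i$ exactly.

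The substantive step is weak-geodetic closure of $\eta^{\vee}$. Take $y, z \in \eta^{\vee}$ and $\xi \in X$ with $\partial(y,\xi) + \partial(\xi,z) \leq \partial(y,z) + 1$, and set $d = \partial(y,z)$. By Lemma \ref{lem:WGCfind} there is a unique weak-geodetically closed subgraph $\Omega$ of $\Gamma$ of diameter $d$ containing $y$ and $z$, and $\xi \in \Omega$ by definition of weak-geodetic closure. The strategy is to identify $\Omega$: by the classification of strongly closed subgraphs of dual polar graphs (see \cite[Section~1]{weng97}, \cite{weng98}), every such subgraph has the form $\mu^{\vee}$ for some isotropic $\mu$, and $\Omega$ is forced to be $(y \cap z)^{\vee}$. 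Since $\eta \subseteq y \cap z$, this yields $\Omega \subseteq \eta^{\vee}$, and in particular $\xi \in \eta^{\vee}$. If one prefers a self-contained route, the alternative is to translate the betweenness inequality via the distance formula and apply the modular identity to $(y \cap \xi) + (z \cap \xi) \subseteq \xi$, obtaining $\dim(y \cap z \cap \xi) \geq \dim(y \cap z) - 1$, and then exploit the isotropy of $\xi$ to rule out the codimension-one deviation and conclude $y \cap z \subseteq \xi$. The main obstacle is precisely this last step: pure dimension counting leaves a codimension-one case open, and eliminating it requires the polar-space structure, which is what the classification encapsulates.

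For part (ii), a vertex $\xi$ lies in $\eta^{\vee} \cap \Gamma_i(x)$ iff $\eta \subseteq \xi$ and $\partial(x,\xi) = i$, equivalently $\eta \subseteq \xi$ and $\dim(x \cap \xi) = D - i = \dim\eta$. Since $\eta \subseteq x$ and $\eta \subseteq \xi$ force $\eta \subseteq x \cap \xi$, dimensional equality upgrades this to $x \cap \xi = \eta$; conversely, $x \cap \xi = \eta$ gives both $\eta \subseteq \xi$ and $\partial(x,\xi) = i$, completing the description.
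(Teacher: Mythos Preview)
Your proposal is correct, and for part (ii) it is essentially identical to the paper's argument. For part (i), the paper's proof is a single sentence deferring to ``the theory discussed in Remark~\ref{rem:U}'' (the polar-space/isotropic-subspace poset structure), whereas you unpack this into its components: containment of $x$, the diameter bound and its attainment via Witt extension, and weak-geodetic closure via the classification of strongly closed subgraphs. So the underlying approach is the same---both rely on standard polar-space structure theory---but you make explicit what the paper leaves implicit.

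One small caution on your Route~1 for closure: the identification $\Omega = (y\cap z)^\vee$ is itself the content of ``$\eta^\vee$ is weak-geodetically closed'' applied to $\eta = y\cap z$, so to avoid circularity you must genuinely be citing the external classification (as you do) rather than the present lemma. Your honest acknowledgement that the self-contained Route~2 leaves a codimension-one case requiring the polar structure is accurate; this is precisely why both you and the paper ultimately appeal to the ambient theory rather than pure linear algebra.
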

\begin{proof} (i) This is a routine consequence of the theory discussed in Remark \ref{rem:U}. \\
\noindent (ii) First we check the inclusion $\subseteq$. Pick $y \in \eta^\vee \cap \Gamma_i(x)$. We have $y \in X$ and $\eta \subseteq y$ and $\partial(x,y)=i$.
We have $\eta \subseteq x$ since $\eta \in \mathcal P$, so $\eta \subseteq x \cap y$. Both $\eta$ and $x \cap y$ have dimension $D-i$, 
so $\eta = x \cap y$.  \\
\noindent Next we check the inclusion $\supseteq$. Pick $y \in X$ such that $\eta = x \cap y$. We have $\eta \subseteq y$, so $y \in \eta^\vee$.
We have ${\rm dim}\, x \cap y = {\rm dim}\,\eta = D-i$, so $\partial (x,y) = i$. By these comments, $y \in \eta^\vee \cap \Gamma_i(x)$.
\end{proof}

\begin{lemma} \label{lem:WGCsame} Let $0 \leq i \leq D$ and $y \in \Gamma_i(x)$. Then the  following are the same:
\begin{enumerate}
\item[\rm (i)] the unique weak-geodetically closed subgraph of $\Gamma$ that has diameter $i$ and contains $x$ and $y$;
\item[\rm (ii)]  the subgraph $\eta^\vee$, where $\eta = x \cap y$.
\end{enumerate}
\end{lemma}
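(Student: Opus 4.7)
The plan is to deduce this directly from the uniqueness statement in Lemma \ref{lem:WGCfind}, together with the structural information about $\eta^\vee$ already established in Lemma \ref{lem:etaV}(i).

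First I would verify that $\eta^\vee$ is a weak-geodetically closed subgraph of $\Gamma$ of diameter $i$ containing both $x$ and $y$. By Lemma \ref{lem:etaV}(i), $\eta^\vee$ is weak-geodetically closed, contains $x$, and has diameter $D - \dim \eta$. Since $y \in \Gamma_i(x)$, we have $\dim(x \cap y) = D - i$, so $\dim \eta = D - i$ and the diameter of $\eta^\vee$ is exactly $i$. Membership of $y$ in $\eta^\vee$ is immediate from Definition \ref{def:etaV}: $\eta = x \cap y \subseteq y$, so $y \in \eta^\vee$. Likewise $x \in \eta^\vee$ because $\eta \subseteq x$.

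Now Lemma \ref{lem:WGCfind} asserts that there exists a \emph{unique} weak-geodetically closed subgraph of $\Gamma$ of diameter $i$ containing $x$ and $y$. Since the subgraph in (i) and the subgraph $\eta^\vee$ in (ii) both satisfy this characterization, they must coincide.

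I expect no real obstacle here; the content is entirely packaged by the earlier lemmas, and the only thing to check is the routine dimension count $\dim(x \cap y) = D - \partial(x,y)$, which is part of the definition of the dual polar graph recalled in Example \ref{ex:dp}.
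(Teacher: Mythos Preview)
Your proof is correct and follows essentially the same approach as the paper: verify that $\eta^\vee$ is a weak-geodetically closed subgraph of diameter $i$ containing both $x$ and $y$ (using Lemma~\ref{lem:etaV}(i) and the dimension count $\dim(x\cap y)=D-i$), then invoke the uniqueness from Lemma~\ref{lem:WGCfind}. The paper's version is slightly terser, leaving the uniqueness appeal implicit in the phrase ``it suffices to show,'' but the argument is the same.
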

\begin{proof}  It suffices to show that $\eta^\vee$ is a weak-geodetically closed subgraph of $\Gamma$ that has diameter $i$ and contains $x$ and $y$.
We have 
\begin{align*}
{\rm dim}\, \eta = {\rm dim}\, x \cap y = D - \partial(x,y) = D-i.
\end{align*}
By Lemma \ref{lem:etaV}(i), $\eta^\vee$ is a weak-geodetically closed subgraph of $\Gamma$ that contains $x$ and has diameter $i$. By construction,
$\eta \in y$ so $y \in \eta^\vee$.
\end{proof}

\begin{lemma} \label{lem:simE} For $y, z \in X$ the following are equivalent:
\begin{enumerate}
\item[\rm (i)] $y\sim z$;
\item[\rm (ii)] $x \cap y = x \cap z$.
\end{enumerate}
\end{lemma}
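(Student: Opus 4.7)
The plan is to unwind both conditions of the equivalence relation $\sim$ into statements about $x \cap y$ and $x \cap z$ using the machinery already established: the dual polar identity $\partial(x,y) = D - \dim(x \cap y)$, Lemma \ref{lem:NS2} (connected components of $\Gamma_i(x)$ correspond bijectively to weak-geodetically closed subgraphs of diameter $i$ containing $x$), Lemma \ref{lem:WGCsame} (this subgraph, when it contains $y$ at distance $i$ from $x$, is exactly $(x\cap y)^\vee$), and Lemma \ref{lem:etaV}(ii) (which characterizes $\eta^\vee \cap \Gamma_i(x)$ as the set of $\xi$ with $x \cap \xi = \eta$).

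For the direction (ii) $\Rightarrow$ (i), I would start by setting $\eta = x \cap y = x \cap z$. Because the dual polar distance satisfies $\partial(x,\cdot) = D - \dim(x \cap \cdot)$, the equal intersections immediately force $\partial(x,y) = \partial(x,z)$; call this common value $i$. By Lemma \ref{lem:etaV}(ii), both $y$ and $z$ lie in $\eta^\vee \cap \Gamma_i(x)$. Since $\eta^\vee$ is a weak-geodetically closed subgraph of diameter $i$ containing $x$ (Lemma \ref{lem:etaV}(i)), Lemma \ref{lem:NS2} tells us that $\eta^\vee \cap \Gamma_i(x)$ is a single connected component of $\Gamma_i(x)$. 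So $y$ and $z$ lie in the same component, and $y \sim z$.

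For the direction (i) $\Rightarrow$ (ii), I would assume $y \sim z$ and let $i = \partial(x,y) = \partial(x,z)$. By Lemma \ref{lem:NS2}, the common connected component of $\Gamma_i(x)$ containing $y$ and $z$ has the form $\Omega \cap \Gamma_i(x)$ for a unique weak-geodetically closed subgraph $\Omega$ of diameter $i$ containing $x$. Since $y \in \Omega$ and $\partial(x,y) = i$, Lemma \ref{lem:WGCsame} identifies $\Omega$ as $(x \cap y)^\vee$. Then $z \in (x \cap y)^\vee \cap \Gamma_i(x)$, and Lemma \ref{lem:etaV}(ii) applied to $\eta = x \cap y$ yields $x \cap z = x \cap y$.

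There is really no serious obstacle; the lemmas already in place do all the heavy lifting, so the proof is essentially a matter of wiring them together. If anything, the only point requiring care is to notice that one can use Lemma \ref{lem:etaV}(ii) to extract the identity $x \cap y = x \cap z$ from the coincidence $z \in (x \cap y)^\vee \cap \Gamma_i(x)$, rather than taking a detour through showing $(x \cap y)^\vee = (x \cap z)^\vee$ and then recovering equality of the intersections.
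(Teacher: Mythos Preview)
Your proof is correct and follows essentially the same route as the paper: both directions invoke Lemma~\ref{lem:NS2}, Lemma~\ref{lem:WGCsame}, and Lemma~\ref{lem:etaV} in the same way, with only a minor streamlining in your $(\mathrm{i})\Rightarrow(\mathrm{ii})$ direction (you apply Lemma~\ref{lem:WGCsame} once, to $y$, and read off $x\cap z = x\cap y$ directly from Lemma~\ref{lem:etaV}(ii), whereas the paper first identifies $\Omega$ with both $(x\cap y)^\vee$ and $(x\cap z)^\vee$).
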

\begin{proof} ${\rm (i)} \Rightarrow {\rm (ii)}$ By Definition \ref{def:sim}, there exists an integer $i$ $(0 \leq i \leq D)$ and a 
 connected component $\Delta$ of $\Gamma_i(x)$, such that $y,z \in \Delta$.
 By Lemma \ref{lem:NS2}, there exists a unique weak-geodetically closed subgraph $\Omega$ of $\Gamma$ that has diameter $i$ and contains $x$
 such that $\Delta = \Omega \cap \Gamma_i(x)$. By construction $y,z \in \Omega\cap \Gamma_i(x)$. 
 By Lemma \ref{lem:WGCsame}, 
 \begin{align*}
 (x \cap y)^\vee = \Omega = (x \cap z)^\vee.
 \end{align*}
 We may now argue using Lemma \ref{lem:etaV}, 
 \begin{align*}
 y \in \Omega \cap \Gamma_i(x) = (x \cap z)^\vee \cap \Gamma_i(x) = \lbrace \xi \in X \vert x \cap z = x \cap \xi \rbrace.
 \end{align*}
Therefore $x \cap y = x \cap z$. \\
\noindent  ${\rm (ii)} \Rightarrow {\rm (i)}$ Define $\eta = x \cap y = x \cap z$. We have
\begin{align*}
\partial(x,y)= D - {\rm dim}\, \eta = \partial(x,z).
\end{align*}
Define $i = \partial(x,y)=\partial(x,z)$ and $\Omega = \eta^\vee$.
By Lemma \ref{lem:etaV}(i), $\Omega$ is a weak-geodetically closed subgraph of $\Gamma$ that contains $x$ and has diameter $i$.
By construction
$y, z \in \Omega \cap \Gamma_i(x)$.
By Lemma \ref{lem:NS2}, the vertices $y,z$ are  in the same connected component of $\Gamma_i(x)$.  Now $y \sim z$ by Definition \ref{def:sim}.
\end{proof}

\noindent Recall the standard module $V$ of $\Gamma$. 

\begin{definition}\label{def:etaN} \rm For $\eta \in \mathcal P$ we define a vector $\eta^\mathcal N \in V$ as follows:
\begin{align*}
\eta^\mathcal N = \sum_{\stackrel{y \in X}{x \cap y=\eta}} \hat y.
\end{align*}
\end{definition}
\noindent By Lemma \ref{lem:simE} and the construction, the vector $\eta^\mathcal N$ 
in Definition \ref{def:etaN} is the characteristic vector of the $\sim$ equivalence class that corresponds to the weak-geodetically closed
subgraph $\eta^\vee$, via the bijection in Lemma \ref{lem:NS2}.

\begin{theorem}\rm \label{thm:bij} We give a bijection from $\mathcal P$ to the 
basis for $\mathcal N$ in Theorem \ref{thm:DPN}. The bijection sends $\eta \to \eta^{\mathcal N}$ for all $\eta \in \mathcal P$.
\end{theorem}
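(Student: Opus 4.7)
The plan is to realize $\eta \mapsto \eta^{\mathcal N}$ as a composition of bijections already present in the preceding sections. First, by Theorem \ref{thm:DPN} together with Lemma \ref{lem:simMeaning}, the basis for $\mathcal N$ in question is indexed by the disjoint union, over $0 \leq i \leq D$, of the sets of connected components of the subgraphs $\Gamma_i(x)$. Second, by Lemma \ref{lem:NS2} the connected components of $\Gamma_i(x)$ are in bijection, via $\Omega \mapsto \Omega \cap \Gamma_i(x)$, with the weak-geodetically closed subgraphs of $\Gamma$ that contain $x$ and have diameter $i$. Running over $i$, this produces a bijection between the basis in Theorem \ref{thm:DPN} and the collection of all weak-geodetically closed subgraphs of $\Gamma$ that contain $x$.

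Next I will show that $\eta \mapsto \eta^\vee$ is a bijection from $\mathcal P$ onto the set of weak-geodetically closed subgraphs of $\Gamma$ that contain $x$. Surjectivity: given such an $\Omega$ of diameter $i$, pick any $y \in \Omega$ with $\partial(x,y) = i$; then by Lemma \ref{lem:WGCsame} we have $\Omega = \eta^\vee$ with $\eta = x \cap y \in \mathcal P$. Injectivity: suppose $\eta_1^\vee = \eta_2^\vee$. By Lemma \ref{lem:etaV}(i) the diameters $D - \dim \eta_1$ and $D - \dim \eta_2$ agree, so $\dim \eta_1 = \dim \eta_2 = D-i$; choosing $y$ in this common set with $\partial(x,y) = i$ and applying Lemma \ref{lem:etaV}(ii) forces $\eta_1 = x \cap y = \eta_2$.

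It remains to identify the composed bijection with the assignment $\eta \mapsto \eta^{\mathcal N}$. Given $\eta \in \mathcal P$ of dimension $D-i$, the basis element attached to $\eta$ under the chain of bijections above is the characteristic vector of the set $\eta^\vee \cap \Gamma_i(x)$. By Lemma \ref{lem:etaV}(ii), this set equals $\{y \in X : x \cap y = \eta\}$, whose characteristic vector is $\eta^{\mathcal N}$ by Definition \ref{def:etaN}. This matches the remark the author inserted immediately after Definition \ref{def:etaN}, and completes the proof.

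There is no serious obstacle, since all the work has been done in the preceding lemmas; the mildly delicate point is the injectivity of $\eta \mapsto \eta^\vee$. As a sanity check, one can also see bijectivity from a cardinality count: $\vert \mathcal P \vert = \sum_{i=0}^D \binom{D}{i}_q$ by Lemma \ref{lem:SZ}, while by Corollary \ref{cor:Ndimf} and Corollary \ref{cor:Ndim} this same sum equals $\dim \mathcal N$ and hence the number of $\sim$ equivalence classes; so an injective (or, equally, a surjective) map between these two finite sets of equal cardinality is automatically a bijection.
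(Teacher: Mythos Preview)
Your proof is correct and follows essentially the same route as the paper: the paper's proof simply cites ``the comments below Definition~\ref{def:etaN}'', which already package the chain $\eta \mapsto \eta^\vee \mapsto \eta^\vee \cap \Gamma_i(x) \mapsto \eta^{\mathcal N}$ via Lemma~\ref{lem:NS2} and Lemma~\ref{lem:etaV}(ii). You have unpacked that chain explicitly and added the injectivity argument for $\eta \mapsto \eta^\vee$ that the paper leaves implicit; the cardinality sanity check at the end is a nice touch but not needed.
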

\begin{proof}  The basis for  $\mathcal N$ in  Theorem \ref{thm:DPN}  consists of the characteristic vectors of the $\sim $ equivalence classes.
The set $\lbrace \eta^\mathcal N \vert \eta \in \mathcal P\rbrace$ consists of 
these characteristic vectors, by the comments below
Definition \ref{def:etaN}. The result follows.
\end{proof}

\noindent We now bring in the adjacency matrix $A$ of $\Gamma$, and the dual adjacency matrix $A^*=A^*(x)$ of $\Gamma$ with respect to $x$.

\begin{theorem}\label{thm:Pmain} We give the action of $A, A^*$  on the basis 
 $\lbrace \eta^\mathcal N \vert \eta \in \mathcal P\rbrace$ for $\mathcal N$.
For $0 \leq i \leq D$ and  $\eta \in \mathcal P_i$ we have
\begin{align*} 
A \eta^\mathcal N &= a_1 \frac{q^i-1}{q-1} \eta^\mathcal N + \sum_{\zeta \in \mathcal P(\eta) \cap \mathcal P_{i+1}} \zeta^\mathcal N +
(a_1+1)q^{i-1} \sum_{\zeta \in \mathcal P(\eta) \cap \mathcal P_{i-1}} \zeta^\mathcal N; \\
A^* \eta^\mathcal N &= \theta^*_i \eta^\mathcal N. 
\end{align*}
\end{theorem}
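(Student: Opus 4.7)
The $A^*$-action is almost immediate: by Definition \ref{def:etaN} the vector $\eta^{\mathcal N}$ is supported on those $y \in X$ with $x \cap y = \eta$, and every such $y$ lies at distance $D - \dim \eta = i$ from $x$. Hence $\eta^{\mathcal N} \in E^*_i V$, on which $A^*$ acts as $\theta^*_i$, so $A^* \eta^{\mathcal N} = \theta^*_i \eta^{\mathcal N}$.

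For the $A$-action, write $\Delta_\eta := \{y \in X : x \cap y = \eta\}$ so that $\eta^{\mathcal N}$ is the characteristic vector of $\Delta_\eta$. For each $z \in X$ we have $(A \eta^{\mathcal N})_z = |\Gamma(z) \cap \Delta_\eta|$. Since a single edge changes the distance to $x$ by at most one, this count vanishes unless $\partial(x,z) \in \{i-1, i, i+1\}$. The equitability of the $\sim$-partition (Lemmas \ref{lem:equit1}, \ref{lem:equit2}) then guarantees that the count depends only on the $\sim$-class of $z$, and takes the following values: $a_i = a_1 (q^i{-}1)/(q{-}1)$ (Lemma \ref{lem:inter}) when $z \in \Delta_\eta$; the value $(a_1+1)q^{i-1}$ when $z$ lies in a $\sim$-class contained in $\Gamma_{i-1}(x)$ that has at least one edge to $\Delta_\eta$; and the value $1$ when $z$ lies in a $\sim$-class contained in $\Gamma_{i+1}(x)$ with at least one edge to $\Delta_\eta$.

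The principal remaining task is to identify exactly which $\sim$-classes $\Delta_\zeta$ in $\Gamma_{i\mp 1}(x)$ are joined by edges to $\Delta_\eta$ and to show that they are in bijection with the cover neighbors $\mathcal P(\eta) \cap \mathcal P_{i\mp 1}$. For the $\Gamma_{i-1}$ case: given $y \in \Delta_\eta$ and $z \in \Gamma(y) \cap \Gamma_{i-1}(x)$, set $\zeta := x \cap z \in \mathcal P_{i-1}$. Because $\partial(x,z) + \partial(z,y) = i = \partial(x,y)$ and $\eta^\vee$ is weak-geodetically closed (Lemma \ref{lem:etaV}(i)) and contains $x$ and $y$, Definition \ref{def:WGC} forces $z \in \eta^\vee$, so $\eta \subseteq z$ and $\eta \subseteq \zeta$, meaning $\zeta$ covers $\eta$. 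Conversely, for $\zeta \in \mathcal P(\eta) \cap \mathcal P_{i-1}$ we have $\zeta^\vee \subseteq \eta^\vee$, and any $z \in \Delta_\zeta = \zeta^\vee \cap \Gamma_{i-1}(x)$ is a vertex of $\eta^\vee$ at distance $i-1$ from $x$; applying Lemma \ref{lem:WGC} to $\eta^\vee$, the positivity of $b_{i-1}(\eta^\vee) = (a_1+1) q^{i-1}$ shows $z$ has a neighbor in $\eta^\vee \cap \Gamma_i(x) = \Delta_\eta$ (using Lemma \ref{lem:etaV}(ii)). For the $\Gamma_{i+1}$ case: given $y \in \Delta_\eta$ and $w \in \Gamma(y) \cap \Gamma_{i+1}(x)$, set $\xi := x \cap w \in \mathcal P_{i+1}$. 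The modular dimension formula inside $y$ yields $\dim(x \cap y \cap w) \geq \dim(x \cap y) + \dim(y \cap w) - \dim y = D - i - 1$, while $x \cap y \cap w \subseteq \xi$ has dimension exactly $D-i-1$; so equality holds, $\xi \subseteq y$, and $\xi \subseteq x \cap y = \eta$, meaning $\eta$ covers $\xi$. The converse is symmetric: $\xi^\vee \supseteq \eta^\vee$ and $b_i(\xi^\vee) = (a_1+1)q^i > 0$ produces edges from $\Delta_\eta$ into $\Delta_\xi$.

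Assembling these two bijective matchings with the equitable coefficients of Lemmas \ref{lem:equit1}, \ref{lem:equit2}, and using that the $\sim$-class characteristic vectors are $\{\zeta^{\mathcal N} : \zeta \in \mathcal P\}$ (Theorems \ref{thm:DPN}, \ref{thm:bij}), yields exactly the claimed expansion of $A \eta^{\mathcal N}$. The main obstacle is the identification of adjacent $\sim$-classes with the cover relations of $\mathcal P$; once that is settled via the weak-geodetic closure argument and the brief dimension count above, the remainder is bookkeeping of the equitable-partition constants.
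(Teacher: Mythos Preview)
Your proof is correct and follows the same overall route as the paper: the $A^*$-action comes from $\eta^{\mathcal N} \in E^*_iV$, and the $A$-action from the equitable-partition coefficients of Lemmas \ref{lem:equit1} and \ref{lem:equit2}. The paper's own proof simply cites those two lemmas for the $A$-action and says nothing further; you have supplied the piece the paper leaves implicit, namely the identification of the $\sim$-classes adjacent to $\Delta_\eta$ with the cover neighbors $\mathcal P(\eta)\cap\mathcal P_{i\pm 1}$, via the weak-geodetic closure argument for the $i{-}1$ direction and the modular dimension count inside $y$ for the $i{+}1$ direction. So your argument is the paper's argument with the missing link written out in full.
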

\begin{proof} 
The $A$ action 
follows from  Lemmas \ref{lem:equit1}, \ref{lem:equit2}. The $A^*$ action  
follows from
\begin{align*}
\eta^{\mathcal N} \in 
{\rm Span} \lbrace {\hat y} \vert y \in X, \;  x \cap y=\eta \rbrace 
\subseteq {\rm Span} \lbrace {\hat y} \vert y \in \Gamma_i(x) \rbrace =          E^*_iV.
\end{align*}
\end{proof}

\noindent We finish this section with some remarks.  By Theorem \ref{thm:Pmain},  the action of $A$ on $\mathcal N$ becomes a weighted adjacency map for $L_D(q)$.
This weighted adjacency map is close to the one in \cite[Definition~7.1]{LNq2}.
Indeed,  $A + \frac{a_1}{q-1}I$ corresponds to the weighted adjacency map  in \cite[Definition~7.1]{LNq2}, where the parameter $\varphi $ from
\cite[Definition~7.1]{LNq2} satisfies $\varphi=a_1+1$. 
\medskip

\noindent We just mentioned some weighted adjacency maps for $L_D(q)$.
A similar weighted adjacency map for $L_D(q)$ showed up earlier in the work of 
Bernard, Cramp{\'e}, and Vinet
\cite[Theorem~7.1]{PAB} concerning the symplectic dual polar graph $C_D(q)$ with $q$ a prime  (see Example \ref{ex:dp}).
The approach in \cite{PAB} is quite different from ours.

\section{Directions for future research}

In this section, we give some open problems concerning the nucleus.
\medskip

\noindent
 Throughout this section,
let $\Gamma=(X, \mathcal R)$ denote a $Q$-polynomial distance-regular graph. Recall the standard module $V$. Fix $x \in X$ and write $T=T(x)$.
Recall the nucleus $\mathcal N=\mathcal N(x)$.

\begin{problem}\rm Compute $\mathcal N$ under the assumption that $\Gamma$ belongs to a known infinite family of $Q$-polynomial distance-regular
graphs with unbounded diameter; see \cite[Section~6.4]{Nbbit}, \cite[Section~3.6]{Nbannai}, \cite[Section~8]{Nbcn}.
\end{problem}

\begin{problem} \rm Determine how the algebraic structure of $\mathcal N$ depends on the choice of $x$.
\end{problem}

\begin{problem} \rm A vector in $V$ is called $0/1$ whenever every entry  is $0$ or $1$.
For the examples of $\Gamma$  in Sections 8, 9 the subspace  $\mathcal N$ has an orthogonal basis consisting of some $0/1$ vectors in $V$.
 Perhaps $\mathcal N$ has this type of basis in all cases.
It would be interesting to explore this issue.
\end{problem}

\begin{problem} \rm By \cite[Lemma~5.4]{tSub3}, the adjacency matrix $A$ and dual adjacency matrix $A^*=A^*(x)$ satisfy a pair of relations called the tridiagonal relations.
Perhaps the restrictions of $A, A^*$ to $\mathcal N$ satisfy some additional relations. 
It would be interesting to explore this issue.
\end{problem}


\begin{problem}\rm For a nonempty subset $C \subseteq X$, consider the characteristic vector $\widehat C = \sum_{y \in C} {\hat y}$. Determine those $C$ for which
$\widehat C \in \mathcal N(y)$ for all $y \in C$. This happens if $C$ is a descendant in the sense of \cite[Section~2]{tanaka2}. The articles \cite{wdw, tanaka1} might be relevant for
this problem.
\end{problem}

\begin{problem} \rm Determine those  $\Gamma$ for which $\cap_{y \in X} \mathcal N(y)$ is nonzero. This happens
if  $\Gamma$ is a bipartite antipodal 2-cover (see Examples \ref{ex:cube}, \ref{ex:2Hom}). In this case $\cap_{y \in X} \mathcal N(y)=V$.
\end{problem}

\section{Acknowledgement} 
\noindent 
The author thanks Pierre-Antoine Bernard for a discussion about how a weighted adjacency
matrix for $L_D(q)$ shows up in the work \cite{PAB} concerning the symplectic dual polar graphs.
The author thanks Hajime Tanaka for some discussions about the descendants of a $Q$-polynomial distance-regular graph.
 These conversations helped to motivate the present paper.
The author thanks Pierre-Antoine Bernard, Allen Herman, Kazumasa Nomura, and Arjana  \v Zitnik for reading the manuscript carefully, and sending valuable comments.


\bigskip


\noindent Paul Terwilliger \hfil\break
\noindent Department of Mathematics \hfil\break
\noindent University of Wisconsin \hfil\break
\noindent 480 Lincoln Drive \hfil\break
\noindent Madison, WI 53706-1388 USA \hfil\break
\noindent email: {\tt terwilli@math.wisc.edu }\hfil\break

\section{Statements and Declarations}

\noindent {\bf Funding}: The author declares that no funds, grants, or other support were received during the preparation of this manuscript.
\medskip

\noindent  {\bf Competing interests}:  The author  has no relevant financial or non-financial interests to disclose.
\medskip

\noindent {\bf Data availability}: All data generated or analyzed during this study are included in this published article.

\end{document}